\documentclass{article}

\PassOptionsToPackage{numbers, compress}{natbib}

\usepackage[utf8]{inputenc} %
\usepackage[T1]{fontenc}    %
\usepackage[hidelinks]{hyperref}       %
\usepackage{url}            %
\usepackage{booktabs}       %
\usepackage{amsfonts}       %
\usepackage{nicefrac}       %
\usepackage{microtype}      %
\usepackage{cite}

\usepackage{amsmath, amsthm, amssymb}
\usepackage{mathtools}
\usepackage[parfill]{parskip}
\usepackage{soul}
\usepackage{multirow}
\usepackage{caption}
\usepackage{subcaption}

\usepackage{epsfig,fullpage,psfrag,url,verbatim}
\usepackage{graphics}
\usepackage{graphicx}
\usepackage{epstopdf}

\usepackage{amssymb}
\usepackage{amsthm}
\usepackage{url}
\usepackage[ruled,linesnumbered]{algorithm2e}
\usepackage{enumitem}
\usepackage{booktabs}
\usepackage{mathtools}
\usepackage{fullpage}
\usepackage{siunitx}

\newcommand{\R}{{\mathbb R}}

\newcommand{\N}{{\mathbb N}}

\newcommand{\eye}{\mbox{\bf I}}

\newcommand{\dist}{\mathop{\bf dist{}}}
\newcommand{\diam}{\mathop{\bf diam{}}}
\newcommand{\argmin}{\mathop{\rm argmin}}

\newcommand{\argmax}{\mathop{\rm argmax}}

\newcommand{\zBall}[2]{W_{#1}(#2)}

\newcommand{\ind}[1]{_{#1}}

\newcommand{\tz}{\hat{z}}
\newcommand{\ty}{\hat{y}}
\newcommand{\tx}{\hat{x}}
\newcommand{\hz}{\tilde{z}}
\newcommand{\hy}{\tilde{y}}
\newcommand{\hx}{\tilde{x}}
\newcommand{\bz}{\bar{z}}
\newcommand{\bx}{\bar{x}}
\newcommand{\by}{\bar{y}}
\newcommand{\pran}[1]{\left(#1\right)}
\newcommand{\bracket}[1]{\left\{ #1 \right\}}
\newcommand{\defeq}{\vcentcolon=}

\newcommand{\Span}[1]{\text{\emph{Span}}\left\{#1\right\}}

\usepackage{nicefrac}
\newcommand{\vectorr}[2]{\begin{pmatrix}  #1 \\ #2 \end{pmatrix}}
\newcommand{\twomatrix}[4]{\begin{pmatrix}  #1 & #2\\#3  & #4 \end{pmatrix}}

\usepackage{physics}

\newtheorem{theorem}{Theorem}
\newtheorem*{theorem*}{Theorem}
\newtheorem{lemma}{Lemma}
\newtheorem{fact}{Fact}
\newtheorem{definition}{Definition}

\newtheorem{remark}{Remark}
\newtheorem{proposition}{Proposition}

\newtheorem{corollary}{Corollary}

\newtheorem{property}{Property}

\usepackage{todonotes}

\SetKwFunction{GenericRestartScheme}{GenericRestartScheme}

\SetKwFunction{GenericFixedFrequencyRestartScheme}{GenericFixedFrequencyRestartScheme}

\SetKwFunction{GenericIterativeAlgorithm}{GenericIterativeAlgorithm}

\newcommand{\callGenericIterativeAlgorithm}[1]{\hyperref[function:GenericIterativeAlgorithm]{\GenericIterativeAlgorithm{#1}}}

\SetKwFunction{OneStepOfAlgorithm}{OneStepOfAlgorithm}
\SetKwFunction{InitializeAlgorithm}{InitializeAlgorithm}

\SetKwFunction{OneStepOfPDHG}{OneStepOfPDHG}
\SetKwFunction{InitializePDHG}{InitializePDHG}
\SetKwFunction{AdaptiveRestartPDHG}{AdaptiveRestartPDHG}

\SetKwFunction{OneStepOfAGD}{OneStepOfAGD}
\SetKwFunction{InitializeAGD}{InitializeAGD}
\SetKwFunction{AdaptiveRestartAGD}{AdaptiveRestartAGD}

\SetKwFunction{OneStepOfExtragradient}{OneStepOfExtragradient}
\SetKwFunction{InitializeExtragradient}{InitializeExtragradient}
\SetKwFunction{AdaptiveRestartExtragradient}{AdaptiveRestartExtragradient}
\newcommand{\PrimalDualStep}{\text{PrimalDualStep}}

\SetKwProg{Fn}{Function}{:}{}

\newcommand{\InitialTau}[1]{\tau_{\text{init}}}

\newcommand{\IterateSet}[0]{S}

\DeclarePairedDelimiter{\ceil}{\lceil}{\rceil}

\newcommand{\zind}[1]{\bar{z}^{#1}}

\newcommand{\len}[1]{\tau^{#1}}
\newcommand{\zStar}{z^{\star}}
\newcommand{\ZStar}{Z^{\star}}
\newcommand{\XStar}{X^{\star}}
\newcommand{\YStar}{Y^{\star}}
\newcommand{\xStar}{x^{\star}}
\newcommand{\yStar}{y^{\star}}
\newcommand{\fStar}{f^{\star}}
\newcommand{\tStar}{t^{\star}}
\newcommand{\tHatStar}{\hat{t}^{\star}}
\newcommand{\ReducePotentialBy}[0]{\beta}
\newcommand{\PEV}{\Gamma}
\newcommand{\Pev}{\gamma}
\newcommand{\sv}{\sigma}

\newcommand{\ii}{\mathbf{i}}

\newcommand{\Mat}[0]{A}
\newcommand{\U}[0]{U}
\newcommand{\V}[0]{V}

\newcommand{\Lip}[0]{L}
\newcommand{\Lag}[0]{\mathcal{L}}
\newcommand{\T}{\top}
\newcommand{\CT}{\dagger}

\newcommand{\sigmaMin}[1]{{\sigma_{\min}^+}{(#1)}}
\newcommand{\sigmaMax}[1]{{\sigma_{\max}}{(#1)}}

\newcommand{\blue}[1]{{#1}}

\usepackage{accents}

\begin{document}
\title{Faster First-Order Primal-Dual Methods for Linear Programming using Restarts and Sharpness}
\author{
David Applegate\thanks{Google Research (email: dapplegate@google.com)}
,
Oliver Hinder\thanks{Google Research, University of Pittsburgh (email: ohinder@pitt.edu)}
,
Haihao Lu\thanks{University of Chicago Booth School of Business (email: haihao.lu@chicagobooth.edu)}
,
Miles Lubin\thanks{Google Research}
}
\date{}
\maketitle

\begin{abstract}

First-order primal-dual methods are appealing for their low memory overhead, fast iterations, and effective parallelization. However, they are often slow at finding high accuracy solutions, which creates a barrier to their use in traditional linear programming (LP) applications. This paper exploits the sharpness of primal-dual formulations of LP instances to achieve linear convergence using restarts in a general setting that applies to ADMM (alternating direction method of multipliers), PDHG (primal-dual hybrid gradient method) and EGM (extragradient method). 
In the special case of PDHG, without restarts we show an iteration count lower bound of $\Omega(\kappa^2 \log(1/\epsilon))$, while with restarts we show an iteration count upper bound of $O(\kappa \log(1/\epsilon))$, where $\kappa$ is a condition number and $\epsilon$ is the desired accuracy. Moreover, the upper bound is optimal for a wide class of primal-dual methods, and applies to the strictly more general class of sharp primal-dual problems. We develop an adaptive restart scheme and verify that restarts significantly improve the ability of PDHG\blue{, EGM, and ADMM} to find high accuracy solutions to LP problems.
\end{abstract}

\vspace{0.2cm}

\section{Introduction}

Linear programming (LP) is a fundamental tool in operations research, with applications in transportation, scheduling, inventory control, revenue management \cite{charnes1954stepping, hanssmann1960linear, bowman1956production,manne1960linear,anderson2000hotel,liu2008choice}. 
An LP problem with $n$ variables and $m$ constraints in standard form is \begin{align*}
\begin{split}
    \min_{x\in\R^n} &~ c^{\T} x\\
    \text{subject to:} &~ \Mat x= b \\
    &~ x \ge 0 \ ,
\end{split}
\end{align*}
where $\R$ denotes the set of real numbers.

Currently, most practitioners solve LP problems with either simplex~\cite{dantzig1998linear} or interior-point methods~\cite{karmarkar1984new}. There are excellent reasons for this, namely that both methods typically find highly accurate solutions in a reasonable amount of time, and reliable implementations are widely available. While the two methods are quite different, both exactly solve linear system subproblems using factorizations (i.e., matrix inversion). These factorizations can be extremely fast, even for large problems. However, there are some drawbacks of relying on methods that use factorizations. Factorization speed heavily depends on the sparsity pattern of the linear systems, and in some examples, can be very slow. Moreover, for large enough problems, factorizations will run out of memory, even when the original problem data can fit into memory. Finally, factorizations are challenging to parallelize or distribute across multiple machines.

For this reason, there is a recent interest in developing methods for LP that forgo factorizations and instead use matrix-vector multiplies~\cite{ipm25gondzio2012,pmlr-v119-basu20a,applegate2021practical,li2020asymptotically,lin2021admm,basu2020eclipse}. Methods built on matrix-vector multiplication have many computationally appealing aspects. For example, they are readily parallelized or distributed across multiple machines. Moreover, their memory footprint is almost the same as the original problem data, and each matrix-vector multiply is reliably fast. Because of these fundamental differences, Nesterov~\cite{Nesterov2014huge} categorizes optimization methods that use factorizations as capable of handling \textit{medium-scale} problems, and those that instead use matrix-vector multiplies as capable of handling \textit{large-scale} problems.

A major reason why methods based on matrix-vector multiplies remain unsuitable replacements for factorization-based methods for LP is that they often struggle to obtain moderate to high accuracy solutions in a reasonable time-frame~\cite{ipm25gondzio2012}. This paper attempts to tackle this issue in an important setting: first-order primal-dual methods for LP. Primal-dual methods solve:
\begin{equation}\label{eq:poi-primal-dual}
    \min_{x\in X}\max_{y\in Y} \Lag(x,y) \ ,
\end{equation}
where $\Lag(x,y)$ is differentiable, convex in $x$ and concave in $y$, and $X,Y$ are closed convex sets. For notational convenience, we denote $z=(x,y)$ as the primal-dual variables and $Z=X\times Y$ as the feasible region.  Furthermore, denote 
$$F(z)= \begin{pmatrix}
\grad_x \Lag(x,y) \\
-\grad_y \Lag(x,y)
\end{pmatrix}$$
as the first-order derivatives of $\Lag(x,y)$. \blue{Throughout the paper, we assume the feasible region of \eqref{eq:poi-primal-dual} is non-empty and that \eqref{eq:poi-primal-dual} has a bounded optimal value, which are standard assumptions in the convex optimization literature~\cite{nesterov2013introductory}\footnote{\blue{A robust algorithm for LP would need to detect violations of this assumption, i.e., when the problem is infeasible or unbounded. 
We refer readers to  \cite{applegate2021infeasibility} to understand the behavior of primal-dual methods when applied to infeasible or unbounded LP problems.
}.}}.   Well-known primal-dual methods include proximal point method (PPM)~\cite{rockafellar1976monotone}, extragradient method (EGM)~\cite{korpelevich1976extragradient,tseng1995linear}, primal-dual hybrid gradient (PDHG)~\cite{chambolle2011first}, and alternating direction method of multipliers (ADMM)~\cite{douglas1956numerical}.
We can solve LP problems with primal-dual methods by writing them in the form:
\begin{align}\label{eq:LP-Lag}
    \min_{x\ge 0}\max_{y \in \R^{m}}  \Lag(x,y)= c^{\T} x + y^{\T} b-y^{\T} \Mat x \ .
\end{align}
To obtain high accuracy solutions for \eqref{eq:LP-Lag}, our key idea is to use restarts and sharpness. A restart feeds the output of an algorithm back in as input; a restart scheme will restart an algorithm whenever certain criteria are met. Sharpness presents a lower bound on the objective function growth in terms of the distance to the optimal solution set.
Restart schemes and sharpness conditions are well-studied in the context of unconstrained minimization \cite{o2015adaptive,roulet2020sharpness,yang2018rsg}.
In this context they can improve the theoretical and practical convergence of minimization algorithms to high accuracy solutions.
We analyze restarts in a different context: primal-dual methods for solving sharp primal-dual problems, such as those derived from LP, obtaining similar improvements.

\paragraph{Contributions} The contributions of the paper can be summarized as follow:
\begin{itemize}
\item We introduce a sharpness condition of primal-dual problems, and show LP problems are sharp.
\item We propose restarted algorithms for solving sharp primal-dual problems, and present their linear convergence rates. Their linear convergence rate matches the lower bound for a general class of primal-dual methods. 
\item We analyze primal-dual hybrid gradient without restarts and show that it is slower than its restarted variant on bilinear problems. 
\item We present an adaptive restart scheme that requires no hyperparameter searching over restart lengths. Numerical experiments showcase the effectiveness of our scheme.
\end{itemize}

\blue{While not a contribution of this paper, a subsequent computational study~\cite{applegate2021practical} evaluates a method named PDLP, which builds on top of the theory in the present paper by implementing a restarted version of PDHG with various additional enhancements. The study shows that adaptive restarts contribute significantly to PDLP's ability to solve a wide variety of benchmark LP problems to high levels of accuracy (see Section~C.2 in \cite{applegate2021practical}). \cite{applegate2021practical} includes an open source Julia prototype of PDLP\footnote{\url{https://github.com/google-research/FirstOrderLp.jl}}. More recently, a C++ version of PDLP was released within the open source OR-Tools library\footnote{\url{https://developers.google.com/optimization}}.}

There are two concepts we introduce to facilitate our implementation and analysis: the normalized duality gap and sharpness. 

\paragraph{Normalized duality gap.} The traditional metric for measuring solution quality for primal-dual problems is the primal-dual gap \cite{nemirovski2004prox,chambolle2011first} defined as 
\begin{flalign}\label{eq:primal-dual-gap}
\max_{\tz \in Z}\{ \Lag(x, \ty) - \Lag(\tx, y)\}
\end{flalign}
where for brevity we denote $(\tx, \ty) = \tz$. A solution to \eqref{eq:poi-primal-dual} has zero primal-dual gap. However, when the feasible region is unbounded, such as in LP \eqref{eq:LP-Lag}, this quantity can be infinite.
For this reason, for unbounded problems, we cannot provide convergence guarantees on this quantity.
This makes it a poor progress metric.
To overcome this issue we introduce the normalized duality gap:
\begin{subequations}\label{eq:rhorz}
\begin{flalign}\label{eq:rhorz-pos}
\rho_{r}(z) := \frac{\max_{\tz \in \zBall{r}{z}}\{ \Lag(x, \ty) - \Lag(\tx, y)\} }{r} \ ,
\end{flalign}
where $\zBall{r}{z} :=\{\tz \in Z \mid \|z-\tz\|\le r\}$ is the ball centered at $z$ with radius $r \in (0, \infty)$ intersected with the set $Z$, and $\| \cdot \|$ is a carefully selected semi-norm on $Z$. If $r = 0$ then for completeness we define\footnote{
\blue{The RHS of \eqref{eq:rhor-zero} is well-defined because lim sup always exists \cite[Section~5.3]{thomson2008elementary}. 
Later we show (Proposition~\ref{prop:basic-properties}) that $\rho_r(z)$ is monotonically
non-increasing in $r \in (0,\infty)$ for fixed $z$ which means that $\rho_{0}(z) = \limsup_{r \rightarrow 0^{+}} \rho_r(z) = \lim_{r \rightarrow 0^{+}} \rho_{r}(z) < \infty$.}
}:
\begin{flalign}\label{eq:rhor-zero}
\rho_0(z) \blue{\defeq} \limsup_{r \rightarrow 0^{+}} \rho_r(z).
\end{flalign}
\end{subequations}
Similar to primal-dual gap, we show the normalized duality gap attains zero if and only if it is evaluated at a solution to \eqref{eq:poi-primal-dual}. Unlike the primal-dual gap, if $\| \cdot \|$ is a norm, then the normalized duality gap $\rho_{r}(z)$ is always finite at any point $z \in Z$ with $r \in (0, \infty)$.

The normalized duality gap plays two roles in our work. The first is that it is used as a potential function in our restart scheme.
In particular, our adaptive restarts are triggered by constant factor reductions in the normalized duality gap.
Secondly, it forms the foundations of our analysis for restart schemes and is directly involved in our definition of sharpness for primal-dual problems.

\paragraph{Sharpness for primal-dual problems.} Sharpness, first introduced by Polyak~\cite{polyak1979sharp}, has been an important concept and a useful analytical tool in convex minimization. For a given function $f$, we say $f$ is $\alpha$-sharp if it holds for any $z\in Z$ that
\begin{equation}\label{eq:stand_sharp}
    f(z)-\fStar\ge \alpha \dist(z, \ZStar) \ ,
\end{equation}
where $\ZStar$ is the set of minima of the function $f$, $\fStar$ is the optimal function value, and $\dist(z, Z) := \inf_{\hat{z} \in Z} \| z - \hat{z} \|$.
When $f$ is Lipschitz continuous, convex and sharp, it is well-known that restarted sub-gradient descent obtains linear convergence. In contrast, without sharpness it obtains a slow sublinear rate~\cite{yang2018rsg}. In this work, we generalize sharpness from minimization problems to primal-dual problems \eqref{eq:poi-primal-dual}. Informally, we say a primal-dual problem is sharp if and only if the normalized duality gap $\rho_r(z)$ is sharp as per \eqref{eq:stand_sharp} with $f=\rho_r$. This condition is satisfied by LP problems, and the sharpness constant is proportional to the inverse of the Hoffman constant \cite{hoffman1952approximate} of the matrix that forms the KKT conditions.

\paragraph{Notation}
 Let $\N$ denote the set of natural numbers (starting from one).
 Let $\log(\cdot)$ and $\exp(\cdot)$ refer to the natural log and exponential function respectively.
 Let $\sigma_{\min}( \cdot )$, $\sigmaMin{\cdot}$, $\sigmaMax{\cdot}$ denote the minimum singular value, minimum nonzero singular value and maximum singular value of a matrix.
Let $\|z\|_2=\sqrt{\sum_j z_j^2}$ be the $\ell_2$ norm of $z$. For a bounded set $S \subseteq Z$,  $\diam( S )$ denotes the diameter of $S$, i.e., $\diam( S ) := \sup_{s, s' \in S} \| s - s' \|$.

The remainder of this section discusses related literature. In Section \ref{sec:unified-pda}, we present a unified viewpoint of the sublinear rate of classic primal-dual algorithms. In Section \ref{sec:sharpness}, we discuss the basic properties and examples of sharp primal-dual problems. In Section \ref{sec:restarts}, we present two restart schemes: fixed frequency restarts and adaptive restarts, and prove the linear convergence of both algorithms.
In Section~\ref{sec:tightness}, we consider the special case of bilinear problems. In this setting, we show restarts obtain optimal convergence rates and that standard PDHG is provably slower than restarted PDHG for solving bilinear problems. 
In Section \ref{sec:efficent_comp}, we present efficient algorithms to compute the normalized duality gap. In Section \ref{sec:experimental-results}, we present numerical experiments that showcase the effectiveness of our proposed restart schemes.

\subsection{Related Literature}

\textbf{Convex-convave primal-dual problems.} 
The study of convex-concave primal-dual problems has a long history.
Convex-concave primal-dual problems are a special case of monotone variational inequalities.
In his seminal work~\cite{rockafellar1976monotone}, Rockafellar studied PPM for solving monotone variational inequalities. Later on, Tseng~\cite{tseng1995linear} shows that both PPM and EGM have a linear convergence rate for solving variational inequalities when certain complicated conditions are satisfied, and these conditions are satisfied for solving strongly-convex-strongly-concave primal-dual problems. In 2004, Nemirovski~\cite{nemirovski2004prox} proposed the Mirror Prox algorithm (a special selection of the prox function recovers EGM), which first showed that EGM has $O(\frac{1}{\varepsilon})$ sub-linear convergence rate for solving convex-concave primal-dual problems over a bounded and compact set.

There are several works that study the special case of \eqref{eq:poi-primal-dual} when the primal-dual function has bilinear interaction terms, i.e., $\Lag(x,y)=f(x)+x^{\T} B y - g(y)$ where $f(\cdot)$ and $g(\cdot)$ are both convex functions. Algorithms for solving these bilinear interaction problems include Douglas-Rachford splitting (a special case is Alternating Direction Method of Multipliers (ADMM))~\cite{douglas1956numerical,eckstein1992douglas} and Primal-Dual Hybrid Gradient Method (PDHG)~\cite{chambolle2011first}.

The linear convergence of primal-dual methods is widely studied. Daskalakis et al. \cite{daskalakis2018training} studies the Optimistic Gradient Descent Ascent (OGDA), designed for training GANs, and shows that OGDA converges linearly for bilinear problems $\Lag(x,y)=x^{\T} A y$ with a full rank matrix $A$. Mokhtari et al. \cite{mokhtari2020unified} shows that OGDA and  EGM both approximate PPM (indeed, the fact that EGM is an approximation to PPM was first shown in Nemirovski's earlier work \cite{nemirovski2004prox}). They further show that these three algorithms have a linear convergence rate when $\Lag(x,y)$ is strongly-convex strongly-concave or when $\Lag(x,y)=x^{\T} A y$ is bilinear with a square and full rank matrix. Lu \cite{lu2020s} presents an ODE framework to analyze the dynamics of unconstrained primal-dual algorithms, yielding tight conditions under which different algorithms exhibit linear convergence, including bilinear problems with nonsingular $A$. Eckstein and Bertsekas \cite{eckstein1990alternating} use a variant of ADMM to solve LP problems and show the linear convergence of their method. More recently, \cite{liang2018local,lewis2018partial} show that many primal-dual algorithms, including PDHG and ADMM, have eventual linear convergence when applied to LP problems satisfying a non-degeneracy condition.
Very recently, for PDHG on LP problems,  linear convergence is established without requiring a non-degeneracy assumption \cite{alacaoglu2019convergence}.

However, not all linear convergence rates are equal. For example, it is well known that both gradient descent and accelerated gradient descent (AGD) obtain linear convergence on smooth, strongly convex functions. However, the gradient descent bound is $O(L / \theta \log(1/\epsilon))$ and accelerated gradient descent's bound is $O(\sqrt{L / \theta} \log(1/\epsilon))$ where $L$ is the smoothness and $\theta$ is strong convexity parameter \cite{nesterov2013introductory}.
Therefore, accelerated gradient descent has a better dependence on the condition number (i.e., the ratio between $\theta$ and $L$) by a square root factor.
This significant improvement is seen in practice too. 
A similar phenomena occurs for PDHG where we show for bilinear problems, the dependency on the condition number improves by a square root with introduction of restarts.
We also show that, in practice, restarts improve the performance of PDHG.
A more detailed discussion of this issue and related literature appears in Section~\ref{sec:tightness}.

\textbf{Sharpness conditions.}
The concept of sharpness of a minimization problem was first proposed by Polyak~\cite{polyak1979sharp} in 1970s. The original work by Polyak assumes the optimal solution set is a singleton. This work was then generalized by Burke and Ferris \cite{burke1993weak} to include the possibility of multiple optima. The early work on sharp minimization problems focused on analyzing the finite termination of certain algorithms, for example, projected gradient descent \cite{polyakbook}, proximal-point method \cite{ferris1991finite}, and Newton's method \cite{burke1995gauss}. Sharpness of monotone variational inequalities with bounded feasible regions was studied by Marcotte and Zhu \cite{marcotte1998weak}, and they focused on the finite convergence of a two-stage algorithm. Notice the primal-dual problem~\eqref{eq:poi-primal-dual} is a special case of a variational inequality (VI)~\cite{harker1990finite}. Indeed, when the feasible region is bounded, the sharpness of primal-dual problems we propose, translating to VI form, is equivalent to the sharpness of variational inequalities up to a constant. Our results generalize the concept for unbounded regions (when translating to VI). More recently, sharpness plays an important role for developing faster convergence rates for first-order methods. In particular,
linear convergence for subgradient (restarted) descent on sharp non-smooth functions~\cite{yang2018rsg} and sharp non-convex (weakly-convex) minimization~\cite{davis2018subgradient}. Finally, similar to strong convexity, sharpness provides a lower bound on the optimality gap in distance to optimality. Both sharpness and strong convexity can be viewed as error bound conditions with different parameters~\cite{roulet2020sharpness}.

\textbf{Restart schemes.}
Restarting is a powerful technique in continuous optimization that can improve the practical and theoretical convergence of a base algorithm~\cite{pokutta2020restarting}. The approach does not require modifications to the base algorithm, and thus is in general easy to incorporate within standard implementations. Recently, there have been extensive work on this technique, in smooth convex optimization~\cite{o2015adaptive,roulet2020sharpness}, non-smooth convex optimization~\cite{yang2018rsg,freund2018new}, and stochastic convex optimization~\cite{lin2015universal,johnson2013accelerating,tang2018rest}.
Many of the previous works require estimating algorithmic parameters such as the strong convexity constant.
This estimation often requires computationally intensive hyperparameter searches.
An important topic is to avoid this estimation by developing adaptive methods, which were well-studied for accelerated gradient descent \cite{o2015adaptive,giselsson2014monotonicity, nesterov2013gradient, lin2014adaptive, fercoq2019adaptive, alamo2019restart}.
In this paper, we present two restart schemes for primal-dual methods: fixed frequency restarts and adaptive restarts, where the former needs an estimation of the sharpness constant, while the latter does not.

\textbf{Hoffman constant.} The Hoffman constant~\cite{hoffman1952approximate}, upper bounds the ratio between the distance to a non-empty polyhedron and the corresponding constraint violation. 
More formally, for a given non-empty polyhedron $P =\{z \in \R^{n + m} \mid K z \ge h\}$, the Hoffman constant $H(K)$ satisfies
$\dist(z, P)\le H(K) \|(h-K z)^+\|$ for all $z$.
The Hoffman constant can also be viewed as the inverse of the sharpness constant of the function $f(z)=\|(h-Kz)^+\|$, namely,
$$f(z)-\fStar\ge \frac{1}{H(K)} \dist(z, \ZStar)\ ,$$ 
by noticing the minimal objective is $\fStar=0$ when the polyhedron is non-empty. Furthermore, there is a natural connection between the Hoffman constant and Renegar's condition number~\cite{renegar1995incorporating,renegar1995linear}, which measures the distance to ill-posedness of a system of linear inequalities. The Hoffman constant also characterizes the inherent difficulty to solve a system of linear inequalities, and plays an important role in establishing the convergence properties of many optimization methods~\cite{luo1993error,renegar1995linear,gutman2020condition,pena2020new}.

\textbf{Concurrent work.} After posting the first version of our manuscript, we noticed a concurrent and independent work \cite{fercoq2021quadratic}\blue{. This work} presents a new regularity condition of primal-dual problems called quadratic error bound of the smoothed gap (QEB). Furthermore, \cite{fercoq2021quadratic} shows that LP problems satisfy QEB, and restarted PDHG has linear convergence under QEB condition if the QEB parameter is known to the user. Our condition is different from the QEB condition as it involves the sharpness of the normalized duality gap. Contributions of our paper that do not appear in \cite{fercoq2021quadratic} include: (i) we present an adaptive scheme that achieves the optimal convergence rate without knowing the sharpness parameter; (ii) we provide a lower bound that proves our restarted algorithms achieve the optimal linear convergence rate under our sharpness condition, whereas it is unclear whether the algorithms presented in \cite{fercoq2021quadratic} are optimal in their setting; (iii) our analysis applies to a variety of primal-dual algorithms including PPM, EGM, ADMM and PDHG, whereas \cite{fercoq2021quadratic} focuses on PDHG. 
On the other hand, contributions of \cite{fercoq2021quadratic} that do not appear in our paper include that (i) our sharpness condition is tied to LP, while the QEB condition is possibly more general; (ii) we show the linear convergence of the current iterate only in the bilinear setting whereas \cite{fercoq2021quadratic} analyzes the linear convergence of the current iterate under the QEB condition.

\section{A unified viewpoint of the sublinear ergodic convergence rate of primal-dual algorithms}\label{sec:unified-pda}
\blue{While there have been recent works on studying the last iterations of primal-dual algorithms~\cite{condat2022distributed},
many previous convergence bounds for primal-dual problems \eqref{eq:poi-primal-dual} are with respect to the average over the algorithm's iterates~\cite{nemirovski2004prox,chambolle2016ergodic,he20121}.} 
This bound is known as the algorithm's ergodic convergence rate. 
In this section, we discuss a unified viewpoint of the sublinear ergodic convergence rate for classic primal-dual algorithms, which we will later use to develop the analysis for our restart schemes.

We consider a generic class of primal-dual algorithms for primal-dual problems \blue{starting from $t = 0$} with iterate update
\begin{equation}\label{eq:pda}
    z^{t+1}, \tz^{t+1} = \PrimalDualStep(z^t, \eta) \ ,
\end{equation}
where $z^t$ is the \emph{iterate solution} at time $t$, $z^{t+1}$ is the iterate solution at time $t+1$, $\tz^{t+1}$ is the \emph{target solution} at time $t+1$, and $\eta~\blue{ \in (0,\infty)}$ is the step size of the algorithm. As we will see later, the convergence guarantee of some primal-dual algorithms may not be on the iterate solution $z^t$; hence, we introduce the target solution $\tz^t$. 
The output of the algorithm is a function of target sequence $\tz^1,\ldots, \tz^t$, and for the standard ergodic rate it is the average of target solutions, namely, $\bz^t=\frac{1}{t}\sum_{j=1}^t \tz^i$. Here we discuss four classic primal-dual algorithms that can be represented as \eqref{eq:pda}:
\begin{itemize}
    \item \textbf{(Proximal point method)} Proximal point method (PPM)~\cite{rockafellar1976monotone} is a classic algorithm for solving \eqref{eq:poi-primal-dual}. The update can be written as:
    \begin{align}\label{eq:PPM-update}
        \tz^{t+1}=z^{t+1}=(x^{t+1},y^{t+1}) \in \arg\min_{x\in X}\max_{y\in Y} \Lag(x,y)+\frac{1}{2\eta} \|x-x^t\|_2^2-\frac{1}{2\eta} \|y-y^t\|_2^2\ .
    \end{align}
    
    \item \textbf{(Extragradient method)} Extragradient method~\cite{tseng1995linear} (a special case of mirror prox~\cite{nemirovski2004prox}) is another classic method for solving  \eqref{eq:poi-primal-dual}. It is an approximation of PPM~\cite{nemirovski2004prox}. The update can be written as:
     \begin{align}\label{eq:extra-gradient-update}
    \begin{split}
        \tz^{t+1}&\in\argmin_{z\in Z} \bracket{F(z^t)^{\T} z + \frac{1}{2\eta} \|z-z^t\|_2^2 } \\ z^{t+1}&\in\argmin_{z\in Z} \bracket{F(\tz^{t+1})^{\T} z + \frac{1}{2\eta} \|z-z^t\|_2^2} \ .
    \end{split}
    \end{align}
    \blue{For this method, we assume $F(z)$ is $\Lip$-Lipschitz continuous.}
    
    \item \textbf{(Primal-dual hybrid gradient)} Primal-dual hybrid gradient (also called Chambolle and Pock method) targets primal-dual problems with bilinear interaction term, namely, $\Lag(x,y)=f(x)+y^{\T} \Mat x - g(y)$, and it is widely used in image processing~\cite{chambolle2011first,chambolle2016ergodic}. The update can be written as\footnote{\blue{PDHG is often presented in a form with different primal and dual step sizes~\cite{chambolle2011first,chambolle2016ergodic}. Here, we choose to use the same primal and dual step size for consistent notation with other primal-dual algorithms. Our results can easily extend to the case of different step sizes by rescaling. 
    In particular, by setting $\eta = \sqrt{\sigma \tau}$ and defining a rescaled space: $( \hat{x} , \hat{y}) = ( x \sqrt{\eta / \tau}, y \sqrt{\eta / \sigma})$ where $\tau \in (0,\infty)$ is the desired primal step and $\sigma \in (0,\infty)$ is the desired dual step size. 
    Applying \eqref{eq:PDHG-update} to this rescaled space, i.e., replacing $f(x)$ with $\hat{f}(\hat{x}) = f(\hat{x} / \sqrt{\eta / \tau})$, $g(y)$ with $\hat{g}(\hat{x}) = g(\hat{y} / \sqrt{\eta / \sigma})$, $X$ with $\hat{X} = \{ x \sqrt{\eta / \tau} : x \in X \}$,  $Y$ with $\hat{Y} = \{ y \sqrt{\eta / \sigma} : y \in Y \}$, $\| x - x^t \|$ with $\| \hat{x}^t - x^t \|$, and $\| y - y^t \|$ with $\| \hat{y}^t - y^t \|$, then substituting back $(\hat{x},\hat{y}) = ( x \sqrt{\eta / \tau}, y \sqrt{\eta / \sigma})$ and $(\hat{x}^{t},\hat{y}^{t}) = ( x^t \sqrt{\eta / \tau}, y^t \sqrt{\eta / \sigma})$
    yields the classic PDHG update:
    \begin{flalign*}
   x^{t+1}&\in\argmin_{x\in X} f(x)+(y^t)^{\T} \Mat x+\frac{1}{2 \tau} \|x-x^t\|_2^2 \\ y^{t+1}&\in\argmin_{y\in Y} g(y)-y^{\T} \Mat (2 x^{t+1}-x^t) +\frac{1}{2 \sigma} \|y-y^t\|_2^2 \ .
    \end{flalign*}
    }}:
    \begin{align}\label{eq:PDHG-update}
    \begin{split}
        x^{t+1}&\in\argmin_{x\in X} f(x)+(y^t)^{\T} \Mat x+\frac{1}{2\eta} \|x-x^t\|_2^2 \\ y^{t+1}&\in\argmin_{y\in Y} g(y)-y^{\T} \Mat (2 x^{t+1}-x^t) +\frac{1}{2\eta} \|y-y^t\|_2^2  \\
        \tz^{t+1}&=z^{t+1} =(x^{t+1}, y^{t+1}) \ .
    \end{split}
    \end{align}
    \item \textbf{(Alternating direction method of multipliers)}
    Alternating direction method of multipliers (ADMM)~\cite{douglas1956numerical,eckstein1992douglas,boyd2011distributed,he20121} targets linearly constrained convex optimization problems with separable structure
    \begin{align*}
        \min_{x_U\in {X}_{U}, x_V\in {X}_{V}}\  & 
        \ \theta_1(x_U) + \theta_2(x_V) \\
        \text{s.t.}\  &\  U x_U + V x_V = q \ ,
    \end{align*}
    for which the Lagrangian becomes
    \begin{flalign}\label{eq:dr-lag}
    \min_{x\in X}\max_{y\in Y} \Lag(x,y) := \theta(x) - y^{\T} \begin{pmatrix} \U & \V \end{pmatrix} \begin{pmatrix}
    x_{\U} \\
    x_{\V}
    \end{pmatrix} + q^{\T} y \ ,
    \end{flalign}
    where $x=(x_{\U},x_{\V})$ , $\theta(x) = \theta_{\U}(x_{\U}) + \theta_{\V}(x_{\V})$, $X=X_U \times X_V$, and $Y=\R^m$. The update of ADMM is
    \begin{align}\label{eq:dr-update}
    \begin{split}
        x_{\U}^{t+1}&\in \argmin_{x_{\U}\in X_{\U}}\bracket{\theta_1(x_{\U})+\frac{\eta}{2} \left\|(U x_U+V x_V^t - q )-\frac{1}{\eta} y^t\right\|^2_2} \\
        x_{\V}^{t+1}&\in \argmin_{x_{\V}\in X_{\V}}\bracket{\theta_2(x_{\V})+\frac{\eta}{2} \left\|(U x_U^{t+1}+V x_V - q )-\frac{1}{\eta} y^t\right\|^2_2} \\
        y^{t+1}&=y^t-\eta(U x_U^{t+1}+V x_V^{t+1} - q) \\
        \tz^{t+1} &= (x_{\U}^{t+1}, x_{\V}^{t+1}, y^t-\eta(U x_U^{t+1}+V x_V^{t} - q)) \\
        z^{t+1}&=(x^{t+1}_U, x^{t+1}_V, y^{t+1})
    \end{split}
    \end{align}
\end{itemize}

Although the above four classic primal-dual algorithms have different update rules and may even target different primal-dual problems, it turns out they all satisfy the following assumption, which provides a unified viewpoint of primal-dual algorithms as well as a unified analysis on their ergodic rate.

\begin{property}\label{property:sufficient-decay}
Consider a primal-dual algorithm with iterate update \eqref{eq:pda}. There exists $ C>0$ and a semi-norm $\|\cdot\|$ such that it holds for any $t \in \N$  and $z\in Z$ that
\begin{equation}
\label{eq:decay}
\Lag(\tx^{t+1},y) - \Lag(x,\ty^{t+1}) \le \frac{C}{2}\|z-z^{t}\|^{2}- \frac{C}{2}\|z-z^{t+1}\|^2 \ .
\end{equation}
\end{property}
Property \ref{property:sufficient-decay} presents an upper bound on the primal-dual gap at the target solution $\tz^{t+1}$. Notice the RHS of \eqref{eq:decay} involves the iterate solution $z^t$ instead of the target solution $\tz^t$, which is intentional.
The next proposition shows how classic primal-dual algorithms satisfy Property \ref{property:sufficient-decay} as well as their corresponding constants $C$ and semi-norms $\|\cdot\|$. These are known results for individual algorithms; here we provide a unified viewpoint.

\begin{proposition}\label{prop:assumption-one-holds}
PPM, EGM, PDHG and ADMM satisfy Property \ref{property:sufficient-decay} with
\begin{itemize}
    \item {(PPM)} $C=\frac{1}{\eta}$ and Euclidean norm for any $\eta>0$;
    \item {(EGM)} $C=\frac{1}{\eta}$ and Euclidean norm for $0<\eta\le \frac{1}{\Lip}$;
    \item {(PDHG)} $C=\frac{1}{\eta}$ and $\|z\|^2=z^{\T} M z$ for $0<\eta\le \frac{1}{\sigmaMax{A}}$, where $M=\twomatrix{\eye}{-\eta \Mat^{\T}}{-\eta \Mat}{\eye}$;
    \item {(ADMM)} $C=1$ and $\|z\|^2=z^{\T} M z$ for $\eta>0$, where $M=\begin{pmatrix}
    0 & 0 & 0 \\
    0 & \eta \V^{\T} \V & 0 \\
    0 & 0 & \frac{1}{\eta} \eye
    \end{pmatrix}$. %
\end{itemize}
\end{proposition}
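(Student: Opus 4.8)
The plan is to prove all four cases from a single template, letting only the algebra and the choice of (semi-)norm differ. The first, algorithm-independent step peels off the left-hand side of \eqref{eq:decay}: adding and subtracting $\Lag(\tx^{t+1},\ty^{t+1})$ and using convexity of $\Lag$ in $x$ together with concavity in $y$ at the target point $\tz^{t+1}$ (for PDHG and ADMM the relevant subgradients are supplied by the subproblem optimality conditions below) gives $\Lag(\tx^{t+1},y)-\Lag(x,\ty^{t+1}) \le \langle F(\tz^{t+1}), \tz^{t+1}-z\rangle$ for every $z\in Z$. It therefore suffices, in each case, to bound the monotone-operator term $\langle F(\tz^{t+1}),\tz^{t+1}-z\rangle$ by the telescoping quadratic $\tfrac{C}{2}\|z-z^t\|^2-\tfrac{C}{2}\|z-z^{t+1}\|^2$. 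The workhorse throughout is the $M$-weighted three-point identity $\langle z^{t+1}-z^t, z-z^{t+1}\rangle_M = \tfrac12\|z-z^t\|_M^2 - \tfrac12\|z-z^{t+1}\|_M^2 - \tfrac12\|z^{t+1}-z^t\|_M^2$, where $\langle\cdot,\cdot\rangle_M$ is the inner product induced by a symmetric positive semidefinite $M$ and $\|\cdot\|_M$ its (semi-)norm.

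Second, for the two methods whose residual is a clean multiple of $z^{t+1}-z^t$, I read the variational inequality straight off the updates. For PPM the stationarity of the regularized saddle subproblem yields $\langle F(z^{t+1})+\tfrac1\eta(z^{t+1}-z^t),\, z-z^{t+1}\rangle\ge 0$ for all $z\in Z$; since $\tz^{t+1}=z^{t+1}$, rearranging to $\langle F(z^{t+1}),z^{t+1}-z\rangle \le \tfrac1\eta\langle z^{t+1}-z^t, z-z^{t+1}\rangle$, applying the identity with $M=\eye$, and discarding the nonpositive term $-\tfrac{1}{2\eta}\|z^{t+1}-z^t\|^2$ gives \eqref{eq:decay} with $C=1/\eta$ in the Euclidean norm. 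For PDHG I carry out the same computation blockwise: the $x$- and $y$-subproblem optimality conditions, with the subgradients they select, make the $x$- and $y$-components of $F(z^{t+1})$ equal to $\Mat^\T(y^{t+1}-y^t)-\tfrac1\eta(x^{t+1}-x^t)$ and $\Mat(x^{t+1}-x^t)-\tfrac1\eta(y^{t+1}-y^t)$, so that $F(z^{t+1}) = -\tfrac1\eta M(z^{t+1}-z^t)$ with $M=\twomatrix{\eye}{-\eta \Mat^\T}{-\eta \Mat}{\eye}$. Applying the $M$-weighted identity closes the case, but only once $M\succeq 0$; since the eigenvalues of $M$ are $1\pm\eta\sigma_i(\Mat)$, this is exactly $\eta\,\sigmaMax{\Mat}\le 1$, which is where the step-size restriction (and the fact that $\|\cdot\|$ is only a semi-norm) enters.

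Third, EGM requires handling the mismatch $\tz^{t+1}\neq z^{t+1}$. Here I use both variational inequalities — from the extrapolation step and from the update step — split $\langle F(\tz^{t+1}),\tz^{t+1}-z\rangle = \langle F(\tz^{t+1}), z^{t+1}-z\rangle + \langle F(\tz^{t+1}), \tz^{t+1}-z^{t+1}\rangle$, and telescope each inner product via the Euclidean identity. This leaves the cross term $\langle F(\tz^{t+1})-F(z^t), \tz^{t+1}-z^{t+1}\rangle$, which $\Lip$-Lipschitz continuity of $F$ plus Young's inequality bound by $\tfrac{\Lip}{2}\|\tz^{t+1}-z^t\|^2+\tfrac{\Lip}{2}\|\tz^{t+1}-z^{t+1}\|^2$; the choice $\eta\le 1/\Lip$ makes this precisely cancel the two negative square terms produced by the identities, again giving $C=1/\eta$.

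The main obstacle is ADMM, and I would treat it last. Two features make the bookkeeping delicate: the semi-norm induced by $M=\diag(0,\ \eta\,\V^\T\V,\ \tfrac1\eta\eye)$ is degenerate on the $x_U$-block, and the target solution $\tz^{t+1}$ deliberately uses $x_V^t$ rather than $x_V^{t+1}$. The plan is to write out the stationarity conditions of the $x_U$-, $x_V$-, and $y$-subproblems, eliminate the multiplier using the dual update $y^{t+1}=y^t-\eta(\U x_U^{t+1}+\V x_V^{t+1}-q)$, and verify that, after substitution, $\langle F(\tz^{t+1}),\tz^{t+1}-z\rangle$ collapses to $\langle z^{t+1}-z^t, z-z^{t+1}\rangle_M$ — the vanishing $x_U$-block of $M$ being exactly what lets the $x_U^{t+1}$ terms drop out, and the use of $x_V^t$ in $\tz^{t+1}$ being what aligns the remaining terms with $\V^\T\V$. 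The $M$-weighted identity then finishes the argument with $C=1$. I expect verifying this collapse — that the three coupled optimality conditions reorganize into a single preconditioned proximal step — to be the most error-prone part of the calculation.
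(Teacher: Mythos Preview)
Your proposal is correct and matches the paper's approach: the paper proves only the PPM case in detail (via exactly the variational inequality and three-point identity you use) and for EGM, PDHG, and ADMM simply cites Nemirovski~\cite{nemirovski2004prox}, Chambolle--Pock~\cite{chambolle2016ergodic}, and He--Yuan~\cite{he20121}, whose arguments are precisely the ones you sketch. One cosmetic point: for PDHG and ADMM you phrase the residual relation as an equality ($F(z^{t+1})=-\tfrac{1}{\eta}M(z^{t+1}-z^t)$, and the ADMM ``collapse''), whereas with constraint sets $X,Y$ it should be read as the variational inequality $\langle F(\tz^{t+1})+\tfrac{C}{1}M(z^{t+1}-z^t),\,z-z^{t+1}\rangle\ge 0$, just as you did for PPM; given that you explicitly say you ``read the variational inequality straight off the updates,'' this is evidently shorthand rather than a gap.
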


\begin{proof}
{(PPM)} It follows from the optimality condition of \eqref{eq:PPM-update} that for all $ z\in Z $,
$$
\left\langle F(z^{t+1}) + \frac{1}{\eta} (z^{t+1}-z^t), z^{t+1} - z \right\rangle \le 0 \ .
$$
After rearranging the above inequality, we obtain
$$
\left\langle F(z^{t+1}), z^{t+1} - z \right\rangle \le \frac{1}{2\eta} \|z-z^t\|^2 - \frac{1}{2\eta} \|z-z^{t+1}\|^2 - \frac{1}{2\eta} \|z^t-z^{t+1}\|^2\le \frac{1}{2\eta} \|z-z^t\|^2 - \frac{1}{2\eta} \|z-z^{t+1}\|^2\ .
$$
We finish the proof for PPM by noticing $\Lag(x^{t+1},y) - \Lag(x,y^{t+1})\le \left\langle F(z^{t+1}), z^{t+1} - z \right\rangle$.

{(EGM)} See, for example, the proof of Theorem 3.2 in \cite{nemirovski2004prox} (a simplified version of the proof can be found in Theorem 18.2 in \cite{he2016mirror}).

{(PDHG)} See, for example, Lemma 1 and the proof of Theorem 1 in \cite{chambolle2016ergodic}.

{(ADMM)} See, for example, Lemma 3.1 in \cite{he20121}.
\end{proof}

The next proposition presents how to obtain the ergodic sublinear convergence rate and non-expansiveness of primal-dual algorithms as a direct consequence of Property \ref{property:sufficient-decay}, which will be later used in developing the convergence results for our restart scheme. Again, these are known results for each individual algorithm.

\begin{proposition}\label{prop:convergence}
Consider the sequences $\{\tz^t, z^t\}$ generated from a generic primal-dual algorithm with iterate update \eqref{eq:pda} and initial solution $z^{0}$.  Denote $\zind{t} = \frac{1}{t}\sum_{i=1}^t \tz^i$ as the average target solution. Suppose the primal-dual algorithm satisfies Property \ref{property:sufficient-decay}, then it holds for any $\zStar\in \ZStar$ and $z\in Z$ that
\begin{enumerate}[label=\roman*.,ref={\ref{prop:convergence}.\roman*}]
    \item\label{prop:generic-nonexpansive} (Non-expansiveness) $\|z^{t+1}- \zStar\|\le \|z^{t}- \zStar\|$.
    \item\label{prop:sublinear-rate} (Sublinear rate on the primal-dual gap) $\Lag(\bx^t, y)-\Lag(x,\by^t)\le \frac{C \|z-z^{0}\|^2}{2 t}$.
\end{enumerate}
\end{proposition}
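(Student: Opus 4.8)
The plan is to derive both claims as direct consequences of Property~\ref{property:sufficient-decay}, with part (i) using a clever choice of the free point $z$ and part (ii) using a telescoping argument combined with convexity--concavity. No deep machinery is needed; the content is entirely in how one specializes and sums the inequality \eqref{eq:decay}.

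For part (i), I would specialize \eqref{eq:decay} to $z = \zStar$ for an arbitrary saddle point $\zStar = (\xStar, \yStar) \in \ZStar$. The key observation is that the resulting left-hand side is nonnegative. Indeed, since $\zStar$ is a saddle point, $\xStar$ minimizes $\Lag(\cdot, \yStar)$ and $\yStar$ maximizes $\Lag(\xStar, \cdot)$, so $\Lag(\tx^{t+1}, \yStar) \ge \Lag(\xStar, \yStar) \ge \Lag(\xStar, \ty^{t+1})$, whence $\Lag(\tx^{t+1}, \yStar) - \Lag(\xStar, \ty^{t+1}) \ge 0$. Substituting into \eqref{eq:decay} gives $0 \le \tfrac{C}{2}\|\zStar - z^t\|^2 - \tfrac{C}{2}\|\zStar - z^{t+1}\|^2$, and since $C > 0$ this rearranges immediately to $\|z^{t+1} - \zStar\| \le \|z^t - \zStar\|$.

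For part (ii), I would fix an arbitrary $z \in Z$ and sum \eqref{eq:decay} over $i = 0, 1, \dots, t-1$. The right-hand side telescopes to $\tfrac{C}{2}\|z - z^0\|^2 - \tfrac{C}{2}\|z - z^t\|^2 \le \tfrac{C}{2}\|z - z^0\|^2$, while the left-hand side becomes $\sum_{i=1}^{t}\bigl[\Lag(\tx^i, y) - \Lag(x, \ty^i)\bigr]$. I would then invoke Jensen's inequality in the correct direction: because $\Lag(\cdot, y)$ is convex, $\Lag(\bx^t, y) \le \tfrac{1}{t}\sum_{i=1}^t \Lag(\tx^i, y)$, and because $\Lag(x, \cdot)$ is concave, $\Lag(x, \by^t) \ge \tfrac{1}{t}\sum_{i=1}^t \Lag(x, \ty^i)$. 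Combining these two bounds with the telescoped sum and dividing by $t$ yields $\Lag(\bx^t, y) - \Lag(x, \by^t) \le \tfrac{C \|z - z^0\|^2}{2t}$, as claimed.

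There is no genuine obstacle here; the proof is routine bookkeeping. The two points requiring care are (a) the index alignment between the iterate sequence $z^i$ and the target sequence $\tz^{i}$ (the sum runs over targets $\tz^1,\dots,\tz^t$ while the telescoping acts on iterates $z^0,\dots,z^t$), and (b) applying convexity to the primal average and concavity to the dual average in the correct directions so that the averaged gap is upper-bounded by the average of the pointwise gaps. Both are standard, and the semi-norm and constant $C$ enter only passively, so the argument is uniform across PPM, EGM, PDHG, and ADMM.
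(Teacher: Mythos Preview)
Your proposal is correct and matches the paper's own proof essentially step for step: part~(i) specializes \eqref{eq:decay} at $z=\zStar$ and uses the saddle-point inequality to make the left-hand side nonnegative, and part~(ii) telescopes \eqref{eq:decay} and applies Jensen (convexity in $x$, concavity in $y$) to pass from the sum of pointwise gaps to the gap at the average. The cautionary remarks you flag about index alignment and the direction of Jensen are exactly the only places where care is required.
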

\begin{proof}
First we prove i.  By setting $z=\zStar$ in \eqref{eq:decay} and rearranging the inequality, it holds for any $t$ that
\begin{align*}
\|\zStar-z^{t+1}\|^2 & \le  \|\zStar-z^{t}\|^{2} + \frac{2}{C} \Lag(\xStar,\ty^{t+1})-\frac{2}{C} \Lag(\tx^{t+1},\yStar) \\
& =  \|\zStar-z^{t}\|^{2} + \frac{2}{C} \Lag(\xStar,\ty^{t+1})- \frac{2}{C} \Lag(\xStar,\yStar) + \frac{2}{C} \Lag(\xStar,\yStar) - \frac{2}{C} \Lag(\tx^{t+1},\yStar) \\
& \le \|\zStar-z^{t}\|^{2}\ ,
\end{align*}
where the second inequality is because $\Lag(\xStar,\yStar)=\min_x \Lag(x,\yStar)\le \Lag(x^{t+1},\yStar)$ and $\Lag(\xStar,\yStar)=\max_y \Lag(\xStar,y)\ge \Lag(\xStar,y^{t+1})$.

We now prove ii:
\begin{align*}
    \Lag(\bx^t, y)-\Lag(x,\by^t) &\le \frac{1}{t}\pran{\sum_{i=1}^t \Lag(\tx^i,y) - \Lag(x,\ty^i)} \le \frac{C}{2t} \pran{\|z-z^{0}\|^2-\|z-\zind{t}\|^2}\le \frac{C \|z-z^{0}\|^2}{2t} \ ,
\end{align*}
where the first inequality is from convexity-concavity of $\Lag(x,y)$ and Jensen's inequality, and the second inequality holds by summing up \eqref{eq:decay} over $t$ and telescoping.
\end{proof}

The non-expansiveness in Proposition \ref{prop:convergence} implies that $z^t$ stays in a bounded region. The next assumption assumes the target solution $\tz^t$ is not far away from either $z^t$ or $z^{t-1}$, thus $\tz^t$ also stays in a bounded region.
\begin{property}
\label{property:iterates-are-close}
There exists $q > 0$ such that for all $t \in \N$ either $\|\tz^t-z^t\|\le q \dist{(z^t, \ZStar)}$, or $\|\tz^t-z^{t-1}\|\le q \dist{(z^t, \ZStar)}$.
\end{property}

The four classic primal-dual algorithms we discussed above satisfy Property \ref{property:iterates-are-close}:
\begin{proposition}\label{prop:show-iterates-are-close}
Under the same conditions as Proposition~\ref{prop:assumption-one-holds}, PPM, EGM, PDHG and ADMM satisfy assumption \ref{property:iterates-are-close} since
\begin{itemize}
\item (PPM) $\tz^t = z^{t}$ for all $t \in \N$, and therefore $q = 0$;
\item (EGM) $\| \tz^{t} - z^{t-1} \| \le 3 \dist( z^t, \ZStar)$ for all $t \in \N$, and therefore $q = 3$;
\item (PDHG) $\tz^t = z^{t}$ for all $t \in \N$, and therefore $q = 0$;
\item (ADMM) $\| \tz^t - z^{t} \| \le 2 \dist(z^t, \ZStar)$ for all $t \in \N$, and therefore $q = 2$.
\end{itemize}

\end{proposition}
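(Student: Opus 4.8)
The four cases split into two that are immediate and two that carry the real content. For PPM and PDHG the updates \eqref{eq:PPM-update} and \eqref{eq:PDHG-update} set $\tz^{t+1}=z^{t+1}$ by definition, so $\tz^t=z^t$ and the first alternative in Property~\ref{property:iterates-are-close} holds with $q=0$: the left-hand side $\|\tz^t-z^t\|$ is identically zero, so the bound is vacuous regardless of $\dist(z^t,\ZStar)$. Thus I only need to treat EGM and ADMM, and in both the strategy is the same: express the relevant difference ($\tz^t-z^{t-1}$ for EGM, $\tz^t-z^t$ for ADMM) as a \emph{per-step movement} of the iterates, and then bound that movement by a constant multiple of $\dist(z^t,\ZStar)$.

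For ADMM, reading off \eqref{eq:dr-update} at step $t$, the vectors $\tz^t$ and $z^t$ share the primal block $(x_\U^t,x_\V^t)$ and differ only in the dual block, with $\tz^t-z^t=(0,0,\eta \V(x_\V^t-x_\V^{t-1}))$. Evaluating the ADMM seminorm from Proposition~\ref{prop:assumption-one-holds}, whose $y$-block is $\tfrac{1}{\eta}\eye$ and whose $x_\U$-block vanishes, gives $\|\tz^t-z^t\|^2=\eta\|\V(x_\V^t-x_\V^{t-1})\|_2^2$, which is dominated by the full step $\|z^t-z^{t-1}\|^2$ in that same seminorm. It therefore suffices to bound this movement by $2\,\dist(z^t,\ZStar)$, using the $M$-seminorm descent that underlies the non-expansiveness in Proposition~\ref{prop:generic-nonexpansive}.

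For EGM, rewrite \eqref{eq:extra-gradient-update} as the projections $\tz^t=\Pi_Z(z^{t-1}-\eta F(z^{t-1}))$ and $z^t=\Pi_Z(z^{t-1}-\eta F(\tz^t))$. Nonexpansiveness of $\Pi_Z$ with $\Lip$-Lipschitzness of $F$ and $\eta\le 1/\Lip$ gives $\|\tz^t-z^t\|\le \eta\|F(z^{t-1})-F(\tz^t)\|\le \eta\Lip\,\|z^{t-1}-\tz^t\|\le\|z^{t-1}-\tz^t\|$, while combining the two projection inequalities with monotonicity of $F$ yields the standard descent estimate $\dist(z^t,\ZStar)^2\le \|z^{t-1}-\bar z\|^2-(1-\eta^2\Lip^2)\|z^{t-1}-\tz^t\|^2$ for $\bar z=\Pi_{\ZStar}(z^t)$. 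The plan is then to chain two facts: (i) the extragradient step $\|z^{t-1}-\tz^t\|$ is itself controlled by the distance of $z^{t-1}$ to $\ZStar$, again via the projection/monotonicity bound and $\eta\Lip\le1$; and (ii) a single EGM step cannot shrink $\dist(\cdot,\ZStar)$ by more than a fixed factor, i.e.\ $\dist(z^{t-1},\ZStar)\le c\,\dist(z^t,\ZStar)$, which follows by feeding the upper bound from (i) back into the descent estimate. Together these convert the bound on $\|z^{t-1}-\tz^t\|$ into one in terms of $\dist(z^t,\ZStar)$, splitting off the harmless correction via $\|z^{t-1}-\tz^t\|\le\|z^{t-1}-z^t\|+\|z^t-\tz^t\|$, and I would track the constants to confirm they collapse to the claimed $q=3$.

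The main obstacle, shared by EGM and ADMM, is precisely step (ii): passing from a bound phrased in terms of the \emph{previous} iterate's distance $\dist(z^{t-1},\ZStar)$ to one in terms of the \emph{current} distance $\dist(z^t,\ZStar)$. Non-expansiveness (Proposition~\ref{prop:generic-nonexpansive}) runs the wrong way here, since it only guarantees $\dist(z^t,\ZStar)\le\dist(z^{t-1},\ZStar)$; so the argument must instead lower-bound the one-step contraction, quantifying how the step-size restriction $\eta\Lip\le1$ for EGM (and the fixed $\eta$ for ADMM) prevents the distance from collapsing faster than a bounded ratio in a single step. This is exactly where the step-size conditions of Proposition~\ref{prop:assumption-one-holds} enter, and it is why PPM and PDHG, for which $\tz^t=z^t$, require none of this machinery.
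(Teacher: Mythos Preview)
Your treatment of PPM, PDHG, and the first half of ADMM (reducing $\|\tz^t-z^t\|$ to the one-step movement $\|z^t-z^{t-1}\|$ in the $M$-seminorm) matches the paper. The gap is in how you finish ADMM and in your entire EGM strategy, both of which hinge on a step you do not prove.

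The ``main obstacle'' you identify---passing from $\dist(z^{t-1},\ZStar)$ to $\dist(z^t,\ZStar)$ via a reverse non-expansiveness bound $\dist(z^{t-1},\ZStar)\le c\,\dist(z^t,\ZStar)$---is a genuine gap in your argument: you give no proof beyond ``feeding (i) back into the descent estimate,'' and no such uniform lower bound on the one-step contraction follows from $\eta\Lip\le 1$ alone. The paper avoids this entirely. For ADMM it fixes any $\zStar\in\ZStar$ and uses the triangle inequality together with \emph{ordinary} non-expansiveness:
\[
\|z^{t+1}-z^{t}\|\le\|z^{t+1}-\zStar\|+\|z^{t}-\zStar\|\le 2\|z^{t}-\zStar\|,
\]
the second step being Proposition~\ref{prop:generic-nonexpansive}. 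So non-expansiveness runs the \emph{right} way once you insert a fixed $\zStar$ instead of comparing successive distances to the set.

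For EGM the paper's route is also different from yours: it does not go through a descent lemma or projection non-expansiveness. It compares the first prox step in \eqref{eq:extra-gradient-update} at its minimizer $\tz^{t}$ against the feasible point $\zStar$, uses the VI characterization $F(\zStar)^\T(\tz^t-\zStar)\ge 0$ and $\Lip$-Lipschitzness of $F$, and after adding and subtracting $F(\zStar)$ obtains a quadratic inequality in $\|\tz^t-z^{t-1}\|$ that factors to give $\|\tz^t-z^{t-1}\|\le(1+2\eta\Lip)\|\zStar-z^{t-1}\|\le 3\|\zStar-z^{t-1}\|$. No comparison of $\dist(z^{t-1},\ZStar)$ with $\dist(z^t,\ZStar)$ is needed. (The resulting bound is literally in terms of $\|z^{t-1}-\zStar\|$; this suffices for the downstream use in Proposition~\ref{coro:weaker-assumption}.)
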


\begin{proof}
Note that the results for PPM and PDHG hold immediately.

\emph{Proof for EGM.}
The proof for EGM is somewhat technical so we defer it to Appendix~\ref{sec:proof-of:prop:show-iterates-are-close}.

\emph{Proof for ADMM.}
Let $\zStar = \min_{z \in \ZStar} \| z^{t} - z \|$. By \eqref{eq:dr-update} we have $\hat{x}^{t+1}_U = x^{t+1}_U$, $\hat{x}^{t+1}_V = x^{t+1}_V$, and $\hat{y}^{t+1} = y^{t+1} + \eta V (x_{V}^{t+1} - x_{V}^t)$. Therefore,
\begin{align*} \|\tz^{t+1}-z^{t+1}\| &= \sqrt{  (x_{V}^{t+1} - x_{V}^t)^{\T} {\eta V^{\T} V} (x_{V}^{t+1} - x_{V}^t) } \\ 
&= \|x_V^{t+1}-x_V^{t}\|_{\eta V^{\T} V} \\
&\le \|z^{t+1}-z^{t}\| \\
&\le \| z^{t+1} - \zStar \| + \| z^{t} - \zStar \| \\
&\le 2 \| z^{t} - \zStar \| \ ,
\end{align*}
where the last inequality uses Proposition~\ref{prop:generic-nonexpansive}.
\end{proof}

Property~\ref{property:restart-assumption} captures the minimal algorithmic properties required for restart schemes to achieve linear convergence, as shown in Section~\ref{sec:restarts}. This technical assumption essentially states that the primal-dual algorithm imposes an order $1/t$ sublinear convergence on the decay of the normalized duality gap (Property \ref{subproperty:reduce-potential-function}) and the iterates do not move too far away from the starting point (Property \ref{subproperty:distance-bound}).  Proposition~\ref{coro:weaker-assumption} demonstrates that if Property~\ref{property:sufficient-decay} and \ref{property:iterates-are-close} hold, then Property~\ref{property:restart-assumption} also holds (with different parameters $q$ and $C$), so Property~\ref{property:restart-assumption} holds for the classic primal-dual algorithms (PPM, EGM, PDHG and ADMM).

\begin{property}\label{property:restart-assumption} 
There exist scalar constants $q, C >0$ such that for all initial points $z^{0} \in Z$, the output sequence $\{\zind{t}\}_{t=1}^{\infty}$ generated by the algorithm satisfies $\zind{t} \in Z$ and
\begin{enumerate}[label={\roman*.},ref={\ref{property:restart-assumption}.\roman*}]
\item\label{subproperty:reduce-potential-function} $\rho_{\| \zind{t} - z^{0} \|} (\zind{t}) \le
\frac{2 C \| \zind{t} - z^{0} \|}{t}$,
\item\label{subproperty:distance-bound}
$\| \zind{t} - z^{0} \| \le (q + 2) \dist( z^{0}, \ZStar )$.
\end{enumerate}
\end{property}

\begin{proposition}\label{coro:weaker-assumption}
Consider the sequences $\{\tz^t, z^t\}$ generated from a generic primal-dual algorithm with iterate update \eqref{eq:pda} and initial solution $z^{0}$. Suppose the primal-dual algorithm satisfies Property \ref{property:sufficient-decay} and \ref{property:iterates-are-close}, then Property~\ref{property:restart-assumption} holds with $\bz_t=\frac{1}{t}\sum_{i=1}^t \tz_t$.
\end{proposition}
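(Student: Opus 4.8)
The plan is to establish the two sub-properties of Property~\ref{property:restart-assumption} separately: part~\ref{subproperty:reduce-potential-function} from the ergodic sublinear rate (Proposition~\ref{prop:sublinear-rate}), and part~\ref{subproperty:distance-bound} from non-expansiveness (Proposition~\ref{prop:generic-nonexpansive}) together with Property~\ref{property:iterates-are-close}. The requirement $\bz^t \in Z$ is immediate, since each target iterate $\tz^i$ lies in the convex set $Z$ and $\bz^t = \frac1t\sum_{i=1}^t \tz^i$ is a convex combination of them.

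For part~\ref{subproperty:reduce-potential-function}, write $r := \|\bz^t - z^0\|$ and unfold the normalized duality gap,
\[
\rho_{r}(\bz^t) = \frac{1}{r}\max_{z \in \zBall{r}{\bz^t}}\{\Lag(\bx^t, y) - \Lag(x, \by^t)\}.
\]
The crucial observation is a doubling bound: any $z=(x,y) \in \zBall{r}{\bz^t}$ obeys $\|z - \bz^t\| \le r$, so by the triangle inequality $\|z - z^0\| \le \|z - \bz^t\| + \|\bz^t - z^0\| \le 2r$. Feeding the sublinear rate $\Lag(\bx^t, y) - \Lag(x, \by^t) \le \frac{C\|z - z^0\|^2}{2t}$ of Proposition~\ref{prop:sublinear-rate} into the maximum and using $\|z-z^0\|\le 2r$ bounds the numerator by $\frac{C(2r)^2}{2t} = \frac{2Cr^2}{t}$; dividing by $r$ gives exactly $\rho_r(\bz^t) \le \frac{2Cr}{t}$, which is part~\ref{subproperty:reduce-potential-function}. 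When $r = 0$, that is $\bz^t = z^0$, the same rate applied to shrinking balls around $z^0$ gives $\rho_s(z^0) \le \frac{Cs}{2t} \to 0$, so $\rho_0(\bz^t) = 0$ and the bound still holds.

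For part~\ref{subproperty:distance-bound}, the triangle inequality (convexity of the semi-norm) gives $\|\bz^t - z^0\| \le \frac1t\sum_{i=1}^t \|\tz^i - z^0\|$, so it suffices to prove $\|\tz^i - z^0\| \le (q+2)\dist(z^0, \ZStar)$ for each $i$. Fix $\zStar \in \ZStar$ with $\|z^0 - \zStar\| = \dist(z^0, \ZStar)$. Non-expansiveness makes $\|z^j - \zStar\| \le \|z^0 - \zStar\| = \dist(z^0, \ZStar)$ for every index $j \ge 0$; consequently $\dist(z^j, \ZStar) \le \dist(z^0, \ZStar)$ and, by one more triangle inequality, $\|z^j - z^0\| \le 2\dist(z^0, \ZStar)$. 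Property~\ref{property:iterates-are-close} supplies, for each $i$, a reference iterate $w^i \in \{z^i, z^{i-1}\}$ with $\|\tz^i - w^i\| \le q\,\dist(z^i, \ZStar) \le q\,\dist(z^0, \ZStar)$; combining with $\|w^i - z^0\| \le 2\dist(z^0, \ZStar)$ yields $\|\tz^i - z^0\| \le \|\tz^i - w^i\| + \|w^i - z^0\| \le (q+2)\dist(z^0, \ZStar)$. Averaging over $i$ completes the bound.

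The main obstacle is the bookkeeping in part~\ref{subproperty:distance-bound}: both alternatives of Property~\ref{property:iterates-are-close} must be absorbed into a single estimate, and non-expansiveness must be invoked with one fixed $\zStar$ (the projection of $z^0$) so that the monotone decay of $\|z^j - \zStar\|$ controls all iterates and both reference indices $j \in \{i-1, i\}$ simultaneously; the edge index $j = i-1 = 0$ is handled automatically, since non-expansiveness holds with equality there. By contrast, part~\ref{subproperty:reduce-potential-function} collapses to a single line once the doubling inequality $\|z - z^0\| \le 2\|\bz^t - z^0\|$ is spotted, and the same sublinear rate covers both the $r>0$ and the degenerate $r=0$ cases.
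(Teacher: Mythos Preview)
Your proof is correct and essentially identical to the paper's: part~\ref{subproperty:reduce-potential-function} uses the containment $\zBall{r}{\bz^t}\subseteq\zBall{2r}{z^0}$ together with Proposition~\ref{prop:sublinear-rate}, and part~\ref{subproperty:distance-bound} bounds each $\|\tz^i-z^0\|$ by $(q+2)\dist(z^0,\ZStar)$ via non-expansiveness and Property~\ref{property:iterates-are-close} before averaging. The only cosmetic difference is that the paper writes the triangle inequality for $\|\tz^i - z^0\|$ as a single three-term chain, whereas you record the intermediate bound $\|w^i - z^0\|\le 2\dist(z^0,\ZStar)$ separately.
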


\begin{proof}
Suppose $r=\| \zind{t} - z^{0} \|>0$, then
$$
\rho_r(\zind{t}) = \frac{1}{r}\max_{\tz \in \zBall{r}{\zind{t}}} \{\Lag(\bx^t, \ty)-\Lag(\tx,\by^t)\} \le  \frac{1}{r} \max_{\tz \in \zBall{2 r}{z^{0}}}\{ \Lag(\bx^t, \ty)-\Lag(\tx,\by^t)\} \le \frac{C (2 r)^2 }{2 t r} = \frac{2 C r}{t} \ ,
$$
where the first inequality is from $\zBall{r}{\zind{t}} \subseteq \zBall{2 r}{z^{0}}$ by the definition of $r$, and the second inequality uses Proposition~\ref{prop:sublinear-rate}.
Alternatively, if $\| \zind{t} - z^{0} \| = 0$ then it suffices to observe that $\rho_r(\zind{t}) \le 2 C r / t$ and $\rho_r(z) \ge 0$ implies $\limsup_{r \rightarrow 0^{+}}\rho_r(\zind{t}) = 0$.
This establishes Property~\ref{subproperty:reduce-potential-function} . 

We turn to proving Property~\ref{subproperty:distance-bound}.
Let $\zStar \in \argmin_{z \in \ZStar } \| z - z^{0} \|$, then for either $i = t$ or $i = t - 1$,
$$\| \tz^{t} - z^{0} \| \le \| \tz^{t} - z^{i} \| +  \| z^{i} - \zStar \| + \| \zStar - z^{0} \| \le \| \tz^{t} - z^{i} \| + 2 \| \zStar - z^{0} \| \le  (q + 2) \| z^{0} -  \zStar \|\ ,$$ where the first inequality uses the triangle inequality, the second inequality uses Proposition~\ref{prop:generic-nonexpansive}, and the third inequality uses $\| \tz^{t} - z^{i} \| \le q \dist( z^{i}, \ZStar )$ as per Property~\ref{property:iterates-are-close}. From $\| \tz^{t} - z^{0} \| \le (q + 2) \| z^{0} -  \zStar \|$ it follows that
$$
\| \zind{t} - z^{0} \| = \left\| \frac{1}{t} \sum_{i=1}^{t} \tz^{i}  - z^{0}\right\| \le \frac{1}{t} \sum_{i=1}^{t} \| \tz^{i}  - z^{0}\| \le \frac{1}{t} \sum_{i=1}^{t} (q + 2) \| z^{0} -  \zStar \| = (q + 2) \dist(z^{0}, \ZStar) \ ,
$$
which establishes Property~\ref{subproperty:distance-bound}.
\end{proof}

\begin{table}[]
\begin{center}
\begin{tabular}{@{}c|cccc@{}}
\toprule
    & $C$ & valid $\eta$ values & $q$ & semi-norm squared \\ \midrule
PPM & $\frac{1}{\eta}$ & $(0,\infty)$ & $0$ & $z^{\T} z$ \\ \midrule
EGM & $\frac{1}{\eta}$ & $(0, 1/\Lip]$ & $3$ & $z^{\T} z$ \\ \midrule
PDHG          & $\frac{1}{\eta}$ & $(0,1/\sigmaMax{A}]$ & $0$ & $z^{\T} \twomatrix{\eye}{-\eta \Mat^{\T}}{-\eta \Mat}{\eye} z$ \\ \midrule
ADMM           & $1$ & $(0,\infty)$ & $2$ & $z^{\T} \begin{pmatrix}
    0 & 0 & 0 \\
    0 & \eta \V^{\T} \V & 0 \\
    0 & 0 & \frac{1}{\eta} \eye
    \end{pmatrix} z$ \\
\bottomrule
\end{tabular}
\caption{The corresponding constants and norms in Property~\ref{property:restart-assumption} and  for different algorithms. Recall $\Lip$ is the Lipschitz constant of $F$.
The values in this table can be found by inspecting Proposition~\ref{prop:assumption-one-holds}, Proposition~\ref{prop:show-iterates-are-close} and 
Proposition~\ref{coro:weaker-assumption}.}
\label{tbl:summary-of-C-and-q-values}
\end{center}
\end{table}

\section{Sharpness for primal-dual problems}\label{sec:sharpness}

This section discusses the basic properties of the normalized duality gap and sharp primal-dual problems, and then presents the concept of sharp primal-dual problems along with examples.

Critical to the paper is the concept of a sharp primal-dual problem. Indeed, when this condition holds, we are able to establish linear convergence using restarts (Section~\ref{sec:restarts}).

\begin{definition}\label{def:sharpness}
We say a primal-dual problem 
\eqref{eq:poi-primal-dual} is $\alpha$-sharp on the set $\IterateSet\subseteq Z$ if $\rho_{r}$ is $\alpha$-sharp (see \eqref{eq:stand_sharp}) on $S$ for all $r \in (0, \diam(S)]$, i.e., it holds for all $z \in \IterateSet$ that
\begin{equation}\label{eq:sharpness}
    \alpha \dist( z, \ZStar) \le \rho_{r}(z)\ .
\end{equation}
\end{definition}

As we can see, the sharpness of a primal-dual problem essentially says the localized duality gap $\rho_r(z)$ is sharp in the traditional sense. A key concept is that the sharpness is defined on a set $S\subseteq Z$. Even if the domain $Z$ is unbounded, the sharpness can be defined on a bounded set $S$.

From the definition of the normalized duality gap given in \eqref{eq:rhorz} and the sharpness condition, we can immediately establish the monotonicity of sharpness in $r$ and $S$ (Facts~\ref{fact:monotone-r-rho-r} and \ref{fact:monotone-sharpness-on-sets} and Proposition \ref{prop:basic-properties}).

\begin{fact} \label{fact:monotone-r-rho-r}
It holds for any $z$ that $r \rho_r(z)$ is monotonically non-decreasing for $r \in [0,\infty)$.
\end{fact}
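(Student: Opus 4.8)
The plan is to reduce the claim to the elementary fact that the maximum of a fixed function over a nested, increasing family of sets is monotone. First I would clear the normalization: for $r \in (0,\infty)$, the definition \eqref{eq:rhorz-pos} gives $r\,\rho_r(z) = \max_{\tz \in \zBall{r}{z}} \{ \Lag(x,\ty) - \Lag(\tx, y) \}$, so the factor $r$ in the denominator cancels and the quantity of interest is exactly the \emph{unnormalized} localized duality gap. Fixing $z = (x,y)$, write $g(\tz) := \Lag(x,\ty) - \Lag(\tx,y)$ for the objective being maximized, where $\tz = (\tx,\ty)$.

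The key structural observation is that the feasible sets $\zBall{r}{z} = \{ \tz \in Z \mid \|z - \tz\| \le r\}$ are increasing in $r$: for $0 < r_1 \le r_2$ we have $\zBall{r_1}{z} \subseteq \zBall{r_2}{z}$, since any point within distance $r_1$ of $z$ lies within distance $r_2$. Maximizing the same function $g$ over a larger set can only increase the maximum, so $\max_{\tz \in \zBall{r_1}{z}} g(\tz) \le \max_{\tz \in \zBall{r_2}{z}} g(\tz)$, which is precisely $r_1 \rho_{r_1}(z) \le r_2 \rho_{r_2}(z)$. This establishes monotonicity on $(0,\infty)$.

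It remains to cover the endpoint $r = 0$. There $r\,\rho_r(z) = 0 \cdot \rho_0(z) = 0$, using that $\rho_0(z) = \limsup_{r \to 0^+}\rho_r(z)$ is finite by \eqref{eq:rhor-zero}. Since $z \in \zBall{r}{z}$ for every $r \ge 0$ and $g(z) = \Lag(x,y) - \Lag(x,y) = 0$, each maximum over $\zBall{r}{z}$ is at least $0$; hence $r\,\rho_r(z) \ge 0$ for all $r > 0$, matching the value at $r = 0$. Combining this with the $(0,\infty)$ case yields non-decreasingness on all of $[0,\infty)$. I do not anticipate any genuine obstacle here; the only point demanding care is the boundary at $r = 0$, where the normalized gap is defined through a $\limsup$ rather than directly, so one must invoke its finiteness to conclude that the product $r\,\rho_r(z)$ vanishes there.
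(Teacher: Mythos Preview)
Your argument is correct and is exactly the immediate observation the paper has in mind: it states Fact~\ref{fact:monotone-r-rho-r} without proof precisely because $r\rho_r(z)$ is, by definition, the supremum of a fixed function over the nested family $\zBall{r}{z}$, and your handling of the $r=0$ endpoint via finiteness of $\rho_0(z)$ and the nonnegativity $r\rho_r(z)\ge g(z)=0$ is the natural completion. The only cosmetic point is that finiteness of $\rho_0(z)$ is justified not by \eqref{eq:rhor-zero} itself but by the footnote there (which invokes Proposition~\ref{prop:basic-properties}); otherwise your proof matches the paper's intended reasoning.
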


\begin{fact}
\label{fact:monotone-sharpness-on-sets}
The sharpness condition is monotone in set $S$, namely, suppose $S_1 \subseteq S_2\subseteq Z$ and $\rho_r(z)$ is $\alpha$-sharp in $S_2$, then $\rho_r(z)$ is $\alpha$-sharp in $S_1$.
\end{fact}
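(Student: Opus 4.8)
The plan is to unwind both invocations of Definition~\ref{def:sharpness} and observe that certifying $\alpha$-sharpness on $S_1$ amounts to checking a strictly smaller family of inequalities than the one already guaranteed on $S_2$. The crucial structural point, which I would isolate first, is that the normalized duality gap $\rho_r(z)$ defined in \eqref{eq:rhorz} depends only on the ambient feasible region $Z$ (through the ball $\zBall{r}{z}$) and never on the set over which sharpness is asserted. Thus shrinking the sharpness domain from $S_2$ to $S_1$ does not alter the function $\rho_r$ at all; it only changes which points $z$ and which radii $r$ must be tested.

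Next I would record the two monotonicities that do the work. Since $S_1 \subseteq S_2$, enlarging a set can only enlarge the supremum defining its diameter, so $\diam(S_1) \le \diam(S_2)$ and hence $(0, \diam(S_1)] \subseteq (0, \diam(S_2)]$. Combined with the inclusion $S_1 \subseteq S_2$ of the test points themselves, this means that every pair $(z, r)$ with $z \in S_1$ and $r \in (0, \diam(S_1)]$ also satisfies $z \in S_2$ and $r \in (0, \diam(S_2)]$. The hypothesis of $\alpha$-sharpness on $S_2$ therefore applies verbatim to such a pair and yields $\alpha \dist(z, \ZStar) \le \rho_r(z)$; since the pair was arbitrary within the $S_1$-range, this is exactly the condition that $\rho_r$ is $\alpha$-sharp on $S_1$.

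I do not expect a genuine obstacle, as the statement is a set-containment/monotonicity observation rather than an analytic estimate. The only two things warranting care are confirming that $\rho_r(z)$ is independent of the sharpness set (immediate from \eqref{eq:rhorz}) and that $\diam(\cdot)$ is monotone under inclusion (immediate from its definition as a supremum over pairs); both follow directly from the definitions with no computation.
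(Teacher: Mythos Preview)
Your proposal is correct and matches the paper's treatment: the paper states Fact~\ref{fact:monotone-sharpness-on-sets} without proof, noting only that it follows immediately from the definition of the normalized duality gap and the sharpness condition, and your argument is precisely the unwinding of those definitions.
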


\begin{proposition}\label{prop:basic-properties} 
 It holds for any fixed $z$ that $\rho_r(z)$ is monotonically non-increasing for $r \in [0,\infty)$. 
\end{proposition}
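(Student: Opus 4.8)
The plan is to work directly with the numerator of the normalized duality gap rather than with $\rho_r$ itself. Fix $z=(x,y)$ and introduce the gap function $\phi(\tz) := \Lag(x,\ty) - \Lag(\tx,y)$ for $\tz=(\tx,\ty)\in Z$, so that by \eqref{eq:rhorz-pos} we have $\rho_r(z) = \tfrac{1}{r}\sup_{\tz\in\zBall{r}{z}}\phi(\tz)$ for $r\in(0,\infty)$. The first thing I would record is that $\phi$ is concave on $Z$: the term $\Lag(x,\cdot)$ is concave in $\ty$ by concavity of $\Lag$ in its second argument, and $-\Lag(\cdot,y)$ is concave in $\tx$ by convexity of $\Lag$ in its first argument, so their sum is concave in $\tz=(\tx,\ty)$. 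I would also note the normalization $\phi(z)=\Lag(x,y)-\Lag(x,y)=0$, and that $z\in\zBall{r}{z}$ forces $\sup_{\tz\in\zBall{r}{z}}\phi(\tz)\ge 0$.

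The core of the argument is a contraction-toward-$z$ construction. Take any $0<r_1\le r_2$ and any feasible point $\tz_2\in\zBall{r_2}{z}$, and set $\tz_1 := \bigl(1-\tfrac{r_1}{r_2}\bigr)z + \tfrac{r_1}{r_2}\tz_2$. Since $Z$ is convex and $z,\tz_2\in Z$, we get $\tz_1\in Z$; and since $\|\tz_1-z\| = \tfrac{r_1}{r_2}\|\tz_2-z\| \le r_1$, we get $\tz_1\in\zBall{r_1}{z}$. Concavity of $\phi$ together with $\phi(z)=0$ then gives $\phi(\tz_1)\ge \bigl(1-\tfrac{r_1}{r_2}\bigr)\phi(z)+\tfrac{r_1}{r_2}\phi(\tz_2)=\tfrac{r_1}{r_2}\phi(\tz_2)$. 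Because $\tz_1\in\zBall{r_1}{z}$, the left-hand side is bounded above by $\sup_{\tz\in\zBall{r_1}{z}}\phi(\tz)$; taking the supremum over $\tz_2\in\zBall{r_2}{z}$ on the right then yields $\sup_{\tz\in\zBall{r_1}{z}}\phi(\tz)\ge \tfrac{r_1}{r_2}\sup_{\tz\in\zBall{r_2}{z}}\phi(\tz)$. Dividing by $r_1$ rearranges exactly into $\rho_{r_1}(z)\ge\rho_{r_2}(z)$, which is the claimed monotonicity on $(0,\infty)$.

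To finish I would extend the result to the endpoint $r=0$ using the definition $\rho_0(z)=\limsup_{r\to 0^+}\rho_r(z)$ from \eqref{eq:rhor-zero}. Since $\rho_r(z)$ is non-increasing on $(0,\infty)$, the one-sided limit as $r\to 0^+$ exists and equals $\sup_{r>0}\rho_r(z)$, so $\rho_0(z)\ge\rho_r(z)$ for every $r>0$; combined with the preceding step this shows $\rho_r(z)$ is non-increasing on all of $[0,\infty)$ (with the value at $0$ simply being the largest, possibly infinite, value).

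The step I expect to require the most care is the handling of the supremum rather than a maximum: because $\|\cdot\|$ is only a semi-norm, $\zBall{r}{z}$ need not be compact and a maximizer may fail to exist. I avoid any appeal to attainment by performing the contraction on an arbitrary feasible $\tz_2$ and passing to suprema at the end, so the only ingredients actually needed are concavity of $\phi$ (from convex-concavity of $\Lag$) and convexity of $Z$ ensuring $\tz_1$ stays feasible; beyond these the computation is routine.
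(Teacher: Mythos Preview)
Your proof is correct and follows essentially the same approach as the paper: both contract a point $\tz_2\in\zBall{r_2}{z}$ toward $z$ by the factor $r_1/r_2$ and invoke convex-concavity of $\Lag$; you package the two convexity/concavity inequalities into a single concavity statement for $\phi$, while the paper treats the $x$- and $y$-parts separately, but the content is identical. One small improvement in your version is that you work with arbitrary feasible $\tz_2$ and pass to suprema, whereas the paper's proof writes $z_2\in\argmax$ and thus tacitly assumes the maximum is attained, which need not hold when $\|\cdot\|$ is only a semi-norm.
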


\begin{proof}
We prove the result for $r \in (0,\infty)$ which immediately implies the result also holds for $r = 0$ by definition of $\rho_0(z)$.
Consider any $z\in Z$ and $0<r_1<r_2$. Let $z_2\in\argmax_{ \tz \in \zBall{r_2}{z}} \bracket{\Lag(x, \ty) - \Lag(\tx, y)}$. Then we have that $z_1:=z+\frac{r_1}{r_2} (z_2-z)\in\zBall{r_1}{z}$. Moreover, it follows from convexity of $\Lag(x,y)$ over $x$ that $\Lag(x_1, y)\le \frac{r_1}{r_2} \Lag(x_2, y) + (1- \frac{r_1}{r_2}) \Lag(x,y)$, which after rearrangement becomes
$$
\Lag(x,y)-\Lag(x_2,y) \le \frac{r_2}{r_1}\pran{ \Lag(x,y)-\Lag(x_1, y)} \ .
$$
Similarly, it follows from concavity of $\Lag(x,y)$ over $y$ that
$$
\Lag(x,y_2)-\Lag(x,y) \le \frac{r_2}{r_1}\pran{ \Lag(x,y_1)-\Lag(x, y)} \ .
$$
Therefore, it holds that
\begin{align*}
    \rho_{r_2}(z)&=\frac{\Lag(x,y_2)-\Lag(x_2,y)}{r_2} \\ 
    &=\frac{\pran{\Lag(x,y_2)-\Lag(x,y)}+\pran{\Lag(x,y)-\Lag(x_2,y)}}{r_2} \\ 
    &\le \frac{\pran{\Lag(x,y_1)-\Lag(x,y)}+\pran{\Lag(x,y)-\Lag(x_1,y)}}{r_1} \\
    &\le \rho_{r_1}(z) \ ,
\end{align*}
which finishes the proof for the proposition.
\end{proof}

Proposition~\ref{prop:rho-duality-gap} shows that when the normalized duality gap is a reasonable metric to measure the quality of a solution: the normalized duality gap is zero if and only if the primal-dual gap is also zero.

\begin{proposition}\label{prop:rho-duality-gap}
For any $r \in [0,\infty)$,
the primal-dual gap \eqref{eq:primal-dual-gap} at $z$ is zero if and only if $\rho_r(z) = 0$.
\end{proposition}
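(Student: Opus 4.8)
The plan is to show that both quantities vanish exactly when $z$ is a saddle point, by comparing the global maximization defining the primal-dual gap with the ball-restricted maximization appearing in the numerator of $\rho_r$. Write $G(z) := \max_{\tz \in Z}\{\Lag(x, \ty) - \Lag(\tx, y)\}$ for the primal-dual gap and, for $r \in (0,\infty)$, let $N_r(z) := \max_{\tz \in \zBall{r}{z}}\{\Lag(x, \ty) - \Lag(\tx, y)\}$ denote the numerator of $\rho_r(z)$, so that $\rho_r(z) = N_r(z)/r$. The first observation is that both are nonnegative and ordered: the feasible choice $\tz = z$ gives value $\Lag(x,y) - \Lag(x,y) = 0$, so $G(z) \ge 0$ and $N_r(z) \ge 0$; and since $\zBall{r}{z} \subseteq Z$, maximizing over the smaller set yields $N_r(z) \le G(z)$. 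This immediately settles the easy direction for $r > 0$: if $G(z) = 0$ then $0 \le N_r(z) \le G(z) = 0$, hence $\rho_r(z) = N_r(z)/r = 0$.

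The nontrivial direction (still for fixed $r > 0$) is $\rho_r(z) = 0 \Rightarrow G(z) = 0$, and this is where I expect the main work. I would argue by contrapositive: assume $G(z) > 0$ and exhibit a point of $\zBall{r}{z}$ with strictly positive gap value, forcing $N_r(z) > 0$. Pick a maximizer $\tz^\star = (\tx^\star, \ty^\star) \in Z$ with $\Lag(x, \ty^\star) - \Lag(\tx^\star, y) = G(z) > 0$, and for small $\lambda \in (0,1)$ form the segment point $z_\lambda = z + \lambda(\tz^\star - z) = (x_\lambda, y_\lambda)$, choosing $\lambda$ so that $\lambda \|\tz^\star - z\| \le r$, which keeps $z_\lambda \in \zBall{r}{z}$ (using convexity of $Z$). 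The key estimate is the convexity-concavity scaling identical in spirit to Proposition~\ref{prop:basic-properties}: convexity of $\Lag$ in $x$ gives $\Lag(x_\lambda, y) \le (1-\lambda)\Lag(x,y) + \lambda \Lag(\tx^\star, y)$, and concavity in $y$ gives $\Lag(x, y_\lambda) \ge (1-\lambda)\Lag(x,y) + \lambda \Lag(x, \ty^\star)$. Subtracting the two yields $\Lag(x, y_\lambda) - \Lag(x_\lambda, y) \ge \lambda\bigl(\Lag(x,\ty^\star) - \Lag(\tx^\star, y)\bigr) = \lambda G(z) > 0$, so $N_r(z) \ge \lambda G(z) > 0$ and hence $\rho_r(z) > 0$, the desired contradiction.

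It remains to handle $r = 0$, which I reduce to the $r > 0$ case via the monotonicity established in Proposition~\ref{prop:basic-properties}. Since $\rho_r(z)$ is non-increasing in $r$, the limit defining $\rho_0(z) = \limsup_{r \to 0^+}\rho_r(z)$ equals $\sup_{r > 0}\rho_r(z)$, so $\rho_0(z) \ge \rho_r(z) \ge 0$ for every $r > 0$. Consequently $\rho_0(z) = 0$ holds if and only if $\rho_r(z) = 0$ for all $r > 0$, and by the two directions proved above this is equivalent to $G(z) = 0$. This closes both the $r = 0$ case and the full equivalence. The only delicate point is the scaling estimate in the second paragraph; everything else is bookkeeping with the nonnegativity, the inclusion $\zBall{r}{z} \subseteq Z$, and the already-proved monotonicity of $\rho_r$.
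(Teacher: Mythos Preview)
Your proposal is correct and follows essentially the same route as the paper: the easy direction via the inclusion $\zBall{r}{z}\subseteq Z$, the contrapositive via a convexity--concavity scaling along the segment to a witness point, and the $r=0$ case via the monotonicity of $\rho_r$ in $r$ (Proposition~\ref{prop:basic-properties}).

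One minor slip: you ``pick a maximizer $\tz^\star$'' attaining $G(z)$, but since $Z$ may be unbounded the supremum defining $G(z)$ need not be attained (and $G(z)$ may even be $+\infty$). The paper instead just takes any $\hat z\in Z$ with $\Lag(x,\ty)-\Lag(\tx,y)>0$, which is all your argument actually uses; replacing $G(z)$ by that positive witness value in the line $N_r(z)\ge \lambda G(z)>0$ fixes the issue without changing anything else.
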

\begin{proof}
Suppose that the primal-dual gap at $z$ is zero, then by definition of $\rho_r$ we get for all $r \in (0,\infty)$ that $0 \le \rho_r(z) \le \frac{1}{r} \max_{\tz \in Z}\{ \Lag(x, \ty) - \Lag(\tx, y)\} = 0$. Taking the limit as $r \rightarrow 0^{+}$ shows $\rho_0(z) = 0$.

For the other direction argue by contrapositive.
In particular, we assume the primal-dual gap at $z$ is nonzero, and using this assumption show that for all $r\in [0,\infty)$ that $\rho_r(z)>0$.
From the assumption that the primal-dual gap is nonzero,
there exists $\hat{z} \in Z$ such that $\Lag(x, \ty) - \Lag(\tx, y) > 0$. For any $r \in (0,\infty)$, setting $\alpha = \min\left\{1 , \frac{r}{\| \hat{z} - z \|} \right\}$ yields  $\| \alpha (\hat{z} - z) \| = \alpha \| \tz - z \| \le r$ and $z + \alpha (\tz - z) = (1 - \alpha) z + \alpha \tz \in Z$. Therefore
$z + \alpha (\hat{z} - z) \in \zBall{r}{z}$. By definition of $\hat{z}$, $\alpha > 0$, convex-concavity of $\Lag$, and $z + \alpha (\hat{z} - z) \in \zBall{r}{z}$ we get
$$
0 < \frac{\Lag(x, \ty) - \Lag(\tx, y)}{\| \hat{z} - z \|} \le \frac{\Lag(x, y + \alpha (\ty - y)) - \Lag( y + \alpha (\ty - y), y)}{\alpha \| \hat{z} - z \|} \le \frac{r}{\alpha \| \hat{z} - z \|} \rho_r(z)\ ,
$$
thus $\rho_r(z)>0$ for $r \in (0,\infty)$. Moreover, since $\rho_r(z) > 0$ for $r \in (0,\infty)$ we deduce $\rho_0(z) > 0$ by Proposition~\ref{prop:basic-properties}. This finishes the proof.
\end{proof}

We next present examples of sharp primal-dual problems. Here we utilize Euclidean norm in the sharpness definition to illustrate the examples. 
For any other norms, the sharpness condition still holds (upto a constant) by noticing that any norms in finite space are equivalent to each other (See, for example, Theorem 5.36 in \cite{hunter2001applied}). As an illustrating example, we show below that the corresponding norm for PDHG in Proposition 1 is approximately Euclidean (up to a constant):

\begin{proposition}
\label{fact:pdhg-norm}
Let $\eta \in (0, 1/\sigmaMax{A})$.
If $\| z \|^2 =  \| x \|_2^2 - 2\eta y^{\T} A x +  \| y \|_2^2$, which is the PDHG norm, then
$$
{(1 - \eta \sigmaMax{A})} \| z \|_2^2  \le \| z \|^2 \le  (1 + \eta \sigmaMax{A}) \| z \|_2^2\ .
$$
\end{proposition}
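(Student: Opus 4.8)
The plan is to isolate the single term in which the PDHG norm differs from the Euclidean norm and bound it directly. Writing $z = (x,y)$, note that $\| z \|_2^2 = \| x \|_2^2 + \| y \|_2^2$, so by the definition of the PDHG norm we have the exact identity $\| z \|^2 - \| z \|_2^2 = -2\eta\, y^{\T} A x$. Consequently, both desired inequalities follow at once provided I can show the clean two-sided bound $|2\eta\, y^{\T} A x| \le \eta \sigmaMax{A}\, \| z \|_2^2$, since adding this cross term to $\| z \|_2^2$ then cannot change it by more than a factor $\eta \sigmaMax{A}$ in either direction.

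To bound the cross term I would apply Cauchy–Schwarz followed by the definition of the maximum singular value as the operator norm, giving $|y^{\T} A x| \le \| y \|_2 \| A x \|_2 \le \sigmaMax{A}\, \| x \|_2 \| y \|_2$. The only quantitative point requiring a moment's care is the final step: I would use the AM–GM inequality in the form $\| x \|_2 \| y \|_2 \le \tfrac{1}{2}\big( \| x \|_2^2 + \| y \|_2^2 \big)$ so that the factor of $2$ in $|2\eta\, y^{\T} A x|$ is exactly absorbed and the bound comes out as $\eta \sigmaMax{A}\,(\| x \|_2^2 + \| y \|_2^2) = \eta \sigmaMax{A}\, \| z \|_2^2$, rather than a looser constant.

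Combining these, I would conclude $-\eta \sigmaMax{A}\,\| z \|_2^2 \le -2\eta\, y^{\T} A x \le \eta \sigmaMax{A}\,\| z \|_2^2$, and adding $\| z \|_2^2$ throughout yields precisely $(1 - \eta \sigmaMax{A}) \| z \|_2^2 \le \| z \|^2 \le (1 + \eta \sigmaMax{A}) \| z \|_2^2$. I expect no genuine obstacle here: the argument is a routine Cauchy–Schwarz/AM–GM estimate, and the hypothesis $\eta \in (0, 1/\sigmaMax{A})$ enters only to make the lower constant $1 - \eta \sigmaMax{A}$ strictly positive, which is exactly what confirms that $\| \cdot \|$ is a genuine (positive definite) norm and that it is equivalent to the Euclidean norm with the stated constants.
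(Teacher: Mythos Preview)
Your proof is correct and essentially the same as the paper's. The paper bounds $-2\eta y^{\T} A x \ge -2\eta\sigmaMax{A}\|x\|_2\|y\|_2$ and then completes the square as $(1-\eta\sigmaMax{A})\|z\|_2^2 + \eta\sigmaMax{A}(\|x\|_2-\|y\|_2)^2$, which is just a rewriting of your AM--GM step $2\|x\|_2\|y\|_2 \le \|x\|_2^2+\|y\|_2^2$.
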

\begin{proof}
Lower bounding $\| z \|^2$ gives,
$\| z \|^2 \ge  \| x \|_2^2 -2 \eta \sigmaMax{A} \| y \|_2 \| x \|_2 + \| y \|_2^2 = {(1 - \eta \sigmaMax{A})} \| z \|_2^2 + \eta \sigmaMax{A} (\| x \|_2 -  \| y \|_2)^2 \ge  {(1 - \eta \sigmaMax{A})} \| z \|_2^2$.
Similarly, we have the other side.
\end{proof}

\subsection{Sharpness of primal-dual problems with bounded feasible regions}

\begin{lemma} Consider a primal-dual problem \eqref{eq:poi-primal-dual} with a bounded feasible set $Z = X \times Y$. Suppose the constraint set $Z$ has diameter $R$, and $Z$ is equipped with the Euclidean norm. Let $P(x)=\max_{y\in Y} \Lag(x,y)$ and $D(y)=\min_{x\in X} \Lag(x,y)$ denote the primal objective and the dual objective, respectively. If $P(x)$ is $\alpha_1$-sharp in $x$ and $D(y)$ is $\alpha_2$-sharp in $y$, then the primal-dual problem \eqref{eq:poi-primal-dual} is $\min\{\alpha_1,\alpha_2\}/R$-sharp on $S=Z$. 
\end{lemma}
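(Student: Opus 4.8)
The plan is to bound the numerator of $\rho_r(z)$ from below using the sharpness of the primal and dual objectives, and then divide by $r$, exploiting the bound $r \le R = \diam(Z)$. The key identity to exploit is that for any $z = (x,y) \in Z$ and any candidate $\hat z = (\hat x, \hat y) \in \zBall{r}{z}$, the quantity $\Lag(x,\hat y) - \Lag(\hat x, y)$ can be split and lower-bounded by choosing $\hat y$ to push up the primal direction and $\hat x$ to push down the dual direction.

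First I would observe that by definition $\rho_r(z) = \frac{1}{r}\max_{\hat z \in \zBall{r}{z}}\{\Lag(x,\hat y) - \Lag(\hat x, y)\}$, where I am free to choose a single convenient feasible $\hat z$ to obtain a lower bound on the max. The natural choice is to pick $\hat y$ approaching the maximizer of $\Lag(x,\cdot)$ and $\hat x$ approaching the minimizer of $\Lag(\cdot,y)$, but since the ball has radius $r$ which may be smaller than the distance to these optimizers, I would instead move a controlled fraction of the way. Specifically, for the primal piece I want $\Lag(x,\hat y) - P(x)$ to be controlled: recall $P(x) = \max_y \Lag(x,y)$, so $\Lag(x,\hat y) \le P(x)$, and I need a point $\hat y$ within distance $r$ of $y$ that recovers a definite portion of the primal suboptimality. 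Using concavity of $\Lag(x,\cdot)$ and moving from $y$ toward the primal-optimal $y$-argument by a fraction $r/R$ (which keeps the step within the ball since the feasible set has diameter $R$), I would get a lower bound of the form $\Lag(x,\hat y) - \Lag(x,y) \ge \frac{r}{R}(P(x) - \Lag(x,y))$. A symmetric argument using convexity of $\Lag(\cdot,y)$ handles the dual piece: moving $x$ toward the dual-optimal argument gives $\Lag(x,y) - \Lag(\hat x, y) \ge \frac{r}{R}(\Lag(x,y) - D(y))$.

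Next I would combine these two bounds. Summing them telescopes the $\Lag(x,y)$ terms, yielding
\[
\Lag(x,\hat y) - \Lag(\hat x, y) \ge \frac{r}{R}\bigl( P(x) - D(y) \bigr) = \frac{r}{R}\bigl( (P(x) - \fStar) + (\fStar - D(y)) \bigr),
\]
where $\fStar$ is the optimal (saddle) value, using that $P(x) \ge \fStar \ge D(y)$. Now I apply the $\alpha_1$-sharpness of $P$ and $\alpha_2$-sharpness of $D$ to get $P(x) - \fStar \ge \alpha_1 \dist(x, X^\star)$ and $\fStar - D(y) \ge \alpha_2 \dist(y, Y^\star)$, where $X^\star, Y^\star$ are the primal and dual optimal sets. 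Then $\rho_r(z) \ge \frac{1}{R}\bigl(\alpha_1 \dist(x,X^\star) + \alpha_2 \dist(y,Y^\star)\bigr) \ge \frac{\min\{\alpha_1,\alpha_2\}}{R}\bigl(\dist(x,X^\star)+\dist(y,Y^\star)\bigr)$, and since the solution set $\ZStar = X^\star \times Y^\star$ factors as a product, $\dist(z,\ZStar) \le \dist(x,X^\star) + \dist(y,Y^\star)$ in the Euclidean norm (indeed $\dist(z,\ZStar)^2 = \dist(x,X^\star)^2 + \dist(y,Y^\star)^2$, so the sum dominates). This gives $\rho_r(z) \ge \frac{\min\{\alpha_1,\alpha_2\}}{R}\dist(z,\ZStar)$, which is exactly $\min\{\alpha_1,\alpha_2\}/R$-sharpness.

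The main obstacle I anticipate is the careful verification of the feasibility and the fractional-step estimates in the first step: I must confirm that the interpolated points stay inside both the ball $\zBall{r}{z}$ and the convex feasible set $Z$, and that moving a fraction $r/R$ of the way toward the primal/dual optimizers yields the claimed fraction of the objective gap. This relies on convexity of $X$ and $Y$ (so the interpolants are feasible) and on the diameter bound $R$ ensuring the displacement has norm at most $r$ when scaled by $r/R$. A subtle point is whether the primal/dual optimizing arguments in the definitions of $P$ and $D$ are attained; since $Z$ is bounded (hence compact, assuming closedness) and $\Lag$ is continuous, the maxima and minima are attained, so the interpolation targets are well-defined. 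I would also need to handle the relationship between $\dist(z,\ZStar)$ and the separate primal/dual distances cleanly, but in the Euclidean norm this is immediate from the product structure.
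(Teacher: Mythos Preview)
Your proposal is correct and follows essentially the same route as the paper: both arguments establish $\rho_r(z) \ge \frac{P(x)-D(y)}{R}$ and then apply the sharpness of $P$ and $D$ together with $\dist(z,\ZStar) \le \dist(x,\XStar)+\dist(y,\YStar)$. The only cosmetic difference is that the paper reaches the first inequality by invoking the monotonicity of $\rho_r$ in $r$ (since $W_R(z)=Z$ when $R=\diam(Z)$, one has $\rho_r(z)\ge\rho_R(z)=\frac{P(x)-D(y)}{R}$), whereas you inline that monotonicity argument by explicitly interpolating toward the primal and dual optimizers by the fraction $r/R$.
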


\begin{proof}
Let $\XStar$ and $\YStar$ denote the minimizers of $P(x)$ and $D(x)$ respectively. Then it holds for any $z\in Z$ and $r\le R$ that
\begin{flalign*}
    \rho_r(z) &\ge \rho_R(z) = \frac{\max_{\tz \in Z}\{ \Lag(x, \ty) - \Lag(\tx, y)\} }{R} = \frac{P(x)-D(y)}{R} \\
    &\ge \frac{\alpha_1 \dist(x,\XStar)+\alpha_2 \dist(y,\YStar)}{R} \ge \frac{\min\{\alpha_1,\alpha_2\}}{R} \dist(z, \ZStar) \ ,
\end{flalign*}
where the first inequality utilizes Proposition \ref{prop:basic-properties}, the first equality is from the definition of $P(x)$ and $D(y)$, the second inequality uses sharpness, and the third inequality uses the triangle inequality.
\end{proof}

\subsection{Sharpness of bilinear problems}

We will find the following well-known Lemma useful. The proof of Lemma~\ref{prop:singular-value} appears in the Appendix~\ref{sec:proof-of:lem:minimum-nonsingular-value} for completeness.

\begin{lemma}\label{prop:singular-value}
Suppose $\| \cdot \|$ is the Euclidean norm.
Let $n$ and $m$ be positive integers, $H \in \R^{m \times n}$ and $h \in \R^{m}$. Define $S := \{ z \in \R^{n} : H z = h \}$. If $S \neq \emptyset$, then 
$\dist(S, z) \le  \frac{1}{\sigmaMin{H}} \| H z - h \|$.
\end{lemma}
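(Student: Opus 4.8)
The plan is to reduce the distance computation to a minimum-norm linear system and then control that norm through the singular value decomposition of $H$, being careful to use only the \emph{nonzero} singular values.

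First I would exploit the hypothesis $S \neq \emptyset$: fix some $z_0$ with $H z_0 = h$ and set $b := h - Hz$. Then $b = H(z_0 - z) \in \range(H)$, which is the one place the nonemptiness assumption is genuinely used. Next, writing an arbitrary point of $S$ as $z + d$, the condition $H(z+d) = h$ is equivalent to $Hd = b$, and $\|z - (z+d)\|_2 = \|d\|_2$; hence
$\dist(S, z) = \min\{ \|d\|_2 : Hd = b \}$,
i.e. the distance equals the norm of the minimum-norm solution of the consistent system $Hd = b$.

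Second, I would introduce the SVD $H = U\Sigma V^{\T}$, let $r = \Rank(H)$, and let $u_1, \dots, u_r$ and $v_1, \dots, v_r$ be the left/right singular vectors for the nonzero singular values $\sigma_1 \ge \cdots \ge \sigma_r = \sigmaMin{H} > 0$. Since $b \in \range(H) = \Span{u_1, \dots, u_r}$, I expand $b = \sum_{i=1}^r c_i u_i$ and take $d^{\star} = \sum_{i=1}^r (c_i/\sigma_i) v_i = H^{\CT} b$. This $d^\star$ satisfies $H d^\star = b$ and lies in $\range(H^{\T})$, the orthogonal complement of $\nullspace(H)$, so adding any element of $\nullspace(H)$ only increases the norm; thus $d^\star$ is the minimizer. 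Finally, $\|d^\star\|_2^2 = \sum_{i=1}^r c_i^2/\sigma_i^2 \le \sigma_r^{-2}\sum_{i=1}^r c_i^2 = \sigmaMin{H}^{-2}\|b\|_2^2$, which gives $\dist(S, z) = \|d^\star\|_2 \le \sigmaMin{H}^{-1}\|Hz - h\|_2$, as claimed.

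The main subtlety is that $H$ may be rank-deficient, so one cannot invoke $\sigma_{\min}(H)$ (which could be zero); the bound must involve only the nonzero singular values. The argument handles this by restricting the residual $b$ to $\range(H)$, where exactly the singular values $\sigma_1, \dots, \sigma_r$ act, and this restriction is precisely what the hypothesis $S \neq \emptyset$ licenses. The only other point requiring a little care is verifying that $d^\star = H^{\CT} b$ is truly the minimum-norm solution, i.e. the orthogonality of $\range(H^{\T})$ to $\nullspace(H)$; this is standard and follows directly from the SVD.
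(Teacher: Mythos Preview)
Your proof is correct and follows essentially the same route as the paper: reduce to the range of $H$ via the nonemptiness hypothesis, use the SVD, and bound by the smallest \emph{nonzero} singular value. One minor remark: you do slightly more work than needed by verifying that $d^\star = H^{\CT} b$ is the actual minimum-norm solution; since the claim is only an upper bound on $\dist(S,z)$, exhibiting any $d$ with $Hd = b$ and $\|d\|_2 \le \sigmaMin{H}^{-1}\|b\|_2$ already suffices, and that is all the paper's proof does.
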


\begin{lemma}\label{example:bilinear} Suppose $\Lag(x,y)= c^{\T} x - y^{\T} \Mat x + b^{\T} y$, $Z=\R^{m+n}$ and $Z$ is equipped with the Euclidean norm. Suppose there exists a finite solution to \eqref{eq:poi-primal-dual}. Then the primal-dual problem \eqref{eq:poi-primal-dual} is $\alpha$-sharp on $Z$ with $\alpha=\sigmaMin{\Mat}$.
\end{lemma}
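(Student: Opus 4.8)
The plan is to compute the normalized duality gap $\rho_r(z)$ in closed form, recognize it as the norm of the KKT residual of the saddle-point system, and then invoke the Hoffman-type bound of Lemma~\ref{prop:singular-value}.

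First I would expand $\Lag(x, \ty) - \Lag(\tx, y)$ with $\tz = (\tx, \ty)$, substituting $\tx = x + u$ and $\ty = y + v$. The bilinear structure forces every term that does not involve $(u,v)$ to cancel, leaving the purely linear expression $(\Mat^{\T} y - c)^{\T} u + (b - \Mat x)^{\T} v$. Maximizing this linear functional over the Euclidean ball $\{(u,v) : \|(u,v)\|_2 \le r\}$ (recall $Z = \R^{m+n}$ imposes no additional constraint) gives exactly $r \, \| (\Mat^{\T} y - c, \, b - \Mat x) \|_2$, so that
$$\rho_r(z) = \left\| \begin{pmatrix} \Mat^{\T} y - c \\ b - \Mat x \end{pmatrix} \right\|_2 ,$$
which is notably independent of $r$ (a degenerate, constant instance of the monotonicity in Proposition~\ref{prop:basic-properties}). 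Because the expression is $r$-free, verifying $\alpha$-sharpness for all $r \in (0, \diam(Z)] = (0, \infty)$ collapses to establishing a single inequality.

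Next I would identify the solution set. Since $Z$ is unbounded and $\Lag$ is smooth, $\zStar = (\xStar, \yStar) \in \ZStar$ if and only if $F(\zStar) = 0$, i.e. $\Mat \xStar = b$ and $\Mat^{\T} \yStar = c$. Writing $H = \begin{pmatrix} \Mat & 0 \\ 0 & \Mat^{\T} \end{pmatrix}$ and $h = (b, c)$, this reads $\ZStar = \{z : H z = h\}$, which is nonempty by the hypothesis that a finite solution exists. Lemma~\ref{prop:singular-value} then yields $\dist(z, \ZStar) \le \frac{1}{\sigmaMin{H}} \| H z - h \|_2$, and since $\| H z - h \|_2 = \rho_r(z)$ (the two vectors agree up to a reordering and signs, which the Euclidean norm ignores), we obtain $\rho_r(z) \ge \sigmaMin{H} \, \dist(z, \ZStar)$.

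Finally I would reconcile $\sigmaMin{H}$ with the claimed constant $\sigmaMin{\Mat}$: as $H$ is block diagonal with blocks $\Mat$ and $\Mat^{\T}$, whose nonzero singular values coincide, the minimum nonzero singular value of $H$ equals that of $\Mat$. The only delicate step is the opening computation—checking that the $(u,v)$-independent terms genuinely cancel and that the linear maximization over the ball is evaluated correctly; once $\rho_r(z)$ is exhibited as a residual norm, the remainder is the singular-value bound plus routine bookkeeping.
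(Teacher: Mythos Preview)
Your proposal is correct and follows essentially the same approach as the paper: compute $\rho_r(z)$ explicitly as the Euclidean norm of the first-order residual $F(z)$ (independent of $r$), identify $\ZStar$ as an affine set $\{z : Hz = h\}$, and apply Lemma~\ref{prop:singular-value}. The only cosmetic difference is that you package $H$ as the block-diagonal matrix $\mathrm{diag}(\Mat,\Mat^{\T})$ while the paper uses the anti-diagonal arrangement $\left(\begin{smallmatrix} 0 & \Mat^{\T} \\ \Mat & 0 \end{smallmatrix}\right)$; both have the same nonzero singular values as $\Mat$, so the conclusion is identical.
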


\begin{proof}
Since $\Lag(x,y) = c^{\T} x - y^{\T} \Mat x + b^{\T} y$ it holds for any $z$ and $\tz$ that
\begin{align}\label{eq:bilinear-0}
\begin{split}
    \Lag(x, \ty)-\Lag(\tx,y)&=\Lag(x, \ty)-\Lag(x,y)+ \Lag(x,y)-\Lag(\tx,y) \\
    &=(-\nabla_y \Lag(x,y)) ^{\T} (y-\ty) + \nabla_x \Lag(x,y) ^{\T} (x-\tx) \\
    &= F(z)^{\T} (z-\tz) \ .
\end{split}
\end{align}
Therefore, it holds that 
\begin{equation}\label{eq:bilinear-1}
    \rho_r(z)=\frac{1}{r}\max_{z' \in \zBall{r}{z}} F(z)^{\T} (\tz-z) = \|F(z)\| \ .
\end{equation}

Meanwhile, notice $\ZStar=\{z \in Z \mid F(z)=0\}=\{z \in Z \mid Hz=h\}$, where $H=\twomatrix{}{\Mat^{\T}}{\Mat}{}$ and $h=\vectorr{c}{b}$. It then follows from Lemma \ref{prop:singular-value} that
\begin{equation}\label{eq:bilinear-2}
    \|F(z)\|^2 = \|Hz-h\|^2 = \| A^\T y - c \|^2 + \| A x - b \|^2  \ge \sigmaMin{\Mat}^2 \dist(z, \ZStar)^2 \ .
\end{equation}
Combining \eqref{eq:bilinear-1} and \eqref{eq:bilinear-2}, we arrive at
\begin{align*}
    \rho_r(z) = \|F(z)\|\ge \sigmaMin{\Mat} \dist(z,\ZStar) \ ,
\end{align*}
which finishes the proof.
\end{proof}
\begin{remark}
In the above bilinear example, we see that the primal-dual problem \eqref{eq:poi-primal-dual} is $\alpha$-sharp if and only if $\|F(z)\|$ is $\alpha$-sharp in the standard sense. Indeed, $\|F(z)\|$ being a sharp function is a necessary condition for $\Lag(x,y)$ to be sharp for any unconstrained convex-concave primal-dual problem. This can be seen by noticing
\begin{align*}
\rho_0(z) = \|F(z)\| \ ,
\end{align*}
and by utilizing the monotonicity of $\rho_r(z)$ stated in Proposition \ref{prop:basic-properties}.
\end{remark}

\subsection{Sharpness of standard linear programming}\label{sec:sharpness-lp}

Consider a generic LP problem:
\begin{align}\label{eq:LP-primal}
\begin{split}
    \min_{x \in \R^{n}} &~ c^{\T} x\\
    s.t. &~ \Mat x= b \\
    &~ x \ge 0 \ ,
\end{split}
\end{align}
its dual:
\begin{align}\label{eq:LP-dual}
\begin{split}
    \max_{y \in \R^{m}} &~ b^{\T} y\\
    s.t. &~ \Mat^{\T} y \ge c,
\end{split}
\end{align}
and its Lagrangian form:
\begin{align}\label{eq:LP-Lagrangian}
    \min_{x\ge 0}\max_{y \in \R^{m}}  \Lag(x,y)= c^{\T} x + y^{\T} b-y^{\T} \Mat x \ .
\end{align}
Suppose \eqref{eq:LP-primal} and \eqref{eq:LP-dual} have feasible solutions. By strong duality we can reduce \eqref{eq:LP-primal} and \eqref{eq:LP-dual} to solving the system $K z \ge h$ where
\begin{flalign}
\label{eq:define:K-and-h-for-lp}
K := \begin{pmatrix}
\eye & 0 \\
-\Mat & 0 \\
\Mat &0 \\
0 & -\Mat^{\T} \\
-c^{\T} & b^{\T}
\end{pmatrix}\ , \quad h := \begin{pmatrix}
0 \\
-b \\
b \\
-c \\
0
\end{pmatrix}.
\end{flalign}
The term $\| (h - K z)^{+} \|$ is a common metric for the termination of algorithms for LP \cite{andersen2000mosek}.
Lemma~\ref{lem:bound-kkt-error} shows that this metric can be bounded via the normalized duality gap.

\begin{lemma}
\label{lem:bound-kkt-error}
Let $\| \cdot \|$ be the Euclidean norm.
Consider the Lagrangian \eqref{eq:LP-Lagrangian}, formed from the LP setup. Suppose that there is a finite solution to \eqref{eq:LP-Lagrangian}.
Then, for all $R \in (0,\infty)$, $r \in (0, R]$, and $z \in \zBall{R}{0}$ we have $$
\| (h - K z)^{+} \| \le  \rho_r(z) \sqrt{1 + R^2}\ .
$$
\end{lemma}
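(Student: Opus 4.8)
The plan is to exploit the identity from Lemma~\ref{example:bilinear}: the LP Lagrangian \eqref{eq:LP-Lagrangian} is exactly the bilinear form treated there, so $\Lag(x,\ty)-\Lag(\tx,y)=F(z)^{\T}(z-\tz)$ with $F(z)=(c-\Mat^{\T}y,\ \Mat x-b)$, and hence
\[
\rho_r(z)=\frac1r\max_{\tz\in\zBall{r}{z}}F(z)^{\T}(z-\tz).
\]
Proving the bound then reduces to exhibiting one admissible direction $u=z-\tz$ with $\|u\|\le r$ and $\tz\in Z$ for which $F(z)^{\T}u$ is large. First I would compute $(h-Kz)^{+}$ explicitly: since $z\in\zBall{R}{0}\subseteq Z$ has $x\ge 0$, the block of $K$ coming from $\eye$ contributes nothing, and one obtains $\|(h-Kz)^+\|^2=\|\Mat x-b\|^2+\|(\Mat^{\T}y-c)^+\|^2+((c^{\T}x-b^{\T}y)^+)^2$, i.e.\ the squared primal-infeasibility, dual-infeasibility and duality-gap residuals.

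The crux is to realize all three residuals with a single direction. I would take
\[
v:=\big(-(\Mat^{\T}y-c)^+,\ \Mat x-b\big)+(c^{\T}x-b^{\T}y)^+\, z,
\]
where the first summand handles primal and dual infeasibility (note its $x$- and $y$-blocks are orthogonal), and the second uses the observation that $F(z)^{\T}z=c^{\T}x-b^{\T}y$ is exactly the duality gap. Using the elementary identity $w^{\T}w^{+}=\|w^+\|^2$, a short calculation gives the clean equality $F(z)^{\T}v=\|(h-Kz)^+\|^2$, so $v$ captures every residual simultaneously.

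It remains to bound $\|v\|$ and to rescale $v$ into a feasible test direction. Writing $s:=\sqrt{\|\Mat x-b\|^2+\|(\Mat^{\T}y-c)^+\|^2}$ and $g:=(c^{\T}x-b^{\T}y)^+$, the triangle inequality together with $\|z\|\le R$ yields $\|v\|\le s+gR$. I would then set $u=\lambda v$ with $\lambda=\min\{r/\|v\|,\,1/g\}$ (taking $\lambda=r/\|v\|$ when $g=0$); this gives $\|u\|\le r$ and, crucially, keeps $\tz=z-u\in Z$, because its $x$-block equals $(1-\lambda g)x+\lambda(\Mat^{\T}y-c)^+\ge 0$ once $\lambda g\le 1$ and $x\ge 0$. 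Finally $\rho_r(z)\ge \tfrac1r F(z)^{\T}u=\lambda(s^2+g^2)/r$, and both branches close the argument: when $\lambda=r/\|v\|$ one needs $\sqrt{s^2+g^2}\sqrt{1+R^2}\ge s+gR$, which is precisely the Cauchy--Schwarz inequality applied to $(s,g)$ and $(1,R)$; and the degenerate branch $\lambda=1/g$ is handled by the crude bounds $\sqrt{s^2+g^2}\ge g$ and $\sqrt{1+R^2}\ge R\ge r$, giving $\sqrt{s^2+g^2}\sqrt{1+R^2}\ge rg$. Rearranging $\rho_r(z)\ge\|(h-Kz)^+\|/\sqrt{1+R^2}$ gives the claim.

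The main obstacle is capturing the duality-gap residual $(c^{\T}x-b^{\T}y)^+$ while staying inside the ball and keeping $\tz$ primal-feasible: the only natural direction for the gap is $z$ itself, whose norm is controlled by $R$ rather than by $1$, and this mismatch is exactly what produces the factor $\sqrt{1+R^2}$ through the Cauchy--Schwarz step. Balancing the (norm-one) infeasibility directions against the (norm-$R$) gap direction, and verifying feasibility of the rescaled point via the small case split on $\lambda$, are the delicate parts; the remaining computations are routine.
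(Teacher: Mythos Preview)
Your argument is correct. The identity $F(z)^{\T}v=\|(h-Kz)^{+}\|^{2}$, the feasibility check for $\tz=z-\lambda v$ via $\lambda g\le 1$, and the two-branch closure (Cauchy--Schwarz in the main branch, $\sqrt{1+R^{2}}\ge R\ge r$ in the degenerate one) all go through as written.

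The route, however, is genuinely different from the paper's. The paper does \emph{not} combine the infeasibility and duality-gap directions into a single test point. Instead it picks two separate points: $z_{1}=z+r\,w/\|w\|$ with $w=\big((-c+\Mat^{\T}y)^{+},\,b-\Mat x\big)$, which yields $\rho_{r}(z)\ge s$, and $z_{2}=z-\min\{r/\|z\|,1\}\,z$, which yields $\rho_{r}(z)\ge g/R$. It then simply squares and adds: $(1+R^{2})\rho_{r}(z)^{2}\ge s^{2}+g^{2}=\|(h-Kz)^{+}\|^{2}$. Your single-direction variant is a bit slicker conceptually and makes the provenance of $\sqrt{1+R^{2}}$ transparent (it is literally the Cauchy--Schwarz constant for $(s,g)$ against $(1,R)$), at the price of the small case split on $\lambda$ needed to keep $\tz$ primal-feasible. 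The paper's two-point version avoids that split---each direction is trivially feasible on its own---and replaces Cauchy--Schwarz by the even more elementary ``square both bounds and add'' step. The two arguments are equivalent in strength; they just organize the same ingredients differently.
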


\begin{proof}
Consider $z \in Z$. Let $v = \vectorr{(-c+\Mat^{\T}y)^+}{b - \Mat x }$ and note that $F(z) = \vectorr{c-\Mat^{\T}y}{\Mat x - b}.$
Recall from \eqref{eq:bilinear-0} that $\Lag(x, \ty)-\Lag(\tx,y)=F(z)^{\T} (z-\tz)$. Therefore, it follows from the definition of $\rho_r$ that for all $\tz \in \zBall{r}{z}$,
\begin{align}\label{eq:lp-0}
    \begin{split}
        \rho_r(z) & \ge \frac{\Lag(x,\ty)-\Lag(\tx, y)}{r} = \frac{F(z)^{\T} (z-\tz)}{r}  \ .
    \end{split}
\end{align}

Let $z_1 = z + r \frac{v}{\|v\|} $, then $\|z_1-z\|= r$. Meanwhile we have $(-c+\Mat^{\T}y)^+ \ge 0$, thus $z_1\in Z$ by noticing $z\in Z$. Therefore, $z_1\in \zBall{r}{z}$ and it follows from \eqref{eq:lp-0} that
\begin{align}\label{eq:lp-1}
    \begin{split}
        \rho_r(z) & \ge -\frac{1}{\|v\|} F(z)^{\T} v \ge \| v \|  \ ,
    \end{split}
\end{align}
where the last inequality utilizes the definition of $v$.

If $z=0$, let $z_2=0\in \zBall{r}{z}$, then we have from \eqref{eq:lp-0} that
\begin{equation}\label{eq:lp-22}
    \rho_r(z)\ge 0 = c^{\T} x-b^{\T} y= \frac{1}{r} (c^{\T} x-b^{\T} y) \ .
\end{equation}
Otherwise, let $z_2= z - \min\left\{ \frac{r}{\|  z \|}, 1 \right\} z$, then $\|z_2-z\|\le \frac{r}{\|  z \|}\|z\|\le r$. Meanwhile, we have $x_2\ge x-x =0$, thus $z_2\in Z$. Therefore, $z_2 \in \zBall{r}{z}$ and it follows from \eqref{eq:lp-0} that
\begin{align}\label{eq:lp-2}
    \begin{split}
        \rho_r(z) & \ge  \frac{1}{r} \min\left\{ \frac{r}{\|  z \|}, 1 \right\} 
        F(z)^{\T}  z = \min\left\{ \frac{1}{\|  z \|}, \frac{1}{r} \right\} 
        \pran{c^{\T} x- b^{\T} y} \ .
    \end{split}
\end{align}
Taking the worst case bound in \eqref{eq:lp-22} and \eqref{eq:lp-2} by noticing that $0<r\le R$, $0\le \|z\|\le R$ and $\rho_r(z)\ge 0$ yields
\begin{align}\label{eq:lp-3}
    \begin{split}
        \rho_r(z) & \ge \frac{1}{R}
        (c^{\T} x- b^{\T} y)^+ \ .
    \end{split}
\end{align}
Combining \eqref{eq:lp-1} and \eqref{eq:lp-3}, we obtain
\begin{align*}
    (1 + R^2)\rho_r(z)^2 \ge ((c^{\T} x- b^{\T} y)^+)^2 + \|(-c+\Mat^{\T} y)^+\|^2 +  \| b-\Mat x \|^2 = \|(h-Kz)^+\|^2\ ,
\end{align*}
where the equality utilizes the fact $x \ge 0$ and the property of $\ell_2$ norm. Taking the square root finishes the proof.
\end{proof}

Let $H(K)$ be the Hoffman constant \cite{hoffman1952approximate} of the matrix $K$ in Euclidean norm, i.e., it holds for any $z \in \R^{n+m}$ that
\begin{equation}\label{eq:hoffman}
    \dist(z, \ZStar)\le H(K) \|(h-Kz)^+\| \ .
\end{equation}

\begin{remark}
A popular characterization of $H(\Mat)$ for linear inequalities is (see for example \cite{klatte1995error, guler1995approximations, pena2020new})
\begin{equation}\label{eq:hoffman-2}
    H(\Mat)= \max_{\substack{J \subseteq \{1,...,2m+2n+1\} \\ \Mat_J \emph{\text{ has full row rank}}}} \frac{1}{\min_{v\in \R_+^J, \|v\|=1}\|\Mat_J^{\T} v\|}\ ,
\end{equation}
where $\Mat_J$ is the matrix with the corresponding rows of $\Mat$ indexed by $J$. 
In terms of the singular values of $\Mat$, a slightly looser bound would be 
$$
H(\Mat)\ge \max_{\substack{J \subseteq \{1,...,2m+2n+1\} \\ \Mat_J \emph{\text{ has full row rank}}}} \frac{1}{\sigmaMin{\Mat_J}}\ ,
$$
which ignores the constraints $v\in \R_+^J$ in \eqref{eq:hoffman-2} and take advantage of the Euclidean norm.

\end{remark}

\begin{lemma}\label{example:lp}
\textbf{(Sharpness of linear programming)} 
Let $R \in (0, \infty)$ and assume \eqref{eq:LP-Lagrangian} has a solution. Then, the primal-dual problem \eqref{eq:LP-Lagrangian} is $\alpha$-sharp on the
set $\zBall{R}{{0}}$ where $\alpha=\frac{1}{H(K)\sqrt{1+4R^2}}$.
\end{lemma}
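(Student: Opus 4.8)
The plan is to chain together the two ingredients already established: Lemma~\ref{lem:bound-kkt-error}, which bounds the KKT residual $\|(h - Kz)^{+}\|$ by the normalized duality gap, and the Hoffman bound \eqref{eq:hoffman}, which bounds $\dist(z, \ZStar)$ by the same KKT residual. Composing these two inequalities eliminates $\|(h-Kz)^{+}\|$ and expresses $\dist(z,\ZStar)$ directly in terms of $\rho_r(z)$, which is precisely the sharpness inequality \eqref{eq:sharpness} demanded by Definition~\ref{def:sharpness}.

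The one genuine subtlety is the choice of radius fed into Lemma~\ref{lem:bound-kkt-error}. To certify $\alpha$-sharpness on $S = \zBall{R}{{0}}$, I must verify $\alpha \dist(z,\ZStar) \le \rho_r(z)$ for all $z \in S$ and \emph{all} $r \in (0, \diam(S)]$. Since any two points of $\zBall{R}{{0}}$ are within distance $2R$ of each other by the triangle inequality, $\diam(S) \le 2R$, so $r$ ranges over $(0, 2R]$. This forbids applying Lemma~\ref{lem:bound-kkt-error} with its free radius parameter equal to $R$, because that lemma requires $r$ not to exceed the radius. Instead, for $z \in \zBall{R}{{0}}$ we have $\|z\| \le R \le 2R$, hence $z \in \zBall{2R}{{0}}$, and every $r \in (0, 2R]$ satisfies $r \le 2R$; so I invoke Lemma~\ref{lem:bound-kkt-error} with its radius parameter set to $2R$, obtaining $\|(h - Kz)^{+}\| \le \rho_r(z)\sqrt{1 + 4R^2}$. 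This is exactly the origin of the $\sqrt{1+4R^2}$ factor (as opposed to $\sqrt{1+R^2}$) in the statement.

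With this in hand the rest is a two-line composition. Combining the displayed residual bound with \eqref{eq:hoffman}, namely $\dist(z,\ZStar) \le H(K)\|(h-Kz)^{+}\|$, gives $\dist(z,\ZStar) \le H(K)\sqrt{1+4R^2}\,\rho_r(z)$ for every $z \in \zBall{R}{{0}}$ and $r \in (0, 2R]$. Rearranging yields $\rho_r(z) \ge \frac{1}{H(K)\sqrt{1+4R^2}} \dist(z,\ZStar) = \alpha\,\dist(z,\ZStar)$, which is \eqref{eq:sharpness} for the claimed $\alpha$. Because this holds uniformly across the whole range $r \in (0,\diam(S)]$, Definition~\ref{def:sharpness} is satisfied. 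Monotonicity of $\rho_r$ in $r$ (Proposition~\ref{prop:basic-properties}) serves as a consistency check here: the bound is tightest at the largest admissible $r$, and the argument above dispatches all such $r$ simultaneously.

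I anticipate no real obstacle beyond correctly tracking which radius to supply to Lemma~\ref{lem:bound-kkt-error}; everything else is routine algebra. The only point I would state with care is that \eqref{eq:hoffman} uses $\ZStar = \{z : Kz \ge h\}$, i.e., that the saddle-point set of \eqref{eq:LP-Lagrangian} coincides with the solution set of the KKT system $Kz \ge h$. This identification is exactly the strong-duality reduction set up preceding \eqref{eq:define:K-and-h-for-lp}, valid under the standing assumption that \eqref{eq:LP-Lagrangian} admits a finite solution, so it may be invoked without further argument.
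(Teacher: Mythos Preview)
Your proposal is correct and follows essentially the same approach as the paper: both arguments invoke Lemma~\ref{lem:bound-kkt-error} with its radius parameter set to $2R$ (using $\diam(\zBall{R}{0}) \le 2R$ so that every admissible $r$ lies in $(0,2R]$), and then chain the resulting inequality with the Hoffman bound \eqref{eq:hoffman}. Your treatment is more explicit about why the radius must be doubled, but the substance is identical.
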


\begin{proof}
For all $r \in (0, \diam(\zBall{R}{0})]  \subseteq (0, 2R]$, by Lemma~\ref{lem:bound-kkt-error} and \eqref{eq:hoffman},
$$
\rho_r(z) \sqrt{1 + 4 R^2} \ge \| (h - K z)^{+} \| \ge \frac{1}{H(K)} \dist(z, \ZStar)\ .
$$
\end{proof}

\subsection{Sharpness of ADMM for linear programming}
Following Lemma~\ref{example:lp}, we can show different formulations of LP may also be sharp. In particular, one popular method for LP is using ADMM~\cite{o2016conic}.
Let $X_U=\{x \in \R^{n} \mid Ax=b\}$ and $X_V=\{x \in \R^{n} \mid x\ge 0\}$.
Consider the following form of LP:
\begin{align*}
    \min_{x_U\in X_U, x_V\in X_V}\  & 
    \ c^{\T} x_V \\
    \text{s.t.}\  &\  x_U -  x_V = 0 \ ,
\end{align*}
and its Lagrangian form
\begin{flalign}\label{eq:lp-ADMM}
\min_{x_U\in X_U, x_V\in X_V}\max_{y\in\R^m} \Lag(x,y) = c^{\T} x_{V} - y^{\T} \begin{pmatrix} \eye & -\eye \end{pmatrix} \begin{pmatrix}
x_{\U} \\
x_{\V}
\end{pmatrix} \ .
\end{flalign}

\begin{lemma}\label{example:admm}
\textbf{(Sharpness of ADMM for linear programming)} 
Suppose \eqref{eq:lp-ADMM} has a solution.
Then there exists a matrix $K'$ and vector $h'$ such that the set of solutions to \eqref{eq:lp-ADMM} are equal to $\{ z : K' z \ge h' \}$.
Let $\zStar \in \ZStar$,
then it holds for any set $S(R, \zStar)=\{z \in Z : \| \zStar - z \| \le R\}$ that the primal-dual problem \eqref{eq:lp-ADMM} is $\alpha$-sharp where $\alpha=\frac{1}{ \max\{ \eta^2, 1/\eta^2 \} H(K')\sqrt{1+4R^2}}$.
\end{lemma}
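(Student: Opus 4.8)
The plan is to mirror the proof of Lemma~\ref{example:lp}, the one new ingredient being that the norm attached to ADMM in Proposition~\ref{prop:assumption-one-holds} is the \emph{semi-norm} $\|z\|^2 = z^\T M z$ with $M = \mathrm{diag}(0, \eta \V^\T \V, \tfrac{1}{\eta}\eye)$; here the constraint $x_U - x_V = 0$ gives $\U = \eye$, $\V = -\eye$, so $\V^\T\V = \eye$ and $\|z\|^2 = \eta\|x_V\|_2^2 + \tfrac{1}{\eta}\|y\|_2^2$, which discards the $x_U$ block entirely. First I would record the polyhedral description of $\ZStar$: exactly as in \eqref{eq:define:K-and-h-for-lp}, standard LP duality lets us write the saddle points of \eqref{eq:lp-ADMM} as those $z$ satisfying primal feasibility ($x_U = x_V$, together with the memberships $x_U \in X_U$, $x_V \in X_V$ that already hold on $Z$), dual feasibility ($c + y \ge 0$ and $y \in \range(\Mat^\T)$), and zero duality gap; collecting these linear (in)equalities produces $K', h'$ with $\ZStar = \{z : K'z \ge h'\}$.

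Second, I would establish the ADMM analogue of Lemma~\ref{lem:bound-kkt-error}, a bound of the form $\|(h' - K'z)^+\| \le c(\eta, R)\,\rho_r(z)$ valid on $S(R,\zStar)$. Since \eqref{eq:lp-ADMM} is bilinear, \eqref{eq:bilinear-0} still gives $\Lag(x,\ty) - \Lag(\tx, y) = F(z)^\T(z - \tz)$ with $F(z) = (-y,\ c + y,\ x_U - x_V)$, hence $\rho_r(z) \ge F(z)^\T(z - \tz)/r$ for every $\tz \in \zBall{r}{z}$, where the ball is now taken in the semi-norm. I would then choose explicit feasible perturbations $\tz$ to peel off each residual: moving $\tx_V$ along the violated coordinates of $c + y$ (staying in $X_V$) to extract $\|(-c-y)^+\|$, moving $\ty$ along $-(x_U - x_V)$ to extract $\|x_U - x_V\|$, and moving toward $\zStar$ to extract the gap term, exactly as \eqref{eq:lp-1}--\eqref{eq:lp-3} do. The difference from Lemma~\ref{lem:bound-kkt-error} is purely scaling bookkeeping: a displacement of semi-norm length $r$ costs Euclidean length $r/\sqrt{\eta}$ in $x_V$ and $r\sqrt{\eta}$ in $y$, which is where the first powers of $\max\{\eta, 1/\eta\}$ appear, while the radius $R$ enters the gap term just as before and supplies the $\sqrt{1 + 4R^2}$.

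Third, I would chain the Euclidean Hoffman bound \eqref{eq:hoffman} for $K'$, namely $\dist_2(z, \ZStar) \le H(K')\|(h'-K'z)^+\|$, with the one-sided comparison $\|w\| \le \sqrt{\max\{\eta, 1/\eta\}}\,\|w\|_2$ (which holds since $M$ drops $x_U$ and rescales the remaining blocks), to convert the Euclidean distance into the semi-norm distance that the sharpness definition uses: $\dist(z, \ZStar) \le \sqrt{\max\{\eta,1/\eta\}}\,\dist_2(z, \ZStar)$. Combining the three steps and collecting the $\eta$-factors from the perturbation scaling and from this conversion yields $\dist(z, \ZStar) \le \max\{\eta^2, 1/\eta^2\}\,H(K')\sqrt{1 + 4R^2}\,\rho_r(z)$, which is the claimed sharpness with $\alpha = 1/(\max\{\eta^2,1/\eta^2\}H(K')\sqrt{1+4R^2})$.

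The main obstacle is the semi-norm, which must be dealt with head-on rather than swept aside. Because $M$ ignores the $x_U$ block, the ball $\zBall{r}{z}$ is unbounded in the $x_U$ direction along $\ker \Mat$: if $y \notin \range(\Mat^\T)$ then perturbing $\tx_U$ within $X_U$ (i.e.\ along $\ker \Mat$) costs zero semi-norm yet changes $F(z)^\T(z - \tz)$ without bound, so $\rho_r(z) = +\infty$ and sharpness holds trivially; the finite analysis above is therefore carried out only on the set where $y \in \range(\Mat^\T)$, and I must verify the chosen perturbations stay feasible in $Z$ (respecting $x_V \ge 0$ and $x_U \in X_U$) while having semi-norm at most $r$. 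The compensating fact that makes the Euclidean-to-semi-norm conversion legitimate is that on $\ZStar$ one has $x_U = x_V$, so controlling the $x_V$ and $y$ blocks controls the genuinely relevant part of the distance to $\ZStar$; tracking all of these $\eta$-dependencies consistently to arrive at $\max\{\eta^2, 1/\eta^2\}$ is the remaining delicate bookkeeping.
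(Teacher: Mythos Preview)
Your approach is essentially the paper's. The paper first treats $\eta = 1$ (where the ADMM semi-norm reduces to the Euclidean norm on the $(x_V,y)$ block), establishing sharpness via exactly the two perturbations you outline --- one along $v=(0,(c+y)^{-},x_V-x_U)$ to extract the feasibility residual, one toward $\hat z=(x_V^{\star},x_V^{\star},y)$ to extract the optimality gap --- and then passes to general $\eta$ by applying the comparison $\theta^{-1/2}\|\cdot\|_{1}\le\|\cdot\|_{\eta}\le\theta^{1/2}\|\cdot\|_{1}$ with $\theta=\max\{\eta,1/\eta\}$, once to $\rho_r$ and once to the distance, which is where the $\theta^{2}=\max\{\eta^{2},1/\eta^{2}\}$ arises. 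Your choice to track the $\eta$-dependence inline rather than in two stages is equivalent in content; the paper's organization is a bit cleaner because it decouples the geometric argument from the scaling bookkeeping (and in your write-up the phrase ``first powers of $\max\{\eta,1/\eta\}$'' for the perturbation step is slightly imprecise --- done carefully each of the two conversions contributes a factor $\theta$, not $\sqrt\theta$, under the paper's accounting). Your explicit treatment of the case $y\notin\range(A^{\top})$, where the semi-norm ball is unbounded along $\ker A$ in the $x_U$ block and hence $\rho_r(z)=+\infty$, is a point the paper does not spell out but is indeed needed so that the Hoffman step yields distance to $\ZStar$ rather than to a strictly larger polyhedron.
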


The proof of Lemma~\ref{example:admm} appears in Section~\ref{app:admm-lp-proof} and follows the same structure as the proof of Lemma~\ref{example:lp}.

\section{Restart schemes for primal-dual methods}\label{sec:restarts}

In this section, we present a generic restart scheme for primal-dual methods and show its linear convergence rate for solving sharp primal-dual problems. We discuss two restart schemes: fixed frequency restart and adaptive restarts, where the former requires knowledge of the sharpness constant $\alpha$ while the latter does not.
\begin{algorithm}[]
\SetAlgoLined

 {\bf Input:} An initial solution $z^{0,0}$, a primal-dual algorithm \PrimalDualStep, a step-size $\eta$\;
 Initialize outer loop counter $n\gets 0$\;
 \Repeat{$z^{n,0}$ convergence}{
 \textbf{initialize the inner loop.} inner loop counter $t\gets 0$ \;
 \Repeat{one of the two restart conditions (see texts) holds}{
    $z^{n,t+1}, \tz^{n,t+1}\gets\PrimalDualStep(z^{n,t}, \eta)$\;
    $\bz^{n,t+1}\gets\frac{1}{t+1} \sum_{i=1}^{t+1} \tz^{n,i}= \frac{t}{t+1} \bz^{n,t}+\frac{1}{t+1} \tz^{n,t+1}$
    \label{line:average}\;  \label{line:output-is-average-of-iterates} 
    $t\gets t+1$\;
 }
  \textbf{restart the outer loop.} $\len{n}\gets t$, $z^{n+1,0}\gets \bz^{n,\len{n}}$, $n\gets n+1$\;
 }
 {\bf Output:} $z^{n,0}$.
 \caption{Restarted Primal-Dual Algorithms}
 \label{al:restarted+algorithm}
\end{algorithm}

Algorithm \ref{al:restarted+algorithm} presents our nested-loop restarted primal-dual algorithm. We initialize with $z^{0,0}\in Z$, a suitable primal-dual algorithm, and a step-size of the algorithm $\eta$. At each outer loop, we keep running $\PrimalDualStep$ until one of the restart conditions holds (to be discussed later). More specifically,
at the $t$-th inner loop of the $n$-th outer loop, we call \PrimalDualStep \ to update the solution $z^{n,t}$ and keep track of the target solution $\tz^{n,t}$ as well as the running average $\bz^{n,t}$. At the end of each outer loop, we restart the next outer loop from the output of a primal-dual algorithm, namely, the running average  $\bz^{n,\len{n}}$, and store the length of its inner loops as $\len{n}$. 
Here we propose two restart schemes:

\paragraph{Fixed frequency restarts.} Suppose we know the sharpness constant $\alpha$ of the primal-dual problem and an upper bound on $C$. In this scheme, we break the inner loop and restart the outer loop with a fixed frequency $\tStar$. Namely, we restart the algorithm if
\begin{flalign}\label{define:t-star}
t \ge \tStar := \ceil[\bigg]{ \frac{2 C ( q + 2)}{\alpha \ReducePotentialBy}}\ ,
\end{flalign}
where $C$ and $q$ are the corresponding parameters of a given primal-dual algorithm, as stated in Property \ref{property:sufficient-decay} and \ref{property:iterates-are-close}. The parameter $\ReducePotentialBy \in (0,1)$ controls when a restart is triggered. The value of $\beta$ can be tuned to improve both practical performance and the constants in the theoretical bound.
Concretely, it follows from Proposition \ref{prop:assumption-one-holds}, Proposition \ref{prop:show-iterates-are-close} and Proposition~\ref{coro:weaker-assumption} that
\begin{itemize}
    \item For PPM with $\eta \in (0,\infty)$, $\tStar = \ceil[\bigg]{ \frac{4}{\alpha \ReducePotentialBy \eta}}$;
    \item For PDHG with $\eta \in (0, 1/\sigmaMax{A}]$, $\tStar = \ceil[\bigg]{ \frac{4}{\alpha \ReducePotentialBy \eta }}$;
    \item For EGM with $\eta \in (0,1/\Lip]$, $\tStar = \ceil[\bigg]{ \frac{10}{\alpha \ReducePotentialBy \eta}}$;
    \item For ADMM, $\tStar = \ceil[\bigg]{ \frac{8}{\alpha \ReducePotentialBy}}$.
\end{itemize}

\paragraph{Adaptive restarts.} In practice, it can be non-trivial to compute the sharpness constant $\alpha$ of a given primal-dual problem. Here we propose an adaptive restart scheme that does not need an estimate of $\alpha$. In this scheme, we restart when the normalized duality gap has sufficient decay; more specifically, we restart the algorithm if
\begin{flalign}
\label{eq:adaptive-restart-scheme}
\begin{cases}
\rho_{\| \zind{n,t} - z^{n, 0} \|}(\zind{n,t}) \le \ReducePotentialBy \rho_{\| z^{n, 0} - z^{n-1,0} \|}(z^{n, 0}) & \text{ if } n\ge 1\\
t \ge \len{0} & \text{ if } n = 0
\end{cases}
\end{flalign}
where the first restart interval length $\len{0} \in \N$ can be selected by the user as a hyperparameter of the algorithm.
In practice, one can pick $\len{0} = 1$ for simplicity. 

\medskip
Although the two restarting schemes \eqref{define:t-star} and \eqref{eq:adaptive-restart-scheme} look disconnected, it turns out the restart interval length in \eqref{eq:adaptive-restart-scheme} is upper bounded by $\tStar$, as shown later in Theorem~\ref{thm:adaptive}. Furthermore, although the parameter $\beta \in (0,1)$ seems to play different roles in fixed frequency restart scheme \eqref{define:t-star} and adaptive restart scheme \eqref{eq:adaptive-restart-scheme}, it determines the contraction of the distance to optimal solution set for both schemes.

\medskip

\medskip
The rest of this section presents the linear convergence rate of the two restart schemes stated above. We start from the fixed frequency restart scheme.

Proposition~\ref{fact:avoid-trivial-setting} is useful throughout this section to ignore awkward cases in our proofs when $\| \zind{n,t} - z^{n, 0} \| = 0$. For example, technically sharpness (Definition~\ref{def:sharpness}) is only defined for $r > 0$.

\begin{proposition}\label{fact:avoid-trivial-setting}
If Property~\ref{property:restart-assumption} holds and $\| \zind{n,t} - z^{n, 0} \| = 0$ then $\zind{n,t} = z^{n, 0} \in \ZStar$.
\end{proposition}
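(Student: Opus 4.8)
The plan is to show first that the running average $\zind{n,t}$ is already optimal, and then to transfer this conclusion to the restart point $z^{n,0}$. Within the $n$-th outer loop, $\zind{n,t}$ is exactly the output sequence produced by the base algorithm started from $z^{n,0}$, so I would invoke Property~\ref{subproperty:reduce-potential-function} with the initial point taken to be $z^{n,0}$, giving $\rho_{\| \zind{n,t} - z^{n,0} \|}(\zind{n,t}) \le 2C \| \zind{n,t} - z^{n,0} \| / t$. Specializing to the hypothesis $\| \zind{n,t} - z^{n,0} \| = 0$ collapses both sides: the right-hand side is $0$, and the left-hand side becomes $\rho_0(\zind{n,t})$ by the convention \eqref{eq:rhor-zero}. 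Hence $\rho_0(\zind{n,t}) \le 0$. Since every $\rho_r(z)$ is nonnegative (the maximizing set $\zBall{r}{z}$ contains its own center $z$, which contributes the value $0$), the limsup defining $\rho_0$ is also nonnegative, so in fact $\rho_0(\zind{n,t}) = 0$.

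Next I would convert this vanishing normalized duality gap into optimality. Property~\ref{property:restart-assumption} also guarantees $\zind{n,t} \in Z$, so Proposition~\ref{prop:rho-duality-gap} applies and $\rho_0(\zind{n,t}) = 0$ is equivalent to the primal-dual gap at $\zind{n,t}$ being zero; a point of $Z$ with zero primal-dual gap is a saddle point, i.e. $\zind{n,t} \in \ZStar$. It then remains to pass from $\zind{n,t}$ to the restart point $z^{n,0}$. Using the hypothesis once more, $\dist(z^{n,0}, \ZStar) \le \| z^{n,0} - \zind{n,t} \| = 0$, so $z^{n,0}$ lies at distance $0$ from the (already optimal) point $\zind{n,t}$, which is the content of $\zind{n,t} = z^{n,0} \in \ZStar$.

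The main obstacle is precisely this last transfer step, because $\| \cdot \|$ is in general only a semi-norm, so $\| z^{n,0} - \zind{n,t} \| = 0$ need not by itself force the literal vector identity $z^{n,0} = \zind{n,t}$. For PPM and EGM the semi-norm is Euclidean, and for PDHG Proposition~\ref{fact:pdhg-norm} shows the weighting matrix is positive definite when $\eta \in (0,1/\sigmaMax{A})$; in all three cases $\| \cdot \|$ is a genuine norm, so $\| z^{n,0} - \zind{n,t} \| = 0$ immediately yields $z^{n,0} = \zind{n,t}$ and closes the argument. For ADMM the semi-norm has a nontrivial null space, and the cleanest route is to read the identity modulo that null space, noting that the quantities actually used downstream, namely $\dist(\cdot, \ZStar)$ and $\rho_r$, are governed by the semi-norm, so that $\dist(z^{n,0}, \ZStar) = 0$ is the operative statement. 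I would therefore stress that the two genuinely load-bearing ingredients are the nonnegativity of $\rho_r$ and the degenerate-radius specialization of Property~\ref{subproperty:reduce-potential-function}, and treat the norm-versus-semi-norm distinction as the only point requiring care.
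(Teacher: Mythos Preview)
Your argument is correct and follows exactly the same route as the paper: use Property~\ref{subproperty:reduce-potential-function} with $\|\zind{n,t}-z^{n,0}\|=0$ to get $\rho_0(\zind{n,t})=0$, then invoke Proposition~\ref{prop:rho-duality-gap} to conclude $\zind{n,t}\in\ZStar$. Your discussion of the semi-norm versus norm subtlety is more careful than the paper's own two-line proof, which simply does not address it.
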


\begin{proof}
Property~\ref{property:restart-assumption} implies $\rho_0(\zind{n,t}) = 0$ and therefore by Proposition~\ref{prop:rho-duality-gap} we get $\zind{n,t} \in \ZStar$.
\end{proof}

\begin{theorem}[Fixed frequency restarts]\label{thm:fixed-frequency}
Consider the sequence $\{ z^{n, 0} \}_{i=0}^{\infty}$, $\{ \len{n} \}_{i=1}^{\infty}$ generated by Algorithm~\ref{al:restarted+algorithm} with the fixed frequency restart scheme, namely, we restart the outer loop if \eqref{define:t-star} holds. Suppose the \eqref{eq:pda} satisfies Property \ref{property:restart-assumption}, and the primal-dual problem \eqref{eq:poi-primal-dual} is $\alpha$-sharp on the set $\zBall{R}{z^{0,0}}$ with $R = \frac{q + 2}{1 - \beta} \dist(z^{0,0},\ZStar)$. Then it holds for each outer iteration $n \in \N \cup \{ 0 \}$ that:
\begin{flalign}
\label{eq:fixed-frequency-result}
\dist(z^{n, 0}, \ZStar) \le \ReducePotentialBy^{n} \dist(z^{0,0}, \ZStar)\ .
\end{flalign}
\end{theorem}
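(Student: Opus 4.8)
**The plan is to prove the result by induction on the outer iteration counter $n$, with the key work being a single-outer-loop contraction argument.**

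The base case $n = 0$ is trivial since $\ReducePotentialBy^0 = 1$. For the inductive step, I would assume $\dist(z^{n,0}, \ZStar) \le \ReducePotentialBy^n \dist(z^{0,0}, \ZStar)$ and aim to prove a one-step contraction of the form $\dist(z^{n+1,0}, \ZStar) \le \ReducePotentialBy \dist(z^{n,0}, \ZStar)$, which chains to give the full result. The heart of the matter is thus to show that a single outer loop, which outputs $z^{n+1,0} = \bz^{n, \len{n}}$ after running for exactly $\len{n} = \tStar$ inner steps, shrinks the distance to $\ZStar$ by a factor of $\ReducePotentialBy$.

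To establish the single-loop contraction, I would combine Property~\ref{property:restart-assumption} with the sharpness assumption. Writing $z^0 := z^{n,0}$ and $\bz := z^{n+1,0} = \zind{n, \tStar}$ for brevity within the loop, Property~\ref{subproperty:reduce-potential-function} gives the upper bound $\rho_{\| \bz - z^0 \|}(\bz) \le \frac{2C \| \bz - z^0 \|}{\tStar}$. On the other hand, sharpness on $\zBall{R}{z^{0,0}}$ (which I must check contains $\bz$) gives the lower bound $\alpha \dist(\bz, \ZStar) \le \rho_{\| \bz - z^0\|}(\bz)$. Chaining these yields $\alpha \dist(\bz, \ZStar) \le \frac{2C \| \bz - z^0 \|}{\tStar}$. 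Then I would apply Property~\ref{subproperty:distance-bound}, $\| \bz - z^0 \| \le (q+2)\dist(z^0, \ZStar)$, and substitute the definition $\tStar = \ceil{\frac{2C(q+2)}{\alpha \ReducePotentialBy}} \ge \frac{2C(q+2)}{\alpha \ReducePotentialBy}$ to obtain
\begin{flalign*}
\alpha \dist(\bz, \ZStar) \le \frac{2C (q+2) \dist(z^0, \ZStar)}{\tStar} \le \alpha \ReducePotentialBy \dist(z^0, \ZStar),
\end{flalign*}
which after dividing by $\alpha$ gives exactly the desired contraction $\dist(z^{n+1,0}, \ZStar) \le \ReducePotentialBy \dist(z^{n,0}, \ZStar)$.

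The main obstacle I anticipate is the bookkeeping to justify that sharpness is applicable at every outer loop, since sharpness is only assumed on the fixed ball $\zBall{R}{z^{0,0}}$ centered at the original starting point, whereas the relevant normalized duality gap is evaluated at iterates $\bz = \zind{n,t}$ produced in later loops. I would handle this by showing inductively that all relevant iterates stay within this ball: using Property~\ref{subproperty:distance-bound} together with the inductive distance bound and the triangle inequality, one gets $\| \zind{n,t} - z^{0,0} \| \le \| \zind{n,t} - z^{n,0}\| + \| z^{n,0} - \zStar \| + \| \zStar - z^{0,0} \|$ for an appropriate $\zStar \in \ZStar$, and the geometric decay of the distances sums to the factor $\frac{q+2}{1-\ReducePotentialBy}$ built into $R = \frac{q+2}{1-\ReducePotentialBy}\dist(z^{0,0},\ZStar)$. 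A subtle point is also dealing with the degenerate case $\| \bz - z^0 \| = 0$, where sharpness is undefined for $r = 0$; here Proposition~\ref{fact:avoid-trivial-setting} resolves the issue by forcing $\bz \in \ZStar$, making the contraction hold trivially.
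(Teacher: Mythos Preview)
Your proposal is correct and follows essentially the same approach as the paper: induction on $n$, with the single-loop contraction obtained by chaining sharpness, Property~\ref{subproperty:reduce-potential-function}, Property~\ref{subproperty:distance-bound}, and the definition of $\tStar$. The only difference is in the bookkeeping for ball containment: the paper uses the telescoping bound $\| z^{N+1,0} - z^{0,0} \| \le \sum_{n=0}^{N} \| z^{n+1,0} - z^{n,0} \| \le (q+2)\sum_{n=0}^{N}\beta^{n}\dist(z^{0,0},\ZStar)$, whereas your three-term triangle inequality through a point $\zStar$ does not directly produce the geometric sum (the term $\|\zStar - z^{0,0}\|$ need not equal $\dist(z^{0,0},\ZStar)$ if $\zStar$ is chosen closest to $z^{n,0}$, and vice versa); you would need to telescope through the intermediate restart points $z^{k,0}$ rather than through $\zStar$ to recover the $\frac{q+2}{1-\beta}$ factor cleanly.
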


\begin{proof}
We argue by induction. Note this hypothesis trivially holds for $n = 0$. Suppose the hypothesis
\eqref{eq:fixed-frequency-result} holds for all $n \le N$. Then it holds that
\begin{flalign*}
\| z^{N + 1, 0} - z^{0, 0} \| &\le \sum_{n=0}^N \| z^{n + 1, 0} - z^{n, 0} \| \le (q + 2) \sum_{n=0}^N \dist( z^{n, 0}, \ZStar ) \\
&\le  (q + 2)  \sum_{n=0}^N \beta^{n} \dist( z^{0, 0}, \ZStar ) \le  \frac{q+2}{1 - \beta} \dist( z^{0, 0}, \ZStar ) \ ,
\end{flalign*}
where the first inequality uses triangle inequality, the second inequality is from Property~\ref{subproperty:distance-bound}, the third inequality uses the induction hypothesis, and the last inequality is from bounds on sum of a geometric sequence. This implies $z^{N + 1, 0} \in \zBall{R}{z^{0,0}}$.
It then follows that 
\begin{align}\label{eq:fixed-restart-proof}
\begin{split}
\dist(z^{N+1,0}, \ZStar) &\le \frac{\rho_{\| z^{N+1,0} - z^{N,0} \|}(z^{N+1,0})}{\alpha} \le \frac{2 C \| z^{N+1,0} - z^{N,0} \|}{\alpha \tStar} \\
& \le \frac{2 C (q + 2)}{\alpha \tStar} \dist(z^{N,0}, \ZStar) \le  \beta \dist(z^{N,0}, \ZStar)  \le \beta^{N+1} \dist(z^{0,0}, \ZStar) \ ,
\end{split}
\end{align}
where the first inequality uses $\alpha$-sharpness of the primal-dual problem by choosing $r=\|z^{N+1,0}-z^{N,0}\|$ and $z=z^{N + 1, 0} \in \zBall{R}{z^{0,0}}$ in \eqref{eq:sharpness}, the second inequality utilizes Property~\ref{subproperty:reduce-potential-function}, the third inequality is from Property~\ref{subproperty:distance-bound} by noticing $z^{N+1,0}=\bz^{\tStar}$, and the fourth inequality comes from the definition of $\tStar$. This finishes the proof by induction.
\end{proof}

\begin{remark}
As a direct consequence of Theorem \ref{thm:fixed-frequency}, we need $O(\frac{C}{\alpha}\log(\frac{1}{\epsilon}))$ total iterations of PDHG to find an approximate solution $z$ such that the distance $\dist(z,Z^*)\le \epsilon$.
\end{remark}

\begin{remark}
We comment the proof logic of Theorem~\ref{thm:fixed-frequency}, and compare it with the standard linear convergence proofs for restart schemes in convex optimization. Consider restarted accelerated gradient descent to minimize a convex, $L$-smooth, and that grows $\mu$-quadratically\footnote{A function $f$ grows $\mu$-quadratically if ${f(z) - \fStar} \ge {\mu}\dist(z, \ZStar)^2$ .} function $f$. The standard analysis of restarted accelerated gradient descent can be written as (see, for example, \cite{nesterov2013gradient})
$$
\dist(z^{n+1, 0}, \ZStar)^2 \le \frac{f(z^{n+1, 0}) - \fStar}{\mu} \le \frac{2L \dist(z^{n+1, 0}, \ZStar)^2}{\mu (\tHatStar)^2} \le \beta^2 \dist( z^{n, 0}, \ZStar)^2 \ ,
$$
where the second inequality uses the $O(1/k^2)$ rate of accelerated gradient descent \cite{nesterov1983method,nesterov2013introductory} when the inner loop iteration is larger than $\tHatStar$, and the last inequality is from $\tHatStar = \frac{1}{\ReducePotentialBy} \sqrt{\frac{ 2L}{\mu}}$. This showcases the (accelerated) linear convergence rate of restarted accelerated gradient descent. Our analysis of Theorem \ref{thm:fixed-frequency} follows from a similar argument: sharpness allows us to transform the sublinear bound on the normalized duality gap $\rho_{\| z^{n+1, 0} - z^{n, 0} \|}(z^{n+1, 0})$ (of a classic primal-dual algorithm) to a constant factor contraction in the distance to optimality after $\tStar$ inner iterations. A major difference is that we are able to evaluate the normalized duality gap in our setting, while in convex optimization, it can be non-trivial to evaluate the optimality gap $f(z^{n+1,0})-\fStar$. Indeed, this feature is crucial as we develop the adaptive restarting scheme.
\end{remark}

\medskip
The next theorem presents the linear convergence rate of the adaptive restart scheme:
\begin{theorem}[Adaptive restarts]\label{thm:adaptive}
Consider the sequence $\{ z^{n, 0} \}_{i=0}^{\infty}$, $\{ \len{n} \}_{i=1}^{\infty}$ generated by Algorithm~\ref{al:restarted+algorithm} with the adaptive restart scheme, namely, we restart the outer loop if \eqref{eq:adaptive-restart-scheme} holds. Suppose \eqref{eq:pda} satisfies Property \ref{property:restart-assumption}. Suppose there exists a set $S\subseteq Z$ such that $z^{n,0}\in S$ for any $n\ge 0$, and the primal-dual problem \eqref{eq:poi-primal-dual} is $\alpha$-sharp on the set $S$. 
Then it holds for each outer iteration $n \in \N$ that
\begin{enumerate}[label=\roman*.]
    \item The restart length, $\len{n}$, is upper bounded by $\tStar$:
   \begin{equation}\label{eq:tau_n}
       \len{n} \le \tStar = \ceil[\bigg]{ \frac{2 C (q+2)}{\alpha \ReducePotentialBy}} \ ;
   \end{equation}
    \item The distance to primal-dual solution set decays linearly:
    $$
    \dist(z^{n, 0}, \ZStar) \le \ReducePotentialBy^{n} \frac{\tStar}{\len{0}}\dist(z^{0,0}, \ZStar)\ .
    $$
\end{enumerate}
\end{theorem}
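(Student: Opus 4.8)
The plan is to treat the two parts in sequence, using the same two ingredients throughout: the sublinear decay and distance bounds of Property~\ref{property:restart-assumption}, and the sharpness lower bound $\alpha \dist(z,\ZStar) \le \rho_r(z)$, valid for $r \in (0,\diam(S)]$. Since $z^{n,0} \in S$ for every $n$, any radius of the form $\|z^{n,0} - z^{n-1,0}\|$ is at most $\diam(S)$, so sharpness will be applicable to the comparison quantity appearing in the adaptive test \eqref{eq:adaptive-restart-scheme}. Throughout I would dispatch the degenerate cases where $\dist(z^{n,0},\ZStar) = 0$ up front: by Proposition~\ref{fact:avoid-trivial-setting} together with non-expansiveness, such an iterate already lies in $\ZStar$ and stays there, making both claims trivial, so I may assume all relevant distances and radii are strictly positive.

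For part i, I would show that the adaptive test is necessarily satisfied once $t$ reaches $\tStar$. Chaining Property~\ref{subproperty:reduce-potential-function} and Property~\ref{subproperty:distance-bound} gives
$$\rho_{\|\zind{n,t} - z^{n,0}\|}(\zind{n,t}) \le \frac{2C\|\zind{n,t} - z^{n,0}\|}{t} \le \frac{2C(q+2)\dist(z^{n,0},\ZStar)}{t}.$$
On the other side, sharpness applied at radius $\|z^{n,0} - z^{n-1,0}\|$ yields $\rho_{\|z^{n,0}-z^{n-1,0}\|}(z^{n,0}) \ge \alpha\dist(z^{n,0},\ZStar)$. Hence for $t \ge \tStar \ge \frac{2C(q+2)}{\alpha\beta}$ the left-hand quantity is at most $\alpha\beta\dist(z^{n,0},\ZStar) \le \beta\,\rho_{\|z^{n,0}-z^{n-1,0}\|}(z^{n,0})$, which is exactly the restart condition; therefore $\len{n} \le \tStar$.

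For part ii, the key is to track the potential $\mu_n := \rho_{\|z^{n,0}-z^{n-1,0}\|}(z^{n,0})$ for $n \ge 1$. Because a restart at outer loop $n$ fires precisely when $\rho_{\|z^{n+1,0}-z^{n,0}\|}(z^{n+1,0}) \le \beta\,\rho_{\|z^{n,0}-z^{n-1,0}\|}(z^{n,0})$ (using $z^{n+1,0} = \zind{n,\len{n}}$), this gives the one-step contraction $\mu_{n+1} \le \beta\mu_n$, hence $\mu_n \le \beta^{n-1}\mu_1$. I would bound the base case $\mu_1$ by applying Property~\ref{subproperty:reduce-potential-function} and Property~\ref{subproperty:distance-bound} to the first outer loop (of length $\len{0}$, with $z^{1,0} = \zind{0,\len{0}}$), obtaining $\mu_1 \le \frac{2C(q+2)}{\len{0}}\dist(z^{0,0},\ZStar)$. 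Finally, sharpness gives $\dist(z^{n,0},\ZStar) \le \mu_n/\alpha$, and combining with $\frac{2C(q+2)}{\alpha} \le \beta\tStar$ (immediate from the definition of $\tStar$) yields
$$\dist(z^{n,0},\ZStar) \le \frac{\beta^{n-1}\mu_1}{\alpha} \le \beta^{n-1}\frac{2C(q+2)}{\alpha\len{0}}\dist(z^{0,0},\ZStar) \le \beta^n\frac{\tStar}{\len{0}}\dist(z^{0,0},\ZStar).$$

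The main obstacle is less in the algebra than in making the logic of the adaptive test rigorous: one must verify that the comparison radius $\|z^{n,0}-z^{n-1,0}\|$ is both positive (so $\rho$ is evaluated at a genuine radius and sharpness applies) and at most $\diam(S)$, and that the restart indeed fires at or before $\tStar$ rather than the inner loop running forever. The degenerate-case bookkeeping via Proposition~\ref{fact:avoid-trivial-setting} is what lets me assume positivity cleanly; once that is in place, the telescoping of $\mu_n$ and the base-case bound on $\mu_1$ are the crux that converts the adaptive, data-dependent restart lengths into the same $\beta^n$ geometric decay as the fixed-frequency scheme.
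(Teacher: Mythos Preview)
Your proposal is correct and follows essentially the same approach as the paper: part~i uses Property~\ref{subproperty:reduce-potential-function} and \ref{subproperty:distance-bound} together with sharpness to show the adaptive test fires by $t=\tStar$ (the paper frames this by contradiction, you state it directly), and part~ii telescopes the restart condition to get $\mu_n \le \beta^{n-1}\mu_1$, bounds $\mu_1$ via Properties~\ref{subproperty:reduce-potential-function}--\ref{subproperty:distance-bound}, and closes with sharpness---exactly as in the paper's chain of inequalities. Your explicit attention to the degenerate radius cases and to $\|z^{n,0}-z^{n-1,0}\|\le\diam(S)$ is a welcome addition that the paper glosses over.
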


\begin{proof}
We prove \eqref{eq:tau_n} by contradiction. If it is not satisfied, there exists $n\ge 1$ such that $\tau^n>\tStar$, and it holds for $t=\tStar$ that 
\begin{align*}
\rho_{\| \zind{n,\tStar} - z^{n, 0} \|}(\zind{n,\tStar}) &\le \frac{2 C \| \zind{n,\tStar} - z^{n,0} \|}{\tStar} \le \frac{2 C}{\alpha \tStar} \frac{\| \zind{n,\tStar} - z^{n,0} \|}{ \dist(z^{n,0}, \ZStar)} \rho_{\| z^{n,0} - z^{n-1,0} \|}(z^{n,0})   \\
&\le \frac{2 C (q + 2)}{\alpha \tStar} \rho_{\| z^{n,0} - z^{n-1,0} \|}(z^{n,0})   \le \ReducePotentialBy \rho_{ \| z^{n,0} - z^{n-1,0} \|}(z^{n,0}) \ , 
\end{align*}
where the first inequality utilizes Property \ref{subproperty:reduce-potential-function}, the second inequality follows from sharpness of the primal-dual problem \eqref{eq:sharpness} by choosing $r=\| z^{n,0} - z^{n-1,0} \|$ and $z=z^{n,0}\in S$, the third inequality utilizes Property~\ref{subproperty:distance-bound}, and the last inequality is from the definition of $\tStar$. 
This is a contradiction: the restart condition \eqref{eq:adaptive-restart-scheme} should have been triggered at $t = \tStar < \len{n}$. Therefore \eqref{eq:tau_n} holds.

For ii., it follows from sharpness condition that
\begin{flalign*}
\dist(z^{n,0}, \ZStar) &\le \frac{\rho_{\| z^{n,0} - z^{n-1,0} \| }(z^{n,0})}{\alpha}  \le \beta^{n-1} \frac{\rho_{\| z^{1,0} - z^{0,0} \|}(z^{1,0})}{\alpha}  \\
&\le \beta^{n-1} \frac{2 C \| z^{1,0} - z^{0,0} \|}{\alpha \len{0}} \le \beta^{n} \frac{2 C (q+2)}{\alpha \ReducePotentialBy \len{0}} \dist( z^{0,0}, \ZStar) \ ,
\end{flalign*}
where the second inequality recursively utilizes the restarting condition \eqref{eq:adaptive-restart-scheme}, the third inequality is from Property~\ref{subproperty:reduce-potential-function} and the last inequality utilizes Property~\ref{subproperty:distance-bound}.
\end{proof}

\begin{remark}[Choice of $\beta$]\label{remark:optimal-beta-choice}
From Theorem~\ref{thm:adaptive} we can obtain
\begin{flalign}\label{eq:alternative-adaptive-bound}
\sum_{i=0}^{n} \len{i} \le \len{0} + \frac{\tStar}{\log(1/\beta)} \left( \log\left( \frac{\tStar}{\len{0}} \frac{\dist(z^{0,0}, \ZStar)}{\dist(z^{n,0}, \ZStar)} \right) \right).
\end{flalign}
If we assume $\frac{\dist(z^{0,0}, \ZStar)}{\dist(z^{n,0}, \ZStar)} \gg \tStar / \len{0}$ and use $\tStar \approx \frac{2 C (q+2)}{\alpha \ReducePotentialBy}$ the right hand side of \eqref{eq:alternative-adaptive-bound} is approximately:
$$
\len{0} + \frac{\ReducePotentialBy}{\log(1/\ReducePotentialBy)} \frac{2 C (q+2)}{\alpha}  \log\left( \frac{\dist(z^{0,0}, \ZStar)}{\dist(z^{n,0}, \ZStar)} \right),
$$
optimizing this with respect to $\beta$ yields $\beta = \exp(-1)$.
\end{remark}

Compared with the assumptions for fixed frequency restarts (Theorem \ref{thm:fixed-frequency}), Theorem \ref{thm:adaptive} additionally requires that $z^{n,0}$ stays in a set $S\subseteq Z$, where the primal-dual problem is sharp. Suppose the feasible region $Z$ is bounded, we can simply choose $S=Z$. However, in the LP example (i.e., Lemma~\ref{example:lp}) where $Z$ can be unbounded, and the primal-dual problem is sharp only on a bounded region. 
Below we show that the iterates of restarted PDHG, PPM, EGM and ADMM stay in a bounded region:

\begin{proposition}\label{thm:pdhg-ppm:bound-adaptive-restart-scheme}
Consider the sequence $\{ z^{n, 0} \}_{i=0}^{\infty}$, $\{ \len{n} \}_{i=1}^{\infty}$ generated by Algorithm~\ref{al:restarted+algorithm} with proper step-size $\eta$ (see Table \ref{tbl:summary-of-C-and-q-values}) and $\beta \in (0,1)$. 
For restarted PDHG or restarted PPM, there exists a constant $R >0$ such that  $z^{n,0} \in \zBall{R}{z^{0,0}}$ for all $n \in \N$.
\end{proposition}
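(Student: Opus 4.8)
The plan is to exploit the special structure of PPM and PDHG, namely that the target solution coincides with the iterate solution ($\tz^{n,t} = z^{n,t}$, as recorded in Proposition~\ref{prop:show-iterates-are-close}). This means the restart point $z^{n+1,0} = \bz^{n,\len{n}} = \frac{1}{\len{n}}\sum_{i=1}^{\len{n}} z^{n,i}$ is a genuine average of the inner iterates, which lets me transfer the non-expansiveness of the base algorithm (Proposition~\ref{prop:generic-nonexpansive}) from the inner iterates to the outer restart points. The essential trick will be to measure non-expansiveness against a single fixed reference solution, rather than the moving nearest point in $\ZStar$, so that the bound survives both the averaging and the restart.

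First I would fix $\zStar \in \argmin_{z\in\ZStar}\|z^{0,0}-z\|$, so that $\|z^{0,0}-\zStar\| = \dist(z^{0,0},\ZStar)$, where $\|\cdot\|$ is the semi-norm associated with the algorithm (Euclidean for PPM, and the $M$-norm for PDHG, which is a genuine norm for $\eta \in (0,1/\sigmaMax{A})$ by Proposition~\ref{fact:pdhg-norm}). Within the $n$-th outer loop the inner loop simply runs the base algorithm starting from $z^{n,0}$, so Proposition~\ref{prop:generic-nonexpansive} gives $\|z^{n,i}-\zStar\| \le \|z^{n,0}-\zStar\|$ for every $i \ge 1$.

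Next I would push this bound through the averaging step. Since $\bz^{n,t} = \frac{1}{t}\sum_{i=1}^t z^{n,i}$ for these two algorithms, the triangle inequality yields $\|\bz^{n,t}-\zStar\| \le \frac{1}{t}\sum_{i=1}^t \|z^{n,i}-\zStar\| \le \|z^{n,0}-\zStar\|$. Taking $t = \len{n}$ and recalling $z^{n+1,0} = \bz^{n,\len{n}}$ gives the one-step contraction $\|z^{n+1,0}-\zStar\| \le \|z^{n,0}-\zStar\|$ toward the \emph{fixed} point $\zStar$. Iterating from $n = 0$ shows $\|z^{n,0}-\zStar\| \le \|z^{0,0}-\zStar\| = \dist(z^{0,0},\ZStar)$ for all $n$, and a final triangle inequality $\|z^{n,0}-z^{0,0}\| \le \|z^{n,0}-\zStar\| + \|\zStar - z^{0,0}\| \le 2\dist(z^{0,0},\ZStar)$ establishes the claim with $R = 2\dist(z^{0,0},\ZStar)$ (taking any positive $R$ in the degenerate case $z^{0,0}\in\ZStar$, where all iterates collapse to $z^{0,0}$).

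The crux, and the reason the statement is restricted to PPM and PDHG, is precisely that non-expansiveness must be invoked against a single fixed $\zStar$ rather than the nearest point in $\ZStar$ to the current iterate; only then does the estimate survive the averaging and carry across restarts. For EGM and ADMM the target $\tz^{n,t}$ differs from the iterate $z^{n,t}$, so the averaged restart point is not an average of non-expansive iterates and this argument breaks down, which is presumably why the proposition excludes them. I do not expect any genuine obstacle beyond bookkeeping: the same norm (the $M$-norm for PDHG) must be used consistently for non-expansiveness, the averaging bound, and the definition of the ball $\zBall{R}{z^{0,0}}$, and norm equivalence (Proposition~\ref{fact:pdhg-norm}) guarantees that boundedness in this norm is equivalent to boundedness in the Euclidean norm.
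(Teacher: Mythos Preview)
Your proposal is correct and follows essentially the same argument as the paper: fix a single $\zStar \in \argmin_{z\in\ZStar}\|z^{0,0}-z\|$, use non-expansiveness of the inner iterates (valid because $\tz^{n,t}=z^{n,t}$ for PPM and PDHG) together with the triangle inequality on the average to get $\|z^{n+1,0}-\zStar\|\le\|z^{n,0}-\zStar\|$, and conclude $R=2\dist(z^{0,0},\ZStar)$. Your emphasis on keeping the reference point fixed is exactly the mechanism the paper relies on.
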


\begin{proof}
For PDHG and PPM, the proof uses the non-expansiveness property of $\tz^{t,n}=z^{t,n}$ (see Proposition \ref{prop:generic-nonexpansive}). In particular, let $\zStar \in \argmin_{z \in \ZStar} \| z - z^{0,0} \|$, then it holds for any $n, t$ that
$$
\| \zStar - \bar{z}^{n,t} \| = \left\| \frac{1}{t} \sum_{i=1}^{t} (\zStar - z^{n,t}) \right\| \le \frac{1}{t} \sum_{i=1}^{t} \| \zStar - z^{n,t} \| \le \frac{1}{t} \sum_{i=1}^{t} \| \zStar - z^{n,0} \| = \| \zStar - z^{n,0} \|\ ,
$$
where the first inequality is from triangle inequality and the second inequality uses Proposition~\ref{prop:generic-nonexpansive}. Thus, we have
$$\| \zStar - z^{n+1,0} \| = \| \zStar - \bar{z}^{n,\len{n}} \| \le  \| \zStar - z^{n,0} \| \ ,$$
whereby $\| \zStar - z^{n,0} \|\le \| \zStar - z^{0,0} \|$ by induction.
It then follows from the triangle inequality that $$\| z^{n, 0} - z^{0,0} \| \le \| z^{0,0} - \zStar \| + \|  \zStar - z^{n, 0} \| \le 2 \| z^{0,0} - \zStar \|=2 \dist(z^{0,0}, \ZStar) \ ,$$ which concludes the proof for PDHG and PPM with $R=2 \dist(z^{0,0}, \ZStar)$.
\end{proof}

\begin{proposition}\label{thm:admm-egm:bound-adaptive-restart-scheme}
Consider the sequence $\{ z^{n, 0} \}_{i=0}^{\infty}$, $\{ \len{n} \}_{i=1}^{\infty}$ generated by Algorithm~\ref{al:restarted+algorithm} with the adaptive restart scheme, namely, we restart the outer loop if \eqref{eq:adaptive-restart-scheme} holds. 
If $0 < \beta <  \frac{1}{q+3}$ then, there exists a constant $R >0$ such that  $z^{n,0} \in \zBall{R}{z^{0,0}}$ for all $n \in \N$ in the following settings:
\begin{itemize}
    \item restarted EGM applied to LP (Lemma~\ref{example:lp});
    \item restarted ADMM applied to LP (Lemma~\ref{example:admm}) .
\end{itemize}
\end{proposition}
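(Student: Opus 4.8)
The plan is to fix a solution $\zStar \in \ZStar$ (say the projection of $z^{0,0}$), track $a_n := \| z^{n,0} - \zStar \|$ together with $d_n := \dist(z^{n,0}, \ZStar)$ in the algorithm's semi-norm, and prove $\sup_n a_n < \infty$ by induction, which is equivalent to the claimed containment in a ball. The starting point is to see why the argument of Proposition~\ref{thm:pdhg-ppm:bound-adaptive-restart-scheme} fails: for PDHG and PPM one has $q=0$, so $\tz = z$ and the restart output $\zind{n,\len{n}}$ is an average of the non-expansive iterates $z^{n,i}$ (Proposition~\ref{prop:generic-nonexpansive}), which immediately gives $a_{n+1} \le a_n$. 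For EGM and ADMM, $q > 0$, and the output averages the \emph{target} iterates $\tz^{n,i}$; by Property~\ref{property:iterates-are-close} these lie within $q\,\dist(z^{n,i},\ZStar)$ of the non-expansive iterates, so the best crude estimate is an \emph{expansion}: combining $\|z^{n+1,0}-z^{n,0}\| \le (q+2) d_n$ from Property~\ref{subproperty:distance-bound} with $d_n \le a_n$ yields $a_{n+1} \le (q+3) a_n$.

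The missing contraction is supplied by the adaptive restart condition \eqref{eq:adaptive-restart-scheme}. Each restart enforces $\rho_{\|\zind{n,\len{n}} - z^{n,0}\|}(\zind{n,\len{n}}) \le \ReducePotentialBy\, \rho_{\|z^{n,0}-z^{n-1,0}\|}(z^{n,0})$, so the normalized duality gaps decrease geometrically, and $\alpha$-sharpness on the region $S$ converts this into geometric decay of the distances $d_n$ (with ratio $\ReducePotentialBy$), exactly as in the proof of Theorem~\ref{thm:adaptive}. I would then telescope $\|z^{n,0}-z^{0,0}\| \le \sum_{k=0}^{n-1}\|z^{k+1,0}-z^{k,0}\| \le (q+2)\sum_{k=0}^{n-1} d_k$, again via Property~\ref{subproperty:distance-bound}; the geometric decay of the $d_k$ makes this series converge, producing a finite $R$ with $z^{n,0} \in \zBall{R}{z^{0,0}}$ for all $n$.

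The main obstacle is that this reasoning is circular: sharpness holds only on a \emph{bounded} region $S$, and for LP the sharpness constant degrades with the radius — $\alpha = \tfrac{1}{H(K)\sqrt{1+4R^2}}$ for EGM on $\zBall{R}{0}$ (Lemma~\ref{example:lp}) and the analogous constant for ADMM on $\zBall{R}{\zStar}$ (Lemma~\ref{example:admm}) — while the displacement bound that pins down $R$ itself depends on $\alpha(R)$. I would break the circularity with a single induction that simultaneously maintains the invariant $z^{k,0} \in S$ for all $k \le n$ and the geometric decay of $d_k$: assuming the invariant up to $n$, apply sharpness on $S$ to bound the next displacement and verify $z^{n+1,0} \in S$. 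The hypothesis $\ReducePotentialBy < \tfrac{1}{q+3}$ enters precisely here, forcing the per-restart contraction from \eqref{eq:adaptive-restart-scheme} to dominate the $(q+3)$ growth factor identified above so that the accumulated displacement stays inside $S$ and a consistent finite $R$ exists. Finally, I would dispatch the two settings separately, since restarted EGM uses the Euclidean norm with $S$ centered at the origin whereas restarted ADMM uses the degenerate semi-norm $M$ with $S$ centered at $\zStar$; Fact~\ref{fact:monotone-sharpness-on-sets} then lets me transfer sharpness between nested balls when reconciling the center of $S$ with the initial point $z^{0,0}$.
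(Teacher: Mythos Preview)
You correctly diagnose the circularity and correctly intuit that the condition $\beta < \tfrac{1}{q+3}$ must balance the crude expansion $a_{n+1}\le (q+3)a_n$. But the resolution you propose---fix a set $S=\zBall{R}{\cdot}$, maintain $z^{k,0}\in S$ and geometric decay of $d_k$ by induction---does not close. With a fixed $S$ the sharpness constant is $\alpha(R)$, and Theorem~\ref{thm:adaptive}'s argument gives $d_n\le \beta^{n-1}\tfrac{2C(q+2)}{\alpha(R)\len{0}}d_0$; telescoping via Property~\ref{subproperty:distance-bound} yields
\[
\|z^{n+1,0}-z^{0,0}\|\;\le\;(q+2)\,d_0\Bigl(1+\tfrac{2C(q+2)}{\alpha(R)(1-\beta)}\Bigr).
\]
For LP, $\alpha(R)\sim \tfrac{1}{H(K)\,2R}$ (Lemma~\ref{example:lp}), so the right-hand side is $\Theta(R)$ with leading coefficient $\tfrac{4C(q+2)^2 H(K)\,d_0}{1-\beta}$. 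The self-consistency inequality $\text{RHS}\le R$ therefore reduces to a condition on $d_0=\dist(z^{0,0},\ZStar)$ and the problem constants, not on $\beta$; no choice of $R$ works once $d_0$ is large. In particular, the hypothesis $\beta<\tfrac{1}{q+3}$ plays no role in your induction, because with a fixed $S$ the decay ratio of $d_n$ is $\beta$, not $\beta(q+3)$.

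The paper avoids the fixed-$S$ trap altogether (Lemma~\ref{lem:bound-adaptive-restart-scheme}). It first extracts from Lemmas~\ref{example:lp} and \ref{example:admm} a \emph{global pointwise} bound $\rho_r(z)\ge \tfrac{a}{1+\|z\|}\dist(z,\ZStar)$, so sharpness degrades linearly with $\|z\|$ rather than being tied to a pre-chosen ball. Separately, and without invoking any sharpness, it proves the a~priori expansion $\|z^{n,0}-z^{0,0}\|\le \tfrac{(q+3)^{n+1}}{2}d_0$ directly from Property~\ref{subproperty:distance-bound}, while the restart condition alone gives $\rho_{\|z^{n,0}-z^{n-1,0}\|}(z^{n,0})\le \beta^{n-1}2C(q+2)d_0$. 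Feeding the first two into the pointwise sharpness bound at $z^{n,0}$ yields $\|z^{n+1,0}-z^{n,0}\|\lesssim (\beta(q+3))^n$, and \emph{this} is where $\beta(q+3)<1$ enters: the exponential blow-up of $1/\alpha$ along the trajectory is exactly cancelled by the $\beta^n$ decay of $\rho$, leaving a summable series. The missing idea in your plan is to let the sharpness constant vary with the iterate and to pair it with an unconditional (exponentially growing) a~priori bound on $\|z^{n,0}\|$, rather than trying to trap the iterates in a fixed ball from the start.
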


The proof of Proposition \ref{thm:admm-egm:bound-adaptive-restart-scheme} is more technical than Proposition~\ref{thm:pdhg-ppm:bound-adaptive-restart-scheme} because $\tz^{t,n}=z^{t,n}$ no longer holds, and we defer it to Appendix~\ref{sec:proof-of-bound-adaptive-restart-scheme}.

\section{Tight convergence results for bilinear problems}\label{sec:tightness}

Bilinear problems are a special case of LP where there are no nonnegativity or inequality constraints. 
Moreover, in many situations the asymptotic behaviour of primal-dual methods applied to LP is equivalent to applying the method to a bilinear problem.
In particular, if after a certain number of iterations the set of active variables\footnote{The active variables are variables not at their bounds.} does not change, then the subset of the matrix, objective, and right hand side corresponding to this active set defines such a bilinear subproblem.
Recent results~\cite{liang2018local,lewis2018partial} show that for many primal-dual algorithms, including PDHG and ADMM, if the problem is non-degenerate\footnote{In the LP case \eqref{eq:LP-Lagrangian}, non-degeneracy means that the algorithm converges to a primal-dual solution that satisfies strict complimentary.} then after a finite number of iterations the active set freezes.  See~\cite{lewis2018partial} for a differential geometry argument for this phenomenon. Thus, the convergence of methods for bilinear problems can characterize their asymptotic behavior on nondegenerate LP problems.

Already we have some basic results in this setup. In particular, Lemma~\ref{example:bilinear}, and Proposition~\ref{fact:pdhg-norm}
imply that the primal-dual problem is $\Omega(\sigmaMin{A})$-sharp on bilinear problems with respect to the PDHG norm for $\eta \le 1/(2\sigmaMax{A})$. Combining the sharpness result with Table~\ref{tbl:summary-of-C-and-q-values}, and Theorem~\ref{thm:fixed-frequency} (or Theorem~\ref{thm:adaptive} with a small loss in the log term) implies that after
\begin{flalign*}
O\pran{\frac{\sigmaMax{A}}{ \sigmaMin{A}}\log\pran{\frac{1}{\epsilon}}}
\end{flalign*}
total matrix-vector multiplications restarted PDHG finds a point $z^{n, 0}$ satisfying
$$
\frac{\dist(z^{n, 0}, \ZStar)}{\dist(z^{0,0}, \ZStar)} \le \epsilon\ .
$$
The remainder of this section proceeds as follows. 
First we present a lower bound showing that, up to a constant factor, restarted PDHG yields the best convergence rates for solving bilinear problems.
Next, we show tight convergence bounds for the average and last iterate of PDHG which we use to demonstrate our restarted PDHG convergence rate represents a strict improvement over the latter two methods. 
Finally, we compare the performance of other methods from literature for solving bilinear problems.

\subsection{Lower bounds for primal-dual algorithms}\label{sec:lower-bounds}

In this subsection, we present a lower bound for solving a sharp primal-dual problem \eqref{eq:poi-primal-dual} demonstrating that restarting gives the best possible worst-case convergence bound for a large class of primal-dual methods. 
First, we review related lower bounds for first-order unconstrained function minimization.

\begin{definition}\label{def:span-respecting-unconstrained}
An algorithm is span-respecting for a minimization problem $\min_{x \in \R^{m}} f(x)$ if
\begin{flalign*}
x^{t} &\in x^{0} + \Span{\grad f(x^{i}) : \forall i \in \{1, \dots, t - 1 \}} 
\end{flalign*}
for all $t \in \N$.
\end{definition}

Span-respecting algorithms cover a large number of unconstrained optimization algorithms including gradient descent, conjugate gradient, and accelerated gradient descent.
As is well-known one can form a lower bound using convex quadratics for these types of algorithms
as given in Theorem~\ref{thm:classic-lb}.

\begin{theorem}[Theorem 2.1.12 of \cite{nesterov2013introductory}]\label{thm:classic-lb}
For all $\gamma_{\max} > \gamma_{\min} > 0$, $m \in \N$, there exists a positive definite matrix $H \in \R^{m \times m}$ and vector $h \in \R^{m}$ such that $\sigma_{\max}(H) = \gamma_{\max}$, $\sigma_{\min}(H) = \gamma_{\min}$, and any span-respecting unconstrained minimization algorithm for solving 
\begin{flalign}\label{eq:unconstrained-quadratic-min}
\min_{x \in \R^{m}} \frac{1}{2} x^\T H x + h^\T x.
\end{flalign}
satisfies for $t < m$ that
$$
\dist(x^{t}, \XStar ) \ge \left( 1 - \sqrt{\frac{\sigma_{\min}(H)}{\sigma_{\max}(H)}} \right)^{t} \dist(x^{0}, \XStar ) \ .
$$

\end{theorem}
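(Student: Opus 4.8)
The plan is to reduce the statement to a single worst-case quadratic whose Hessian is tridiagonal, and then to exploit the fact that a tridiagonal matrix enlarges the support of a vector by at most one coordinate per matrix--vector product. First I would translate so that $x^{0} = 0$ (replacing $x$ by $x - x^{0}$ merely shifts $h$). I would then take $H$ to be a scaled, shifted discrete Laplacian: the symmetric tridiagonal matrix with constant diagonal and constant off-diagonals, with the scaling and shift chosen so that its spectrum fills $[\gamma_{\min}, \gamma_{\max}]$ and attains both endpoints. Finally I would set $h$ to be a nonzero multiple of the first coordinate vector $e_{1}$, so that the initial gradient $\nabla f(0) = h$ points purely along $e_{1}$.

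The heart of the argument is a support-growth claim: every span-respecting method satisfies $x^{t} \in \Span{e_{1}, \dots, e_{t}}$. I would prove this by induction on $t$, the base case being $x^{0} = 0$. For the inductive step, assume $x^{i} \in \Span{e_{1}, \dots, e_{i}}$ for all $i \le t$. Since $H$ is tridiagonal, $H x^{i}$ is supported on $\{1, \dots, i+1\}$, hence $\nabla f(x^{i}) = H x^{i} + h \in \Span{e_{1}, \dots, e_{i+1}} \subseteq \Span{e_{1}, \dots, e_{t+1}}$. The span-respecting property (Definition~\ref{def:span-respecting-unconstrained}) then forces $x^{t+1} \in x^{0} + \Span{e_{1}, \dots, e_{t+1}} = \Span{e_{1}, \dots, e_{t+1}}$, completing the induction.

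With the lemma established, I would compute the unique minimizer $x^{\star} = -H^{-1} h$ by solving $H x^{\star} = -h$ coordinate by coordinate. For indices $i \ge 2$ the right-hand side vanishes, giving a homogeneous second-order linear recurrence whose decaying root is $q = \frac{\sqrt{\kappa}-1}{\sqrt{\kappa}+1}$ with $\kappa = \gamma_{\max}/\gamma_{\min}$, so that $x^{\star}_{i}$ decays geometrically like $q^{i}$. Because $x^{t}$ vanishes on coordinates $i > t$, I obtain $\dist(x^{t}, \XStar)^{2} = \|x^{t} - x^{\star}\|^{2} \ge \sum_{i > t} (x^{\star}_{i})^{2}$, whereas $\dist(x^{0}, \XStar)^{2} = \|x^{\star}\|^{2} = \sum_{i \ge 1} (x^{\star}_{i})^{2}$. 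Taking the ratio and summing the two geometric series gives $\dist(x^{t}, \XStar) \ge q^{t} \dist(x^{0}, \XStar)$. This is the sharp constant; it matches the stated bound up to the precise algebraic form of the base and yields the same $\Omega(\sqrt{\kappa}\log(1/\epsilon))$ iteration lower bound used later in the paper.

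The step I expect to be most delicate is the finite-dimensional bookkeeping. In $\R^{m}$ the exact minimizer is not purely $q^{i}$ because of the boundary at coordinate $m$, so the clean geometric tail-sum must be justified only in the regime $t < m$, where the right boundary has not yet influenced the first $t+1$ coordinates. One also has to confirm that a genuinely finite tridiagonal matrix can be chosen to attain the extreme eigenvalues $\gamma_{\min}$ and $\gamma_{\max}$ exactly, rather than merely approaching them as $m \to \infty$. Beyond these points the construction is entirely standard, and since the statement is quoted as Theorem 2.1.12 of \cite{nesterov2013introductory}, reproducing this worst-case-quadratic argument is all that is required.
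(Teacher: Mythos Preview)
The paper does not prove this theorem: it is quoted verbatim as Theorem~2.1.12 of \cite{nesterov2013introductory} and used as a black box in the reduction that yields Corollary~\ref{coro:primal-dual-lb}. There is therefore no ``paper's own proof'' to compare against.

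Your sketch is exactly the classical construction from \cite{nesterov2013introductory}: tridiagonal (shifted discrete Laplacian) Hessian, $h$ supported on $e_1$, the one-coordinate-per-step support-growth induction, and the geometric tail-sum for $\|x^t - x^\star\|$. That argument is correct and is precisely what the citation points to. You are also right that the sharp base from the construction is $q = (\sqrt{\kappa}-1)/(\sqrt{\kappa}+1)$, which is slightly smaller than the $1 - 1/\sqrt{\kappa}$ appearing in the statement here; the paper has simply recorded the weaker (but cleaner) form, which suffices for the $\Omega(\sqrt{\kappa}\log(1/\epsilon))$ conclusion it needs. The two caveats you flag---boundary effects at coordinate $m$ when $t$ is close to $m$, and whether the finite tridiagonal matrix attains $\gamma_{\min}$ and $\gamma_{\max}$ exactly---are genuine technicalities in Nesterov's original proof as well; the statement as quoted (for $t<m$, with ``there exists'' $H$) absorbs them.
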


Our goal is to supply a similar result to Theorem~\ref{thm:classic-lb} but in the primal-dual setting.

\begin{definition}\label{def:primal-dual-span-respecting}
An algorithm is span-respecting for a unconstrained primal-dual problem $\max_{y \in \R^{n}} \min_{x \in \R^{n}} \Lag(x, y)$ if:
\begin{flalign*}
x^{t} &\in x^{0} + \Span{\grad_{x} \Lag(x^{i},y^{j}) : \forall i , j \in \{1, \dots, t - 1 \}} \\
y^{t} &\in y^{0} + \Span{\grad_{y} \Lag(x^{i},y^{j}) : \forall i \in \{1, \dots, t \}, j \in \{1, \dots, t - 1 \}} \ .
\end{flalign*}
\end{definition}

Definition~\ref{def:primal-dual-span-respecting} is analogous to Definition~\ref{def:span-respecting-unconstrained} but in the primal-dual setting. 
If $\Lag$ is bilinear then
with appropriate indexing of their iterates, primal-dual algorithms including primal-dual hybrid gradient, extragradient and their restarted variants satisfy 
Definition~\ref{def:primal-dual-span-respecting}.
Corollary~\ref{coro:primal-dual-lb} provides a lower bound on span-respecting primal-dual algorithms. The proof is by reduction to Theorem~\ref{thm:classic-lb}.

\begin{corollary}\label{coro:primal-dual-lb}
Let $\| \cdot \|$ be the Euclidean norm.
For all $\alpha, C \in (0, \infty)$, $m \in \N$,
there exists a matrix $A \in \R^{m \times m}$ and vector $b \in \R^{m}$ such that  
$$
\max_{y \in \R^{m}} \min_{x \in \R^{m}} \Lag(x,y) = y^\T A x + b^\T y
$$
is $\alpha$-sharp on $\R^{m \times m}$, $C$-smooth and any span-respecting unconstrained primal-dual algorithm satisfies
$$
\dist(z^{t}, \ZStar ) \ge \left( 1 - \frac{\alpha}{C} \right)^{t} \dist(z^{0}, \ZStar )
$$
for $t < m$.
\end{corollary}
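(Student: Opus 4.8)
The plan is to reduce the statement to the classical quadratic lower bound of Theorem~\ref{thm:classic-lb}. The key trick that turns the square-root rate $1 - \sqrt{\sigma_{\min}(H)/\sigma_{\max}(H)}$ there into the rate $1 - \alpha/C$ here is to take the bilinear matrix $A$ to be a \emph{square root} of the hard Hessian. Concretely, I would first dispense with the case $C \le \alpha$, where $1 - \alpha/C \le 0$ makes the bound trivial (any $\alpha$-sharp, $C$-smooth instance works). For $C > \alpha$, I invoke Theorem~\ref{thm:classic-lb} with $\gamma_{\min} = \alpha^2$ and $\gamma_{\max} = C^2$ to obtain a positive definite $H \in \R^{m \times m}$ and $h \in \R^{m}$ with $\sigma_{\min}(H) = \alpha^2$, $\sigma_{\max}(H) = C^2$, and the associated Krylov lower bound; then I set $A = H^{1/2}$ (the symmetric positive definite square root) and $b = A^{-1} h$, so that $H = A^{\T} A$, the unique saddle point is $z^{\star} = (x^{\star}, 0)$ with $x^{\star} = -H^{-1} h$, and $\sqrt{\sigma_{\min}(H)/\sigma_{\max}(H)} = \alpha/C$. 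By translation I also assume without loss of generality that $z^{0} = 0$, so in particular $y^{0} = 0 = y^{\star}$.

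The two regularity claims then follow directly. By Lemma~\ref{example:bilinear} (whose argument is insensitive to the sign of the bilinear term and to $c = 0$) the problem is $\sigmaMin{A}$-sharp on $\R^{m \times m}$, and since $A$ is invertible its minimum nonzero singular value is $\sigmaMin{A} = \alpha$. For smoothness, note $F(z) = (A^{\T} y,\, -(A x + b))$ is affine with Jacobian $\twomatrix{0}{A^{\T}}{-A}{0}$, whose spectral norm equals $\sigmaMax{A} = C$; hence $F$ is $C$-Lipschitz, i.e.\ the problem is $C$-smooth.

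The heart of the argument is to show the primal iterates are trapped in the Krylov subspace generated by $H = A^{\T} A$. Writing $\hat{x}^{t} = x^{t} - x^{\star}$ and using $y^{\star} = 0$, the gradients reduce to $\grad_{x} \Lag(x^{i}, y^{j}) = A^{\T} y^{j}$ and $\grad_{y} \Lag(x^{i}, y^{j}) = A \hat{x}^{i}$, so Definition~\ref{def:primal-dual-span-respecting} gives $\hat{x}^{t} \in \hat{x}^{0} + \Span{A^{\T} y^{j} : j < t}$ and $y^{t} \in \Span{A \hat{x}^{i} : i \le t}$. Alternating these two inclusions and tracking indices, a short induction (using $y^{0} = 0$) yields $\hat{x}^{t} \in \hat{x}^{0} + \Span{H^{k} \hat{x}^{0} : 1 \le k \le t - 1}$. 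This is exactly the subspace that confines the iterates of any span-respecting minimization algorithm (Definition~\ref{def:span-respecting-unconstrained}) applied to $q(x) = \tfrac{1}{2} x^{\T} H x + h^{\T} x$ from the matching starting point $x^{0} = 0$, since there $\grad q(x^{i}) = H(x^{i} - x^{\star})$. Because the conclusion of Theorem~\ref{thm:classic-lb} is in fact a lower bound on $\dist(\cdot, \XStar)$ for every point of this Krylov subspace, it applies verbatim to $x^{t}$, giving $\dist(x^{t}, x^{\star}) \ge (1 - \alpha/C)^{t} \dist(x^{0}, x^{\star})$.

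To finish, since $\ZStar = \{(x^{\star}, 0)\}$ and $y^{0} = 0$, we have $\dist(z^{t}, \ZStar)^{2} = \|x^{t} - x^{\star}\|^{2} + \|y^{t}\|^{2} \ge \dist(x^{t}, x^{\star})^{2}$ while $\dist(z^{0}, \ZStar) = \dist(x^{0}, x^{\star})$, so the primal bound lifts to the claimed bound on $z^{t}$. The hard part will be the bookkeeping in the inductive Krylov argument together with justifying that Theorem~\ref{thm:classic-lb} certifies a lower bound for \emph{every} point of $\hat{x}^{0} + \Span{H^{k} \hat{x}^{0}}$, not merely for the formal iterate produced by a minimization method; this holds because any such point is realizable as the $t$-th iterate of some span-respecting minimization algorithm, but it must be stated carefully to make the reduction airtight.
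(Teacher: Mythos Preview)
Your proposal is correct and follows essentially the same route as the paper: take $A=H^{1/2}$ with $H$ the hard Hessian from Theorem~\ref{thm:classic-lb} at parameters $\gamma_{\min}=\alpha^2$, $\gamma_{\max}=C^2$, then show the primal iterates of any span-respecting primal-dual method lie in the Krylov space generated by $H$, and invoke Theorem~\ref{thm:classic-lb}.

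One small remark on the concern you raise at the end. The paper avoids the ``every point of the Krylov subspace'' issue by arguing slightly differently: rather than showing $\hat{x}^t\in \hat{x}^0+\Span{H^k\hat{x}^0}$ and then appealing to realizability, it shows directly that the sequence $(x^t)$ satisfies Definition~\ref{def:span-respecting-unconstrained} for the quadratic $q$, i.e.\ $x^t\in x^0+\Span{\nabla q(x^0),\dots,\nabla q(x^{t-1})}$ using the gradients at the \emph{actual} iterates. This follows immediately from your two inclusions $x^t\in x^0+\Span{A^\T y^j:j<t}$ and $y^t\in\Span{A\hat{x}^i:i\le t}$ once you substitute one into the other, and it lets you apply Theorem~\ref{thm:classic-lb} without the extra realizability step.
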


\begin{proof}
Consider initial solution $y^0 = x^0 = \mathbf{0}$, where $A$ and $b$. Then, for any span-respecting primal-dual method satisfies
\begin{subequations}
\begin{flalign*}
x^{t} &\in \Span{\Mat^\T y^{0}, \dots, \Mat^\T y^{t}} \\
y^{t} &\in \Span{\Mat x^{0} - b, \dots, \Mat x^{t-1} - b} \ ,
\end{flalign*}
\end{subequations}
thus it holds that
\begin{flalign}\label{span:Axt}
x^{t} &\in \Span{\Mat^{\T} (\Mat x^{0} - b), \dots, \Mat^\T (\Mat x^{t-1} -  b)}\ .
\end{flalign}
Let $\gamma_{\max} = C^2$, $\gamma_{\min} = \alpha^2$,  and consider the positive definite matrix $H$ and vector $h$ supplied by Theorem~\ref{thm:classic-lb}.
Define $A = H^{1/2}, \quad b = H^{-1/2} b$. Note $H^{1/2}$ exists and is positive definite as $H$ is positive definite. Also, note that the unique solution to the primal-dual problem is $\xStar = A^{-1} b$ and $\yStar = A^{-1} \mathbf{0} = \mathbf{0}$ with $\xStar$ matching the optimal solution to \eqref{eq:unconstrained-quadratic-min}.
Then by \eqref{span:Axt} we have 
$x^{t} \in \Span{H x^{0} - h, \dots, H x^{t-1} -  h}$
which implies $x^{t}$ is span-respecting for \eqref{eq:unconstrained-quadratic-min}.
Applying the iteration bound of Theorem~\ref{thm:classic-lb} and using that $\yStar = y^{0} = \mathbf{0}$ yields the result.
\end{proof}

Therefore we have established that
the convergence bounds of our restart schemes (Theorem~\ref{thm:fixed-frequency}
and \ref{thm:adaptive}) matches the lower bound (Corollary~\ref{coro:primal-dual-lb}) up to a constant.
In other words, a restarted method gives the best worst-case bounds.

\subsection{Tight bounds for the average and last iterate}\label{sec:unconstrained-bilinear}

Previous section shows that our restart schemes match the lower bound (up to a constant) for solving sharp primal-dual problems. 
However, at this point it is unclear if standard approaches, e.g., the last and average iterate do not also obtain this worst-case bound with a careful analysis for sharp primal-dual problems. In this section, we show for bilinear problems that the last iterate and the average iterate of PDHG cannot match convergence bounds of restarted PDHG. Such statement is genuinely the case for other primal-dual first-order methods with a similar arguments.  

Simple empirical tests support the hypothesis that the restarted PDHG is geninuely faster than the average or last iterate.
In particular, Figure \ref{fig:bilinear-iterates} plots the last iterate ($z_t$) and the average iterate ($\bz_t$) of both non-restarted PDHG (i.e., PDHG without restarting) and restarted PDHG (with fixed frequency) for solving a simple two-dimensional bilinear problem $\Lag(x,y)=x y$ with step-size $\eta=0.2$. As we can see, the average solution of the restarted algorithm has faster convergence to the primal-dual solution $(0,0)$ than the average or last iterate of PDHG.

\begin{figure}
    \centering
    \includegraphics[height=250pt]{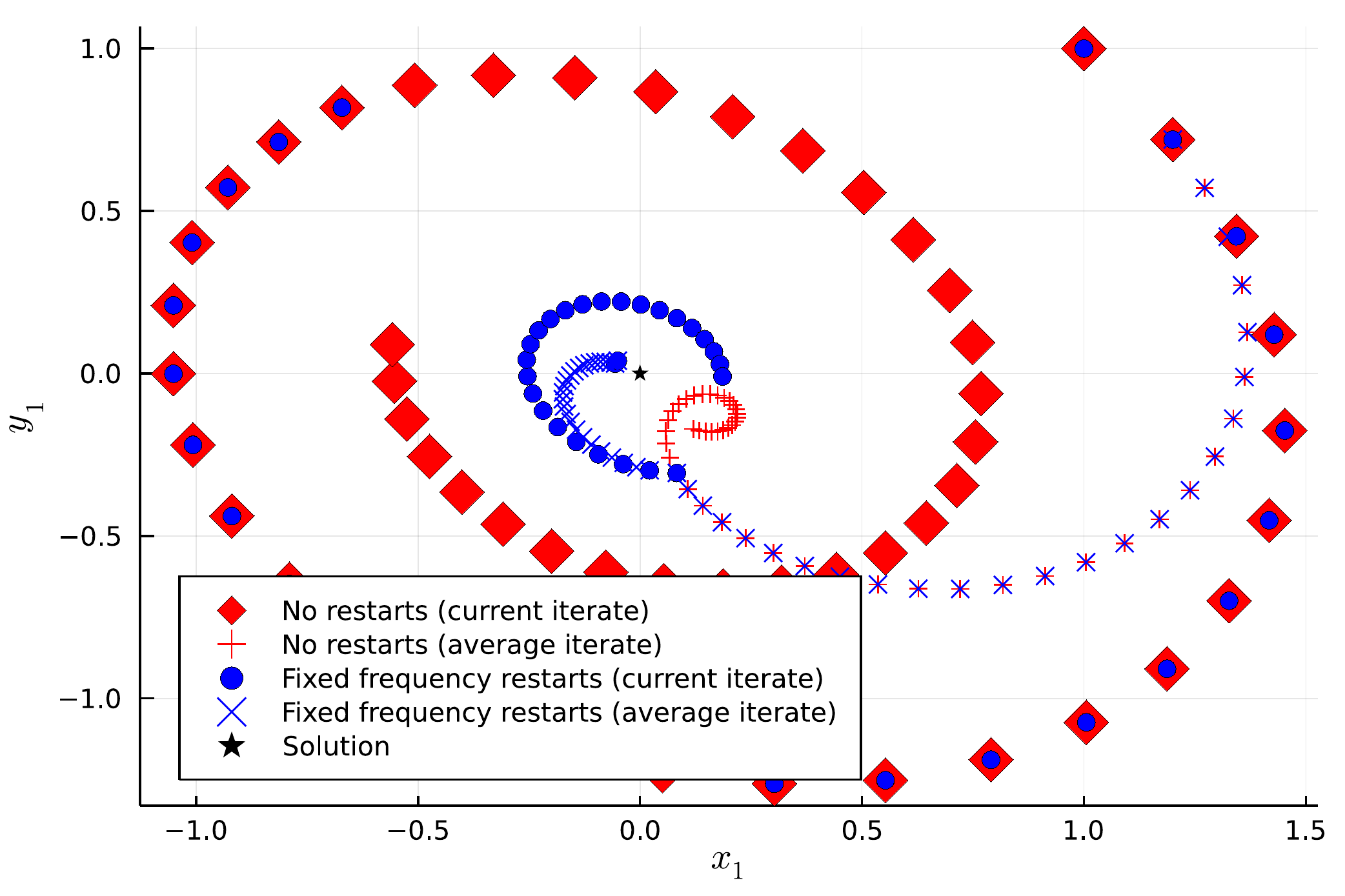}
    \caption{Plot of the first $50$ iterates \blue{of non-restarted and restarted PDHG} for a simple two-dimensional bilinear problem $\Lag(x,y) = x y$ with $\eta = 0.2$. The restart length is for the fixed frequency restarts is $25$. The unique optimal solution is $(0,0)$.
    }
    \label{fig:bilinear-iterates}
\end{figure}

We can formalize this improvement from restarted PDHG.
Table \ref{tab:complexity-simple} compares the complexity of the last iterate of PDHG, the average iterate of PDHG, and restarted PDHG on bilinear problems. From the table we can see that only the last iterate achieves linear convergence and its linear convergence constant is a square root factor worse than restarted PDHG. 
We expect similar results would also apply to other primal-dual algorithms, such as EGM.
The remainder of this subsection is devoted to proving the results in Table~\ref{tab:complexity-simple}. 

\begin{table}[]
\centering
\begin{tabular}{c c c}
\toprule
Last Iterate & Average Iterate & Restarted  \\ 
\midrule
 $\Theta\left( \left( \frac{\sigmaMax{A}}{ \sigmaMin{A}} \right)^2 \log\left( \frac{1}{\epsilon} \right)  \right)
$ & $\Theta\left( \frac{\sigmaMax{A}}{\sigmaMin{A} \epsilon} \right)$ &
$\Theta\left( \frac{\sigmaMax{A}}{\sigmaMin{A}} \log\left( \frac{1}{\epsilon} \right) \right)$  \\
\bottomrule
\end{tabular}
\caption{Comparison of the convergence of the last iterate, average iterate, and restarted PDHG on bilinear problems with $\eta = 1/(2 \sigmaMax{A})$. The algorithms terminate when $\frac{\|z^t-\zStar\|_{2}^2}{\|z^0-\zStar\|_{2}^2} \le \epsilon$.
We also assume $\epsilon \in (0, 1/2]$. Additionally for the average iterate lower bound we assume $\epsilon \le \sigmaMin{A}^2 / \sigmaMax{A}^2$.}
\label{tab:complexity-simple}
\end{table}

Consider an bilinear problem with $\Lag(x,y)=y^{\T} A x$. Without loss of generality, we can assume $A$ is a diagonal matrix with entries $0<\sv_1\le ... \le \sv_m$ and $x,y\in\R^m$ (See the beginning of the proof of Theorem~6 in Applegate et.\ al \cite{applegate2021infeasibility} for an explanation.)  

The unique optimal solution to this primal-dual problem is $(0,0)$. PDHG with step-size $\eta\le \frac{1}{\sv_m}$ has iterate update
\begin{equation}\label{eq:PDHG_simple}
    z^{t+1} = \twomatrix{1}{-\eta \Mat^\T}{\eta \Mat}{1-2\eta^2 \Mat \Mat^\top} z^t\ ,
\end{equation}
where $z^t=\vectorr{x^t}{y^t}$. We study the convergence of three sequences: the last iterates of PDHG ($z^t$), the average iterates of PDHG ($\bz^{K}=\frac{1}{K}\sum_{t=1}^{K} z^t$) 
and the solution $\bz^{n,t}$ obtained in our restart scheme with PDHG as a subroutine. 

Notice the PDHG dynamic \eqref{eq:PDHG_simple} is separable over coordinates, and there are $m$ independently evolving $2$-d linear dynamical systems given by:
\begin{equation}\label{eq:2d-dynamic}
    z^{t+1}_i = P_i z^{t}_i\ ,
\end{equation}
for $i = 1, \dots, m$ where $z_i=\vectorr{x_i}{y_i}$and $P_i=\twomatrix{1}{-\eta\sv_i}{\eta\sv_i}{1-2\eta^2\sv_i^2}$. Let $P_i=Q_i\PEV_i Q^{-1}_i$ be the eigen-decomposition of $P_i$, where 
$$
\PEV_i = \begin{pmatrix} 
1-\eta^2 \sv_i^2 - \ii \eta \sv_i \sqrt{1-\eta^2 \sv_i^2} & \\
& 1-\eta^2 \sv_i^2 + \ii \eta \sv_i \sqrt{1-\eta^2 \sv_i^2}\ ,
\end{pmatrix}
$$ 
$$
Q_i = \begin{pmatrix}
\eta \sigma_i - \ii \sqrt{1 - \eta^2 \sigma_i^2} & \eta \sigma_i + \ii \sqrt{1 - \eta^2 \sigma_i^2} \\
1 & 1 
\end{pmatrix} \ ,
$$
and $\ii$ denotes the imaginary part of a complex number.

Moreover,
with $\tilde{z}_i^t = Q_i^{-1} z_i^{t}$, $B_i = (Q_i^{-1})^\CT Q_i^{-1}$ and $\| v \|_M = \sqrt{v^\CT M v}$, where $\CT$ is the conjugate transpose, we get
\begin{flalign}
    \| z^t_i- \zStar_i \|_{B_i}^2 &= \| z_i^t \|_{B_i}^2 = \| Q_i^{-1} z_i^t \|_{2}^2 = \|\hz_i^t\|^2_2 = \| \PEV_i^t \hz_i^{0} \|^2_2 =  (1-\eta^2 \sv_i^2)^t \| \hz_i^{0} \|_2^2 = (1-\eta^2 \sv_1^2 )^t \|z^{0}_i-\zStar_i\|_{B_i}^2 \ , \label{eq:lin-converge}
\end{flalign}
where the second to last equality uses
\begin{flalign}\label{eq:lambda-i-abs}
\abs{\Pev_i^{\pm}}^2 = (1-\eta^2 \sv_i^2)^2 + \eta^2 \sv_i^2 (1-\eta^2 \sigma_i^2) = 1-\eta^2 \sv_i^2\ .
\end{flalign}

Moreover, note that
$$
Q_i Q_i^{\CT} = \begin{pmatrix}
\eta \sigma_i - \ii \sqrt{1 - \eta^2 \sigma_i^2} & \eta \sigma_i + \ii \sqrt{1 - \eta^2 \sigma_i^2} \\
1 & 1 
\end{pmatrix} \begin{pmatrix}
\eta \sigma_i + \ii \sqrt{1 - \eta^2 \sigma_i^2} & 1 \\
\eta \sigma_i - \ii \sqrt{1 - \eta^2 \sigma_i^2} & 1 
\end{pmatrix} = 2 \begin{pmatrix}
 1 & \eta \sigma_i \\
\eta \sigma_i & 1
\end{pmatrix}
$$
with eigenvalues $2 \pm 2 \sv_i \eta $.
Therefore, as $\eta \in [0, 1/(2 \sv_m)]$
we get $1 \le 2 \pm 2 \sv_i \eta \le 3$, which implies
\begin{flalign}\label{eq:B_i-approx-identity}
\| v \|_2^2 \le v^\CT Q_i Q_i^{\CT} v \preceq 3 \| v \|_2^2 \Rightarrow \frac{\| v \|_2^2}{3} \preceq v^\CT B_i v  \preceq \| v \|_2^2
\end{flalign}
for all $v \in \mathcal{C}^{n}$ where $\mathcal{C}$ is the set of complex numbers and the implication uses that $B_i Q_i Q_i^\CT = \eye$.

By \eqref{eq:B_i-approx-identity} we get $\frac{1}{3} \|z^t-\zStar\|_{2}^2 \le \sum_{i=1}^{n} \| z_i - z_i \|_{B_i}^2 \le \|z^t-\zStar\|_{2}^2$ and therefore \eqref{eq:lin-converge} implies that we reach a solution $z^t$ such that 
$$
\frac{\|z^t-\zStar\|_{2}^2}{\|z^0-\zStar\|_{2}^2} \le \epsilon \ ,
$$
within $O\pran{\pran{\frac{1}{\eta \sv_1}}^2 \log\pran{\frac{1}{\epsilon}}}$ iterations.

From \eqref{eq:B_i-approx-identity} and \eqref{eq:lin-converge} we deduce that
$$
\| z^t_1 - \zStar_1 \|_{2}^2 \ge \frac{1}{3} (1 - \eta^2 \sigma_1^2)^t \| z^t_1 - \zStar_1 \|_{2}^2\ .
$$
Therefore by $\sv_1 \le \sv_m$ and $\eta = 1 / (2 \sv_m)$ we get
$\Omega\left( \pran{\frac{\sv_m}{\sv_1}}^2 \log\pran{\frac{1}{\epsilon}} \right)$ as a lower bound for the last iterate convergence of PDHG.

Define $\| M \|_2 := \sup_{\| v \|_2 = 1} \| M v \|_2$ be the $\ell_2$ norm of a matrix $M$. Now we study the solution of the average iterates $\bz^K=\frac{1}{K}\sum_{t=1}^{K} z^t$, and we claim $\bz^K$ converges to the unique optimal solution $\zStar=0$ with sublinear complexity $\Omega(\frac{\sigma_m}{\sigma_1 \epsilon})$. Notice that
\begin{align}\label{eq:average-simple}
\begin{split}
    \norm{\bz_i^K -\zStar_i}_{B_i} &= \norm{\frac{1}{K}\sum_{t=1}^{K} z^t_i}_{B_i} \\
    &= \norm{\frac{1}{K}\sum_{t=1}^{K} \hz^t_i}_{2} \\
    &= \frac{1}{K}\norm{\sum_{t=1}^{K} \PEV_i^t \hz_i^{0}}_{2} \\
    &= \frac{1}{K}\norm{\PEV_i (\eye-\PEV_i)^{-1} (\eye-\PEV_i^{K})  \hz_i^{0}}_2 \\
    &\le \frac{1}{K}\norm{\PEV_i}_2 \norm{(\eye-\PEV_i)^{-1}}_2 \norm{\eye-\PEV_i^K}_2 \norm{  \hz^{0}}_2 \\
    &\le \frac{1}{K (\eta\sv_1)}\norm{\PEV_i}_2\pran{\norm{\eye}_2+\norm{\PEV_i^K}_2} \norm{  \hz^{0}_i}_2 \\
    &\le \frac{2}{K \eta\sv_1} \norm{  \hz^{0}_i }_2 \\
    &= \frac{2}{K \eta\sv_1} \norm{z^{0}_i -\zStar_i}_{B_i} \ ,
\end{split}
\end{align}
where the second inequality uses \begin{flalign}\label{eq:one-minus-gamma}
\abs{1 - \Pev^{\pm}_i} = \sqrt{\eta^4 \sigma_i^4 + \eta^2 \sigma_i^2 (1 - \eta^2 \sigma_i^2)} = \eta \sigma_i
\end{flalign}
and the third inequality utilizes $\|\PEV_i\|_2\le 1$. Setting $\eta = \frac{1}{2 \sv_m}$ and using \eqref{eq:B_i-approx-identity} demonstrates $\bz^t$ converges to $\zStar$ in $O\pran{\frac{\sv_m}{\sv_1 \epsilon}}$ iterations, which is consistent with the ergodic rate of PDHG.

We will now establish a matching $\Omega\pran{\frac{\sv_m}{\sigma_1 \epsilon}}$ lower bound for the average iterate. Observe that
\begin{flalign}
\abs{ 1 - (\Pev_i^{\pm})^K } &\ge  1 - \abs{\Pev_i^{\pm}}^K \notag \\
&\ge1 - (1 - \eta^2 \sigma_i^2 )^{K/2} \notag\\
&\ge 1 - \frac{1}{1 + (K/2) \eta^2 \sigma_i^2} \notag \\
&\ge \frac{K \eta^2 \sigma_i^2}{2 + K \eta^2 \sigma_i^2}\ . \label{eq:bound-gamma-to-the-K}
\end{flalign}
where the third inequality uses the fact that $(1 - a)^b \le \frac{1}{1 + b a}$ for $a \ge 0$, $b \ge 0$ (to see this note that $f(a,b) = (1 - a)^b (1 + b a)$ satisfies $f(0,b) = 1$ and $\frac{\partial f}{\partial a} \le 0$).
Therefore,
\begin{flalign*}
\norm{\bz_1^K -\zStar_1}_{B_1}  &= \frac{1}{K}\norm{\PEV_1(\eye-\PEV_1)^{-1} (\eye-\PEV_1^K)  \hz_1^{0}}_2 \\
&= \frac{1}{K} \sqrt{ (\hx_1^{0})^2 \abs{\Pev_1^{-}}^2 \abs{1 - \Pev_1^{-}}^{-2} \abs{1 - (\Pev_1^{-})^K}^2 + (\hy_1^{0})^2 \abs{\Pev_1^{+}}^2 \abs{1 - \Pev_1^{+}}^{-2} \abs{1 - (\Pev_1^{+})^K}^2 } \\
&\ge \frac{1}{K} \| \hz_1^{0} \|_2 \abs{\Pev_1^{\pm}} \abs{1 - \Pev_1^{\pm}}^{-1} \left( 1 - \abs{\Pev_1^{\pm}}^K \right) \\
&\ge \frac{\| \hz^{0}_1 \|_2}{K} \frac{\sqrt{1-\eta^2 \sv_1^2}}{\eta \sigma_1} \times \frac{K \eta^2 \sigma_1^2}{2 + K \eta^2 \sigma_1^2} \\
&\ge \frac{\| \hz^{0}_1 \|_2}{K}  \frac{\sqrt{3/4}}{\eta \sigma_1} \times \frac{K \eta^2 \sigma_1^2}{2 + K \eta^2 \sigma_1^2} \\
&=  \frac{\sqrt{3}}{2} \times \frac{\eta \sigma_1 \| z^{0}_1 - \zStar_1 \|_{B_1}}{2 + K \eta^2 \sigma_1^2} \ ,
\end{flalign*}
where the second equality uses that the absolute value is completely multiplicative, the second inequality uses \eqref{eq:lambda-i-abs}, \eqref{eq:one-minus-gamma} and \eqref{eq:bound-gamma-to-the-K} and the final inequality uses $\eta = \sv_m / 2$. This establishes the lower bound for the average iterate. We note that this lower bound only matches the upper bound for $K = \Omega(\frac{1}{\eta^2 \sv_1^2})$. We believe matching the upper bound across all $K$ is possible but the analysis appears to be much more technical.

\subsection{Comparison with other methods}\label{sec:comparision-other-methods}

Mokhtari et al. \cite{mokhtari2020unified} study EGM and the optimistic gradient descent, in this setting they show an
$$
O\left( \frac{\sigma_{\max}(A^{\T} A)}{\sigma_{\min}(A^{\T} A)} \log( 1/\varepsilon ) \right) = O\left( \left( \frac{\sigmaMax{A}}{\sigma_{\min}(A)} \right)^2 \log( 1/\varepsilon  ) \right)
$$
bound for both methods on bilinear problems (recall $\sigma_{\min}(A^{\T} A) = \sigma_{\min}(A)$ and $\sigma_{\max}(A^{\T} A) = \sigmaMax{A}$). As expected, this matches the last iterate performance of PDHG.

We can use other methods to solve bilinear problems. 
One approach is reduce the bilinear game to a pair of nonsmooth optimization problems:
\begin{flalign*}
\min_{x \in \R^{m}}&~ \| A x - b \|_2 \\
\min_{y \in \R^{m}}&~ \| A^{\T} y - c \|_2\ .
\end{flalign*}
Each of these problems is $\sigmaMin{A}$-sharp (Lemma~\ref{prop:singular-value}) and $\sigmaMax{A}$-Lipschitz. Therefore according to \cite[Corollary 9]{yang2018rsg} restarted subgradient method requires 
$$
O\left( \left( \frac{\sigmaMax{A}}{\sigmaMin{A}} \right)^2 \log(1 / \varepsilon ) \right)
$$
iterations on each problem to find an approximate solution satisfying $\frac{\| A x - b \|_2}{\| A x\ind{0} - b \|_2} \le \varepsilon$ and $\frac{\| A^{\T} y - c \|_2}{\| A^{\T} y\ind{0} - c \|_2} \le \varepsilon$.
Another approach is to reformulate solving bilinear game as a pair of smooth unconstrained problems:
\begin{flalign*}
\min_{x \in \R^{m}}&~ \| A x - b \|_2^2 \\
\min_{y \in \R^{m}}&~ \| A^{\T} y - c \|_2^2\ .
\end{flalign*}
One can apply conjugate gradient or accelerated gradient descent to each of these subproblems.
The well-known convergence bounds for these methods \cite{nesterov2013introductory} imply that they have the following iteration bounds
$$
O\left( \sqrt{\frac{\sigma_{\max}(A^{\T} A)}{\sigma_{\min}^{+}(A^{\T} A)}} \log( 1/\varepsilon ) \right) = O\left( \frac{\sigmaMax{A}}{\sigmaMin{A}} \log( 1/\varepsilon  ) \right)\ ,
$$
which matches the bounds we obtained by restarted PDHG. We also point the reader to Gilpin et al. \cite{gilpin2012first}, who obtain similar results to ours in the LP (with bounded feasible region) and bilinear setting. 
The main difference is that they modify Nesterov’s smoothing algorithm for nonsmooth convex optimization \cite{nesterov2005smooth}, which is a primal method, whereas we combine restarts with primal-dual methods to achieve our result.

\section{Efficient computation of the normalized duality gap}\label{sec:efficent_comp}

\subsection{Separable norm}\label{sec:general-case-computation}

This subsection shows that the normalized duality gap can be computed efficiently under the mild assumption that the norm is separable:
$$
\| (x,y) \|^2 = \| x \|^2 + \| y \|^2\ .
$$ 
To evaluate \eqref{eq:rhorz-pos} we need to find a solution to
\begin{flalign}\label{eq:trust-region}
\max_{\tz \in \zBall{r}{z}} \Lag(x, \ty) - \Lag(\tx, y)\ .
\end{flalign}
Recall that due to Proposition~\ref{fact:avoid-trivial-setting} solving \eqref{eq:rhor-zero} is unnecessary for adaptive restarts.

Solving \eqref{eq:trust-region} can be done using
standard methods which we briefly outline here.
By the assumption that $\Lag$ is convex-concave and by duality, there exists a $\lambda \in [0,\infty]$ such that \eqref{eq:trust-region} is equivalent to
\begin{flalign}\label{eq:joint-prox-problem}
\max_{(\tx,\ty) \in Z} \Lag(x, \ty) - \Lag(\tx, y) - \frac{\lambda}{2} ( \| \tx - x \|^2 + \| \ty - y \|^2)\ .
\end{flalign}
For any $\lambda \in (0,\infty)$, define
$$
h(\lambda) := \| \bz(\lambda) - z \| - r \ ,
$$
where $\bz(\tau)$ is the unique solution to \eqref{eq:joint-prox-problem} (the solution is unique because $\Lag$ is convex-concave which means \eqref{eq:joint-prox-problem} is strongly concave). Observe the function $h(\lambda)$ is nonincreasing.
If $h(\lambda) < 0$ for all $\lambda \in (0,\infty)$ then  the solution to \eqref{eq:joint-prox-problem}  with $\lambda = 0$ that is closest to $z$ solves \eqref{eq:trust-region} (and can be approximated by picking an arbitrarily tiny $\lambda$).
If $h(\lambda) > 0$ for all $\lambda \in (0,\infty)$ then  $r = 0$, and the optimal solution to \eqref{eq:trust-region} is $\tz = z$.
The final possibility is that there exist $\lambda, \lambda' \in (0,\infty)$ such that $h(\lambda) < 0 < h(\lambda')$ in which case there exists a root of $h$ which we can find using a standard root finding methods such as the bisection method.
This root corresponds to the optimal solution of \eqref{eq:trust-region}.

Problem \eqref{eq:joint-prox-problem} is separable into
\begin{subequations}\label{eq:split-prox-problem}
\begin{flalign}
\argmin_{\tx \in X} \Lag(\tx,y) + \frac{\lambda}{2} \| \tx - x \|^2 \\
\argmin_{\ty \in Y} -\Lag(x, \ty) + \frac{\lambda}{2} \| \ty - y \|^2 \ ,
\end{flalign}
\end{subequations}
which is equivalent to the subproblem iterations for PPM, PDHG, and ADMM (Section~\ref{sec:unified-pda}).
Even if this root finding only requires evaluating \eqref{eq:split-prox-problem} a small number of times it could significantly increase the cost of running the algorithm. Therefore, it makes sense to only test condition \eqref{eq:adaptive-restart-scheme} at fixed intervals with large separation, e.g., every 200 iterations.

\subsection{Primal-dual hybrid gradient}\label{pdhg:practical-details}

Section~\ref{sec:general-case-computation} considered the case
where $\| \cdot \|$ is seperable. However, the (natural) norm that we previously
used to analyze PDHG is
$\| z \| = \sqrt{ \| x \|_2^2 - 2 \eta y^{\T} A x + \| y \|_2^2}$
which is not separable.
Fortunately, the following Corollary shows that
Property~\ref{property:restart-assumption} also holds for PDHG with the
Euclidean norm. This allows us the possibility of computing $\rho_r$ with $\| \cdot \|$ being the
Euclidean norm and retaining our theoretical guarantees.
Indeed this is how we implement the adaptive restart scheme for PDHG in
Section~\ref{sec:experimental-results}.

\begin{corollary}\label{coro:pdhg-with-euclidean-norm}
For PDHG with $\eta \in (0, 1/\sigmaMax{A})$,
Property~\ref{property:restart-assumption} holds when $\| \cdot \|$ is the
Euclidean norm,
$C = \frac{2}{\eta (1 - \eta \sigmaMax{A})}$ and
$q = 4 \frac{1 + \eta \sigmaMax{A}}{1 - \eta \sigmaMax{A}}$.
\end{corollary}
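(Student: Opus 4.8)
The plan is to transfer the already-established Property~\ref{property:restart-assumption} for PDHG in its natural norm $\|z\|_M^2 = z^\T M z$ over to the Euclidean norm, paying the norm-equivalence factors supplied by Proposition~\ref{fact:pdhg-norm}. I would write $a = 1 - \eta\sigmaMax{A}$ and $b = 1 + \eta\sigmaMax{A}$, so that Proposition~\ref{fact:pdhg-norm} reads $a\|z\|_2^2 \le \|z\|_M^2 \le b\|z\|_2^2$, and note $ab = 1 - (\eta\sigmaMax{A})^2 \le 1$. Throughout I use that for PDHG the target and iterate coincide, $\tz^t = z^t$ (Proposition~\ref{prop:show-iterates-are-close}), so $\zind{t}$ is the Euclidean average $\frac1t\sum_{i=1}^t z^i$.

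For Property~\ref{subproperty:reduce-potential-function}, I would start from the ergodic bound of Proposition~\ref{prop:sublinear-rate}, which holds in the $M$-norm with constant $1/\eta$, and convert its right-hand side to Euclidean via $\|z - z^{0}\|_M^2 \le b\|z - z^0\|_2^2$, obtaining $\Lag(\bx^t, y) - \Lag(x, \by^t) \le \frac{b\|z - z^0\|_2^2}{2\eta t}$ for all $z \in Z$. Then I repeat the ball-containment step from the proof of Proposition~\ref{coro:weaker-assumption}, now with Euclidean balls: for $r = \|\zind{t} - z^0\|_2 > 0$ the triangle inequality gives $\zBall{r}{\zind{t}} \subseteq \zBall{2r}{z^0}$, whence $\rho_r(\zind{t}) \le \frac{1}{r}\cdot\frac{b(2r)^2}{2\eta t} = \frac{2br}{\eta t}$. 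It then remains to check $\frac{2b}{\eta} \le 2C = \frac{4}{\eta a}$, which reduces to $ab \le 2$ and therefore holds since $ab \le 1$. The degenerate case $r = 0$ is handled as before: $\rho_r(\zind{t}) \le 2br/(\eta t)$ together with $\rho_r \ge 0$ forces $\rho_0(\zind{t}) = 0$.

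For Property~\ref{subproperty:distance-bound}, I would take $\zStar \in \argmin_{z \in \ZStar}\|z - z^0\|_2$ and use the non-expansiveness of Proposition~\ref{prop:generic-nonexpansive}, which holds in the $M$-norm: $\|z^i - \zStar\|_M \le \|z^0 - \zStar\|_M$. Sandwiching both sides by the norm equivalence yields $\|z^i - \zStar\|_2 \le \frac{1}{\sqrt a}\|z^i - \zStar\|_M \le \sqrt{\tfrac{b}{a}}\,\|z^0 - \zStar\|_2 = \sqrt{\tfrac{b}{a}}\,\dist(z^0, \ZStar)$, and then the triangle inequality gives $\|z^i - z^0\|_2 \le (\sqrt{b/a} + 1)\dist(z^0, \ZStar)$. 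Averaging over $i = 1, \dots, t$ and using convexity of the norm bounds $\|\zind{t} - z^0\|_2$ by the same quantity, so it suffices that $\sqrt{b/a} + 1 \le q + 2 = 4\frac{b}{a} + 2$; writing $s = \sqrt{b/a} > 1$ this is $s \le 4s^2 + 1$, which holds comfortably.

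The constants in the statement are deliberately generous, so no single step is genuinely delicate; the one point requiring care is that both objects appearing in Property~\ref{property:restart-assumption} --- the normalized duality gap (through the ball in its definition) and the distance to $\ZStar$ --- are norm-dependent, and the single tool that is only available in the $M$-norm, namely the non-expansiveness of the iterates, must be converted to Euclidean statements while keeping $a$ and $b$ on the correct sides of each inequality. Verifying that the resulting factors ($\frac{2b}{\eta}$ in part i and $\sqrt{b/a}+1$ in part ii) are dominated by the stated $C$ and $q$ is then the only remaining check, and both ultimately reduce to $ab \le 1$.
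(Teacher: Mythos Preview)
Your proof is correct and follows the same high-level strategy as the paper: transfer Property~\ref{property:restart-assumption} from the PDHG norm $\|\cdot\|_M$ to the Euclidean norm via the equivalence in Proposition~\ref{fact:pdhg-norm}. The bookkeeping differs slightly---for part~i the paper converts the Euclidean ball into an $M$-ball and then invokes the $M$-norm version of Property~\ref{subproperty:reduce-potential-function}, whereas you convert the sublinear bound of Proposition~\ref{prop:sublinear-rate} into a Euclidean statement first and do the ball containment directly in $\|\cdot\|_2$; for part~ii the paper takes the $M$-norm projection onto $\ZStar$ while you take the Euclidean one---but both routes are valid and yield intermediate constants comfortably inside the (deliberately loose) stated $C$ and $q$.
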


The proof uses that Property~\ref{property:restart-assumption} holds for the PDHG norm, and Proposition~\ref{fact:pdhg-norm} which establishes that $\| z \|$ is approximately Euclidean.
The proof appears in Appendix~\ref{sec:pdhg-with-euclidean-norm}.

By Proposition~\ref{thm:pdhg-ppm:bound-adaptive-restart-scheme} we also know that the iterates will remain bounded even after this change of norm. Therefore, using Lemma~\ref{example:lp} we have established the premise of Theorem~\ref{thm:adaptive} for PDHG with the Euclidean norm on LP problems.

\subsection{Linear time implementation for linear programming}\label{sec:linear-time-implementation}

This subsection shows that we can more efficiently compute the normalized duality gap in the LP setting.

\subsubsection{Linear time algorithm for linear trust region problems}
\label{sec:linear-time-implementation:trustregion}

The key ingredient to the linear time computation of the normalized duality gap is a linear time algorithm for linear trust region problems.
In particular, consider a trust region problem with a linear objective $g$, center $z$, Euclidean norm, and
lower bounds $l$ (possibly infinite) on the variables:
\begin{flalign}\label{eq:lp-trust-region-problem}
\argmin_{\tz \in \R^{n+m} : l \le \tz, \|\tz - z \| \le r}
g^{\T} \tz\ .
\end{flalign}
If $g_i < 0$ then setting $g_i = -g_i$ and $l_i = -\infty$ will create an equivalent problem satisfying $g_i > 0$. Therefore without loss of generality we can assume $g_i > 0$ for all $i$.

The optimal $\tz$ will be of the form
\begin{flalign}\label{eq:lp-trust-region:max}
\tz(\lambda) & := \max(z - \lambda g, l)\ ,
\end{flalign}
for some $\lambda \in [0,\infty)$, so (\ref{eq:lp-trust-region-problem}) reduces
to the univariate problem
\begin{flalign}\label{eq:lp-trust-region:lambda}
\max \lambda \; : \; \|\tz(\lambda) - z \| \le r\ .
\end{flalign}
Letting $\hat{\lambda}_i$ be the value of $\lambda$ at which $\tz(\lambda)_i$ encounters the bound, that is, $\hat{\lambda}_i := (z_i - l_i) / g_i$, we have
\begin{flalign}\label{eq:lp-trust-region:summation}
\| \tz(\lambda) - z \|^2 & = \sum_{i\;:\;\hat{\lambda}_i \le \lambda} (l_i - z_i)^2 + \lambda^2 \sum_{i\;:\;\hat{\lambda}_i > \lambda} g_i^2\ .
\end{flalign}
This forms the basis for a linear time algorithm: by computing (\ref{eq:lp-trust-region:summation}) for the median value $\lambda_{\text{med}}$ of $\hat{\lambda}_i$ (in linear time~\cite{BFPRT1973}), and comparing it with $r^2$, we determine whether or not the optimal $\lambda$ for (\ref{eq:lp-trust-region:max}) is greater or less than $\lambda_{\text{med}}$.  This lets us eliminate half of the $\hat{\lambda}_i$, collapsing their contributions into the appropriate summand.  Applying this recursively gives a running time recurrence $f(n+m) = O(n+m) + f((n+m)/2)$, i.e., $f(n+m) = O(n+m)$.  Details are in Appendix~\ref{appendix:linear-time-implementation:trustregion}.

\subsubsection{Extragradient and primal-dual hybrid gradient}\label{sec:linear-time-implementation:EGM-and-PDHG}

Recall that for extragradient the norm for defining $\rho_r$ is Euclidean.  
For LP with extragradient (Section~\ref{sec:sharpness-lp}), we use the Lagrangian:
\begin{flalign}
\min_{x \ge 0} \max_{y \in \R^{m}} \Lag(x,y) & = c^{\T} x + y^{\T} b-y^{\T} \Mat x
\end{flalign}
for which the trust region problem (\ref{eq:trust-region}) is equivalent to
\begin{flalign}
\argmin_{\tx \ge 0, \ty \in \R^m : 
\|(\tx, \ty) - (x, y)\| \le r}
(c^{\T} - y^{\T} \Mat) \tx - (b^{\T} - (\Mat x)^{\T}) \ty\ .
\end{flalign}
which is a trust region problem with a linear objective, Euclidean
norm, and lower bounds on variables (the dual lower bounds are $-\infty$).  
Given $y^{\T} \Mat$ and $\Mat x$, this
can be solved in time $O(n+m)$ using the algorithm in
Section~\ref{sec:linear-time-implementation:trustregion}. The same result hold for PDHG with Euclidean norm as described in Corollary~\ref{coro:pdhg-with-euclidean-norm}.

\subsubsection{ADMM}

Recall for ADMM for LP (Lemma~\ref{example:admm}) that
\begin{flalign*}
\min_{x_U\in X_U, x_V\in X_V}\max_{y} ~\Lag(x,y) = c^{\T} x_{V} - y^{\T} \begin{pmatrix} \eye & -\eye \end{pmatrix} \begin{pmatrix}
x_{\U} \\
x_{\V}
\end{pmatrix} \
\end{flalign*}
and $\| \cdot \| = \| \cdot \|_M$ with
$$
M = \begin{pmatrix}
    0 & 0 & 0 \\
    0 & \eta \eye & 0 \\
    0 & 0 & \frac{1}{\eta} \eye
    \end{pmatrix}\ .
$$
Therefore, for $Q_r(x_{V}, y) = \{ (\tx_{V}, \ty) : \tx_{V} \ge 0,
\blue{ \eta \| x_V - \tx_{V} \|_2^2 + 
\frac{1}{\eta} \| y - \ty \|_2^2 } \le r^2 \}$ we have
\begin{flalign}
\rho_r(\bar{z}^t) &= \frac{1}{r} \sup_{\tz \in \zBall{r}{\bar{z}^t}} c \cdot (\bar{x}^t_{V} - \tx_{V}) + \bar{y}^{t} \cdot (\tx_{\U} - \tx_{\V})  - \ty \cdot (\bar{x}^t_{\U} - \bar{x}^t_{\V})  \notag \\
&= \frac{1}{r} \left( \sup_{\hat{x}_{\U} \in X_{U}} \bar{y}^t \cdot \hat{x}_{\U} + \sup_{(\tx_{V}, \ty) \in Q_r(\bar{x}^t_{V}, \bar{y}^t)} c \cdot (\bar{x}^t_{V} - \tx_{V}) - \bar{y}^t \cdot \tx_{\V} - \ty \cdot (\bar{x}^t_{\U} - \bar{x}^t_{\V}) \right) \notag \\
&= \frac{1}{r} \left( \bar{y}^t \cdot \bar{x}^t_{\U} + \sup_{(\tx_{V}, \ty) \in Q_r(\bar{x}^t_{V}, \bar{y}^t)} c \cdot (\bar{x}^t_{V} - \tx_{V}) - \bar{y}^t \cdot \tx_{\V} - \ty \cdot (\bar{x}^t_{\U} - \bar{x}^t_{\V}) \right) \notag \\
&= \blue{\frac{1}{r}  \sup_{(\tx_{V}, \ty) \in Q_r(\bar{x}^t_{V}, \bar{y}^t)} -(\bar{y}^t+c) \cdot (\tx_{V} - \bar{x}^t_{V}) + (\bar{x}^t_{\V} - \bar{x}^t_{\U}) \cdot (\ty - \bar{y}^t)} \ ,  \label{eq:rho-r}
\end{flalign}
where the first equality follows from definition of the normalized duality gap, the second equality uses the definition of $M$ and $\zBall{r}{z}$, and the third equality uses that $\bar{y}^t \cdot \tx_{U} = \bar{y}^t \cdot \bar{x}_U$. To see this claim that $\bar{y}^t \cdot \tx_{U} = \bar{y}^t \cdot \bar{x}_U$ note that $\bar{x}^t_U \in X_{U} = \{ x : A x = b\}$ and that if $\bar{y}^t \cdot \tx_{U} \neq \bar{y}^t \cdot \bar{x}^t_{U}$ for $\tx_{U} \in X_{U}$ then $\bar{x}^t_{U} + \alpha (\tx_{U} - \bar{x}^t) \in X_{U}$ for all $\alpha \in \R$ and either $\lim_{\alpha \rightarrow \infty} \bar{y}^t \cdot ( \bar{x}^t_{U} + \alpha (\tx_{U} - \bar{x}^t) ) = \infty$  or $\lim_{\alpha \rightarrow -\infty} \bar{y}^t \cdot ( \bar{x}^t_{U} + \alpha (\tx_{U} - \bar{x}^t) ) = \infty$, this contradicts that $\rho_r(\bar{z}^t) < \infty$ (Proposition~\ref{coro:weaker-assumption}).

Equation \eqref{eq:rho-r} is again a trust region problem with a linear objective, Euclidean norm, and bounds on the variables, so can be solved in linear time using the algorithm from Section~\ref{sec:linear-time-implementation:trustregion}.

\section{Numerical experiments}\label{sec:experimental-results}

\blue{As validation of the linear convergence rates predicted by Theorems~\ref{thm:fixed-frequency} and \ref{thm:adaptive}, we implemented and tested the proposed restart schemes for PDHG, EGM and ADMM applied to LP.}
Indeed, for our test problems we are able to achieve very high accuracy solutions, commensurate with the accuracy expected from second-order methods on LP problems.
The experiments also show that our adaptive scheme is competitive with the best fixed frequency restart scheme without any need for hyperparameter search. \blue{This section primarily covers the results for PDHG. Additional details and results for EGM and ADMM are presented in Appendix \ref{sec:num-egm-and-admm}. The code for the numerical experiments is publicly available at \url{https://github.com/google-research/google-research/tree/master/restarting_FOM_for_LP}.
}

We select four LP instances, \texttt{qap10}, \texttt{qap15}, \texttt{nug08-3rd}, and \texttt{nug20}, from the Mittelmann collection set~\cite{mittelmann_benchmark}, a standard benchmark set for LP. These four problems are relaxations of quadratic assignment problems~\cite{qapbounds2002}, a classical NP-hard combinatorial optimization problem. 
These instances are known to be challenging for traditional solvers and amenable to first-order methods~\cite{GalabovaHall2020}. The instance sizes are given in Table~\ref{table:test-problem-sizes}.

\newcommand{\PrimalWeight}[0]{\omega}
For the step size of PDHG we use $\eta = 0.9 \sigmaMax{A}$ which ensures the theoretical bounds for PDHG hold, where $\sigmaMax{A}$ is estimated using power iteration.

It is known that a major factor in the practical performance of PDHG is the relative scaling of the primal and dual \cite{goldstein2015adaptive, chambolle2011first}. We call this the primal weight ($\PrimalWeight$). It can be viewed as a scaling factor on the objective value, i.e., setting $c_{\text{new}} = c / \PrimalWeight^2$, or as the relative step size for the primal and dual, i.e., the primal step size is $\eta / \PrimalWeight$ and the dual step size is $\eta \PrimalWeight$.
Before running PDHG with restarts on each of these problems we estimate the optimal primal weight by running the non-restarted PDHG algorithm for $5000$ iterations across the range $\omega \in \{ 4^{-5}, 4^{-4}, 4^{-3}, 4^{-2}, 4^{-1}, 1, 4^{1}, 4^{2}, 4^{3}, 4^{4}, 4^{5} \}$. 
We then choose the $\omega$ value with the
smallest value of 
the KKT error $\| (K z - h)^+ \|_2$ where $K$ and $h$ are defined as per \eqref{eq:define:K-and-h-for-lp}.
We use this $\omega$ value for our experiment across all restart schemes.

\begin{table}[]
    \centering
    \begin{tabular}{c c c c}
\toprule
            & rows  & cols  & nonzeros  \\
\midrule
qap10       & 1820 & 4150 & 18200   \\
qap15       & 6330 & 22275 & 94950  \\
nug8-3rd    & 19728 & 20448 & 139008    \\
nug20       & 15240 & 72600 & 304800  \\
\bottomrule
    \end{tabular}
    \caption{Test problem sizes}
    \label{table:test-problem-sizes}
\end{table}

On these problems we run PDHG with: no restarts, fixed frequency restarts with different restart lengths, and adaptive restarts (where the norm used for computing the normalized duality gap is Euclidean as described in Section~\ref{sec:linear-time-implementation:EGM-and-PDHG}).
Fixed restart lengths of $4^{1}, 4^2, \dots, 4^9$ were tested and the best three restart lengths\footnote{The restart lengths were ordered descending by iterations to find a normalized duality gap below $10^{-7}$, and then by the normalized duality gap at the maximum number of iterations. The top three restart lengths were then selected for display.} were plotted for each problem.
In Figure~\ref{fig:pdhg-experiments}, we plot the value of the normalized duality gap against the number of iterations for each of the method where the input radius for the normalized duality gap is equal to the distance traveled since the last restart (for no restarts this is the starting point).
For the adaptive restarts we use $\ReducePotentialBy = \exp(-1)$ as per Remark~\ref{remark:optimal-beta-choice} and only evaluate \eqref{eq:adaptive-restart-scheme} every 30 iterations to ensure the cost of evaluating the normalized duality gap is negligible.
There are a few consistent patterns.
One is that often a few iterations after a restart, the normalized duality gap spikes and then falls quickly below its previous value. This is particularly apparent for nug20.
Another observation is that after the active set is identified, the linear convergence is often apparent in the plot.
However, for qap10, qap15, and nug20, restarted PDHG outperforms non-restarted PDHG, much earlier than when the active set freezes.

\begin{figure}
    \centering
    \includegraphics[height=150pt]{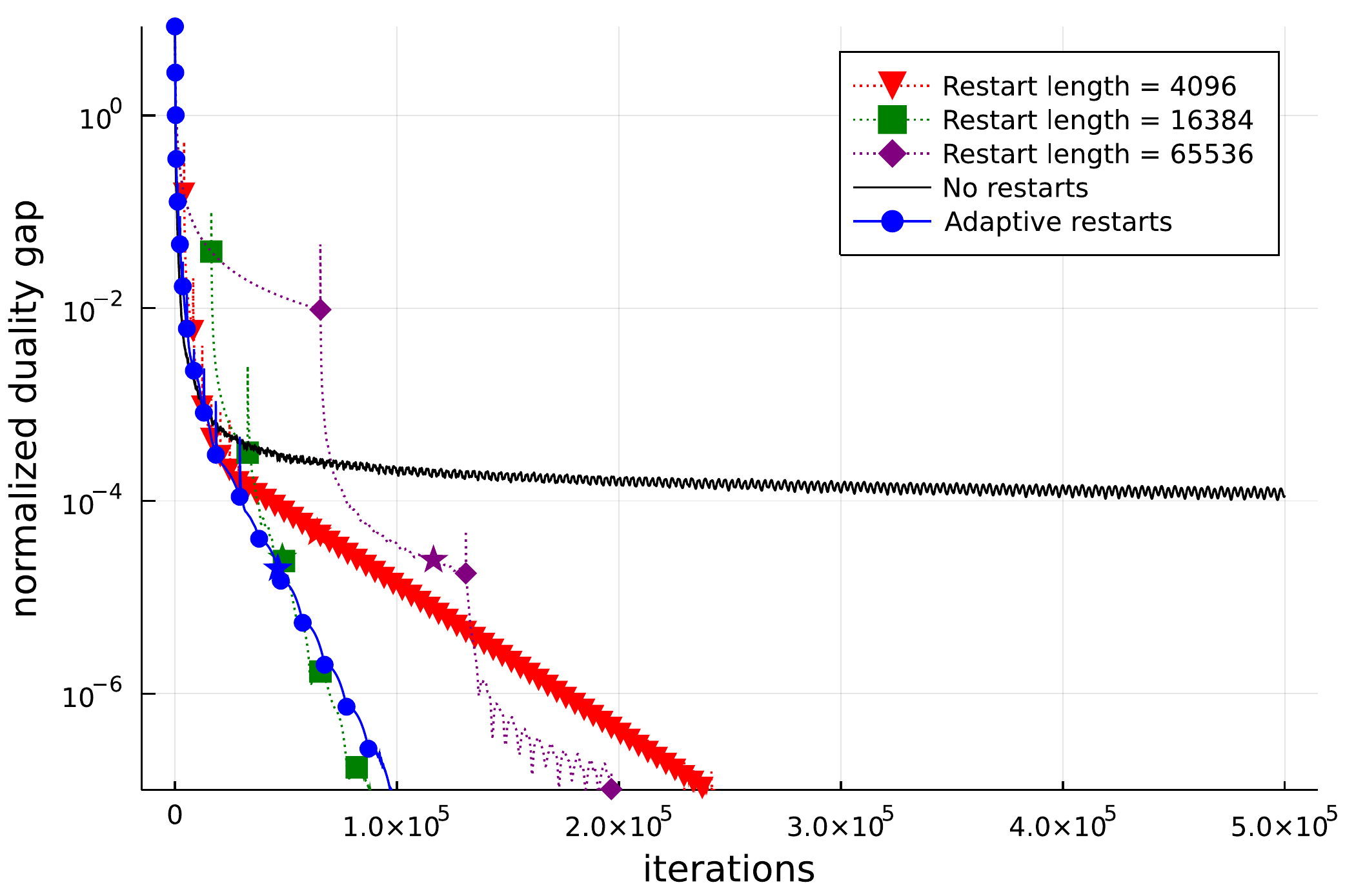}
    \includegraphics[height=150pt]{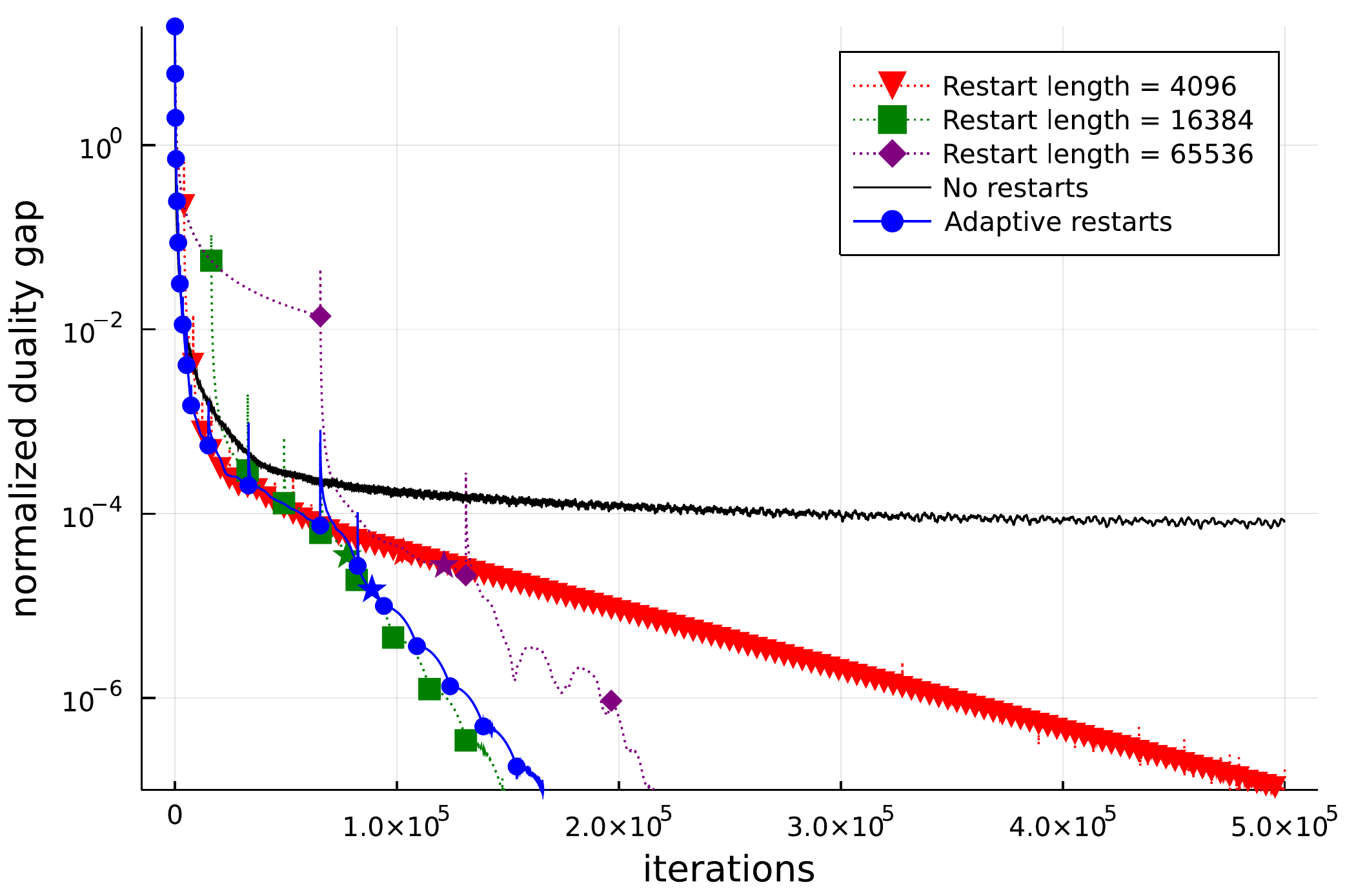}
    \includegraphics[height=150pt]{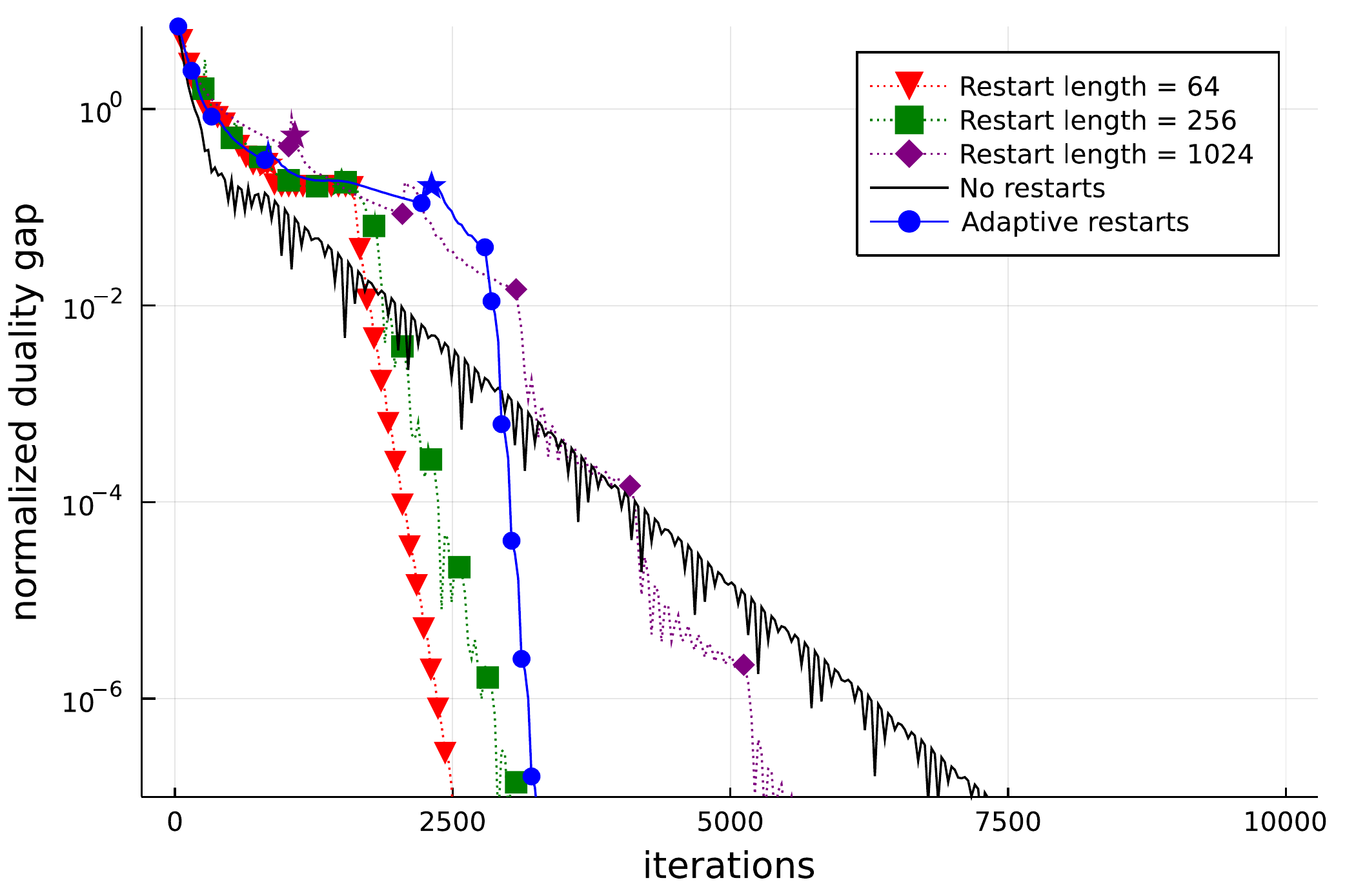}
    \includegraphics[height=150pt]{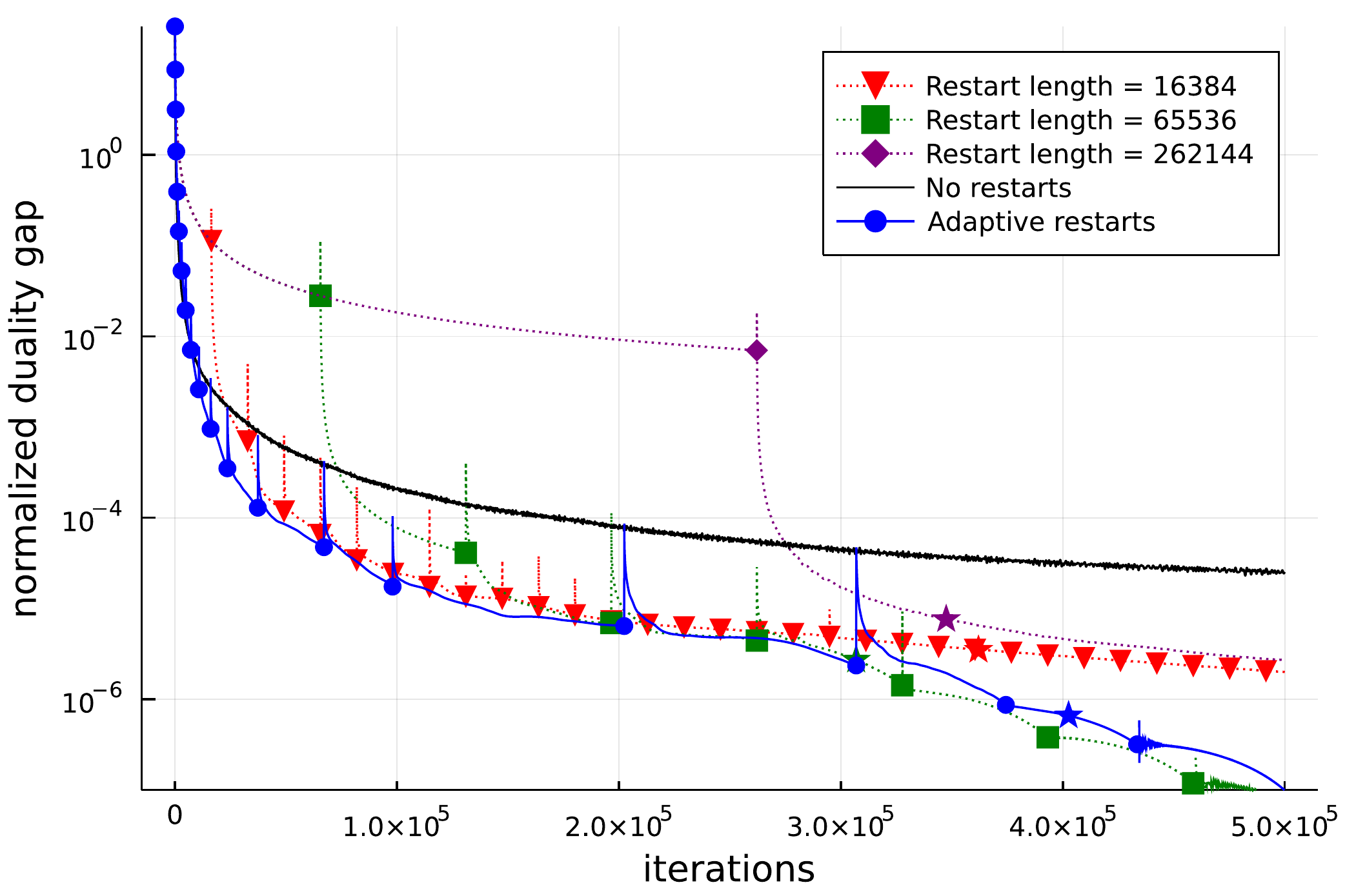} \\
    \caption{Plots show normalized duality gap (in $\log$ scale) versus number of iterations for restarted PDHG. Results from left to right, top to bottom for qap10, qap15, nug08-3rd, and nug20. Each plot compares no restarts, adaptive restarts, and the three best fixed restart lengths.
    For the restart schemes we evaluate the normalized duality gap at the average, for no restarts we evaluate it at the last iterate as the average performs much worse.
    A star indicates the last iteration that the active set changed before the iteration limit was reached.
    Series markers indicate when restarts occur.}
    \label{fig:pdhg-experiments}
\end{figure}

The normalized duality gap is not a typical metric for determining solution quality. Typically, the KKT error is a more standard \cite{o2016conic} and algorithm-independent metric. Indeed as Lemma~\ref{lem:bound-kkt-error} shows, the KKT error can be bounded above by the normalized duality gap times a constant.

For completeness, Table~\ref{table:kkt-error} reports the number of iterations required by each approach to drive the KKT error below $10^{-6}$ (either for the average or last iterate). This table exhibits very similar patterns to Figure~\ref{fig:pdhg-experiments}. In particular, non-restarted PDHG fails to drive the KKT error below $10^{-6}$ except for the nug08-3rd problem.
In Table~\ref{table:kkt-error} we compare the adaptive restarts with the best fixed frequency restart scheme \blue{for PDHG, EGM and ADMM}.
Notably, the number of iterations is fairly similar for both approaches. However, adaptive restarts do not require any hyperparameter search over the restart lengths and therefore require much less computational effort.

\begin{remark}[Flexible restarts]\label{rem:flexible}
A major benefit of using the normalized duality gap as a potential function is that provides substantial flexibility in algorithm design while preserving the  convergence properties given in Theorem~\ref{thm:adaptive}. For example, we can develop an algorithm that chooses either to restart to the average or last iterate. To do this we add an additional line below Line~\ref{line:average} of Algorithm~\ref{al:restarted+algorithm} as follows
\begin{flalign}\label{eq:flexible-restart-scheme}
\zind{n,t} \gets \begin{cases}
z^{n,t} & \rho_{\| z^{n,t} - z^{n, 0} \|}(z^{n,t}) < \rho_{\| \zind{n,t} - z^{n, 0} \|}(\zind{n,t}) \\
\zind{n,t} & \text{otherwise}. \\
\end{cases}
\end{flalign}
In other words, we consider the last iterate as a candidate for restarting, if it has a better normalized duality gap.
Inspection of Theorem~\ref{thm:adaptive} shows that it still holds with modification to \eqref{eq:flexible-restart-scheme}. This is because Property~\ref{subproperty:distance-bound} also holds for $z^{n,t}$ (Proposition~\ref{prop:assumption-one-holds} and~\ref{prop:generic-nonexpansive} establishes $\|z^{t+1}- \zStar\|\le \|z^{t}- \zStar\|$, then use Proposition~\ref{fact:pdhg-norm} to change the norm from PDHG to Euclidean).
Indeed one can imagine many other ways of producing restart candidates that satisfy Property~\ref{subproperty:distance-bound} (e.g., convex combinations of iterates) and therefore maintain the guarantees of Theorem~\ref{thm:adaptive}.
We implement \eqref{eq:flexible-restart-scheme} and report the results 
Table~\ref{table:kkt-error} under the name `flexible restarts'. One can see that the flexible restart scheme is in general slightly faster than the original adaptive restart scheme; for nug08-3rd it is much faster. 
\end{remark}

\begin{table}
\blue{
\begin{tabular}{cccccc}
\toprule
problem name & method & no restarts & best fixed frequency & adaptive restarts & flexible restarts\\
\midrule
qap10 & PDHG & $-$ & $76230$ & $86820$ & $84240$ \\
 & EGM & $-$ & $76170$ & $86820$ & $84240$ \\
 & ADMM & $-$ & $19590$ & $26340$ & $21180$\\
\midrule
qap15 & PDHG & $-$ & $144060$ & $153780$ & $154560$ \\
 & EGM & $-$ & $144060$ & $153780$ & $154590$ \\
 & ADMM & $-$ & $46620$ & $44880$ & $36660$\\
\midrule
nug08-3rd & PDHG  & $6600$ & $2280$ & $3300$ & $1590$ \\
 & EGM & $6300$ & $2520$ & $3180$ & $1590$ \\
 & ADMM & $930$ & $2520$ & $2700$ & $2520$\\
\midrule
nug20 & PDHG & $-$ & $399840$ & $447300$ & $425610$ \\
 & EGM & $-$ & $399840$ & $445590$ & $426630$ \\
 & ADMM & $-$  & $148110$ & $124380$ & $132360$ \\
\bottomrule
\end{tabular}
}
\caption{Number of iterations required for the KKT error (the maximal absolute primal residual, dual residual and primal dual gap) to fall below $10^{-6}$.
\blue{The stopping condition is checked every $30$ iterations.} 
The $-$ symbol indicates the iteration limit \blue{($500,000$ for PDHG and EGM; $200,000$ for ADMM as its iterations are slower)} was reached.
The best fixed frequency restart scheme is the fixed frequency restart scheme from Figure~\ref{fig:pdhg-experiments} that requires the least iterations to achieve $10^{-6}$ KKT error. Flexible restarts are described in Remark~\ref{rem:flexible}.
}
\label{table:kkt-error}
\end{table}

\section*{Acknowledgements}

We thank Brendan O’Donoghue, Vasilis Charisopoulos, and Warren Schudy for useful discussions and feedback on this work.

\bibliographystyle{amsplain}
\bibliography{bio.bib}

\newpage

\appendix

\section{Proof of Lemma~\ref{prop:singular-value}}\label{sec:proof-of:lem:minimum-nonsingular-value}

\begin{proof}
Let $U \Sigma V^{\T}$ be a singular value decomposition of $H$. Recall $U$ and $V$ are orthogonal matrices ($U^{\T} U = U U^{\T} = \eye$ and $\| U z \| = \| z \|$), and $\Sigma$ is diagonal.  First, by assumption there exists some $\bar{z}$ such that $H \bar{z} = h$. This implies that for $\bar{p} = V^{\T} \bar{z}$ we have 
$U^{\T} h = U^{\T} (U \Sigma V^{\T}) \bar{z} = \Sigma V^{\T} \bar{z} = \Sigma \bar{p}$.
Consider some $z \in \R^{n}$. Define $p = V^{\T} z$,
$$
p^{*}_i = \begin{cases} \Sigma_{ii}^{-1} (U^{\T} h)_i & \Sigma_{ii} \neq 0 \\
z_i & \text{otherwise}
\end{cases}
$$
and $\zStar = V p^{*}$. If $\Sigma_{ii} \neq 0$ then by definition of $p^{*}$, $(\Sigma p^{*})_i = (U^{\T} h)_i$. If $\Sigma_{ii} = 0$ then $(U^{\T} h)_i = (\Sigma \bar{p})_i = \Sigma_{ii} \bar{p}_i = 0 = \Sigma_{ii} p^{*}_i = (\Sigma p^{*})_i$. Hence $\Sigma p^{*} = U^{\T} h$. Therefore, 
\begin{flalign*}
\| \zStar - z \| &= \| V^{\T} \zStar - V^{\T} z \| \\
&= \| p^{*} - p \| \\
&\le \frac{1}{\sigmaMin{H}} \| \Sigma p^{*} - \Sigma p \| \\
&= \frac{1}{\sigmaMin{H}} \| (U^{\T} h) - \Sigma p \| \\
&= \frac{1}{\sigmaMin{H}} \| h - U \Sigma V^{\T} z \| \\
&= \frac{1}{\sigmaMin{H}} \| h - H z \|.
\end{flalign*}
\end{proof}

\section{Proof of Proposition~\ref{prop:show-iterates-are-close} for EGM}\label{sec:proof-of:prop:show-iterates-are-close}
\begin{proof}
Let $\zStar = \min_{z \in \ZStar} \| z^{t} - z \|$. By definition of $\tz^t$ and that $F$ is $\Lip$-Lipschitz, we have
\begin{flalign}
&F(z^{t-1})^{\T} (\tz^t - z^{t-1}) + \frac{1}{2 \eta} \| \tz^t - z^{t-1} \|^2 \notag\\
\le& F(z^{t-1})^{\T} (\zStar - z^{t-1}) + \frac{1}{2 \eta} \| \zStar - z^{t-1} \|^2 \notag \\
=& F(\zStar)^{\T} (\zStar - z^{t-1}) + (F(z^{t-1}) - F(\zStar))^{\T} (\zStar - z^{t-1}) + \frac{1}{2 \eta} \| \zStar - z^{t-1} \|^2 \ .
\label{eq:Fstar-bound}
\end{flalign}
Note that
\begin{flalign}\label{eq:Ft-equality}
\begin{split}
F(z^{t-1})^{\T} (\tz^t - z^{t-1}) =F(\zStar)^{\T} (\tz^t - z^{t-1}) +
(F(z^{t-1}) - F(\zStar))^{\T} (\tz^t - \zStar) + \\
(F(z^{t-1}) - F(\zStar))^{\T} (\zStar - z^{t-1}) \ .
\end{split}
\end{flalign}
Substituting \eqref{eq:Ft-equality} into the LHS of \eqref{eq:Fstar-bound} and cancelling terms yields
\begin{align}\label{eq:F-intermediate}
\begin{split}
&F(\zStar)^{\T} (\tz^t - z^{t-1}) + (F(z^{t-1}) - F(\zStar))^{\T} (\tz^t - \zStar) + \frac{1}{2 \eta} \| \tz^t - z^{t-1} \|^2  \\
&\le F(\zStar)^{\T} (\zStar - z^{t-1}) +  \frac{1}{2 \eta} \| \zStar - z^{t-1} \|^2\ .
\end{split}
\end{align}
By $F(\zStar)^{\T} (\zStar - z^{t-1}) \le F(\zStar)^{\T} (\tz^t - z^{t-1})$, \eqref{eq:F-intermediate}, $F$ is $\Lip$-Lipschitz, and the triangle inequality one gets
\begin{flalign}
0 &\ge F(\zStar)^{\T} (\zStar - z^{t-1}) - F(\zStar)^{\T} (\tz^t - z^{t-1}) \notag \\
&\ge \frac{1}{2 \eta} \| \tz^t - z^{t-1} \|^2 + (F(z^{t-1}) - F(\zStar))^{\T} (\tz^t - \zStar)  - \frac{1}{2 \eta} \| \zStar - z^{t-1} \|^2 \notag \\
&\ge  \frac{1}{2 \eta} \| \tz^t - z^{t-1} \|^2 - \Lip \| z^{t-1} - \zStar \| \| \tz^t - \zStar \| - \frac{1}{2 \eta} \| \zStar - z^{t-1} \|^2 \notag \\
&= \frac{1}{2 \eta} \| \tz^t - z^{t-1} \|^2 - \Lip \| z^{t-1} - \zStar \| \| z^{t-1} - \zStar + \tz^t - z^{t-1} \| - \frac{1}{2 \eta} \| \zStar - z^{t-1} \|^2 \notag \\
&\ge \frac{1}{2 \eta} \| \tz^t - z^{t-1} \|^2 - \Lip \| z^{t-1} - \zStar \| \| \tz^t - z^{t-1} \| - \left( \Lip + \frac{1}{2 \eta} \right) \| \zStar - z^{t-1} \|^2 \notag \\
& = \pran{\frac{1}{2\eta} \| \tz^t - z^{t-1} \| - \pran{\frac{1}{2\eta}+\Lip}\| \zStar - z^{t-1} \| } \pran{\| \tz^t - z^{t-1} \|+\| \zStar - z^{t-1} \|} \
\label{eq:quadratic-inequality}
\end{flalign}
where the first inequality uses 
$F(\zStar)^{\T} (\zStar - z^{t-1}) \le F(\zStar)^{\T} (\tz^t - z^{t-1})$, the second inequality uses \eqref{eq:F-intermediate}, the third inequality uses that $F$ is $\Lip$-Lipschitz, and the final inequality uses the triangle inequality.

Notice that $\| \tz^t - z^{t-1} \|+\| \zStar - z^{t-1} \| > 0$ otherwise the result trivially holds, thus by \eqref{eq:quadratic-inequality} we get $\frac{1}{2\eta} \| \tz^t - z^{t-1} \| - \pran{\frac{1}{2\eta}+\Lip}\| \zStar - z^{t-1} \|\le 0$, rearranging yields
$$
\| \tz^t - z^{t-1} \| \le (1+ 2\eta \Lip) \| \zStar - z^{t-1} \|\le 3 \| \zStar - z^{t-1} \|\ ,
$$
where the last inequality uses that $\eta \le 1 / \Lip$ from the premise of Proposition~\ref{prop:assumption-one-holds}.
\end{proof}

\section{Proof of Lemma~\ref{example:admm}}\label{app:admm-lp-proof}

\begin{proof}
Recall for ADMM the norm is $\| z \| = \sqrt{z^{\T} M z}$ with
$$M=
\begin{pmatrix}
0 & 0 & 0 \\
0 & \eta \V^{\T} \V & 0 \\
0 & 0 & \frac{1}{\eta} \eye
\end{pmatrix} = \begin{pmatrix}
0 & 0 & 0 \\
0 & \eta \eye & 0 \\
0 & 0 & \frac{1}{\eta} \eye
\end{pmatrix}\ ,
$$
where we ustilize $V=I$ in \eqref{eq:lp-ADMM}.
Suppose that $\eta = 1$.
Consider $z \in Z$, $r \in (0, 2R]$. Let
$$
v = \begin{pmatrix}
0 \\
(c + y)^{-} \\
x_{V} - x_{U}
\end{pmatrix} \text{ and note that } F(z) = \begin{pmatrix}
-y \\
c + y \\
x_{U} - x_{V}
\end{pmatrix}.
$$

Let $z_1 = z + r \frac{v}{\| v \|}$, then $z_1\in Z$ by the definition of $Z$. It follows from \eqref{eq:lp-0} that
\begin{align}\label{eq:admm-lp-1}
    \begin{split}
        \rho_r(z) & \ge \frac{F(z)^{\T} (z - z_1)}{r} = -\frac{1}{\| v \|} F(z)^{\T} v =  \left\| v \right\|_{2} \ .
    \end{split}
\end{align}

Consider any optimal solution $\zStar = (\xStar_{V}, \xStar_{V}, \yStar)$, define $\hat{z} = (\xStar_{V}, \xStar_{V}, y) \in \ZStar$.
Let $z_2= z - \mu (z - \hat{z})$ for $\mu = \min\left\{ \frac{r}{\| z - \hat{z} \|}, 1 \right\}$, then $\|z_2-z\|\le \frac{r}{\|  z - \hat{z} \|}\| z - \hat{z} \|\le r$. Meanwhile, we have $z_2 = (1- \mu) z +  \mu \hat{z}$, thus $z_2\in Z$ by convexity.
We conclude that $z_2 \in \zBall{r}{z}$.
Therefore, it follows from \eqref{eq:lp-0} that
\begin{align}\label{eq:admm-lp-2}
    \begin{split}
        \rho_r(z) & \ge \frac{1}{r} \min\left\{ \frac{r}{\|  z - \hat{z} \|}, 1 \right\} 
        F(z)^{\T}  (z - \hat{z}) = \min\left\{ \frac{1}{\|  z - \hat{z} \|}, \frac{1}{r} \right\} \begin{pmatrix}
        -y \\
        c+y \\
        x_{U} - x_{V}
        \end{pmatrix} \cdot \begin{pmatrix}
        x_U - \xStar_V \\
        x_{V} - \xStar_V \\
        y
        \end{pmatrix} \\
        &= \min\left\{ \frac{1}{\|  z - \hat{z} \|}, \frac{1}{r} \right\} 
         c^{\T} (x_{V} -  \xStar_{V})  \ .
    \end{split}
\end{align}
Substituting $r \in (0, 2R]$ and $\| z - \hat{z} \| \le \| z - \zStar \| \in [0,2R]$ into \eqref{eq:admm-lp-2} and noticing $\rho_r(z)\ge 0$ yields
\begin{align}\label{eq:admm-lp-3}
    \begin{split}
        \rho_r(z) & \ge \frac{1}{2R}
         (c^{\T} x_{V} -  c^{\T} \xStar_{V})^+ \ .
    \end{split}
\end{align}
Combining \eqref{eq:admm-lp-1} and \eqref{eq:admm-lp-3}, we deduce there exists $K'$ and $h'$ such that
\begin{flalign*}
    (4R^2 +1)\rho_r(z)^2 &\ge ((c^{\T} x_{V} -  c^{\T} \xStar_{V})^+)^2 + \| (c + y)^{-} \|_2^2 + \| x_V - x_U \|_2^2 \\
    &= ((c^{\T} x_{V} -  c^{\T} \xStar_{V})^+)^2 + \| (c + y)^{-} \|_2^2 + \| x_V - x_U \|_2^2 +  \| x_U^{-} \|_2^2 + \| A x_{V} - b \|_2^2  \\
    &= \|(h'-K' z )^+\|^2_2 \ge \frac{1}{H(K')^2}\dist(z,\ZStar)^2 \ ,
\end{flalign*}
where the last inequality is from duality, Hoffman condition (Equation~\eqref{eq:hoffman}), and the fact that $\| z \|_2 \ge \| z \|$. Taking the square root obtains the result for $\eta = 1$.

Next, we consider the case $\eta \neq 1$. Let us denote the corresponding normalized duality gap as $\rho_r^{\eta}$ and distance as $\dist^{\eta}$ then with $\theta = \max\{ \eta, 1/\eta \}$ we get
$$
\rho_{r}^{\eta}(z) \ge \frac{\rho_{\theta r}^{1}(z)}{\theta}  \ge \frac{1}{\theta H(K') \sqrt{4 R^2 + 1}} \mathbf{dist}^{1}(z, \ZStar) \ge \frac{1}{\theta^2 H(K') \sqrt{4 R^2 + 1}} \mathbf{dist}^{\eta}(z, \ZStar) \ .
$$
\end{proof}

\section{Proof of Proposition~\ref{thm:admm-egm:bound-adaptive-restart-scheme}}\label{sec:proof-of-bound-adaptive-restart-scheme}

The next lemma is used in the proof of Proposition~\ref{thm:admm-egm:bound-adaptive-restart-scheme}.

\begin{lemma}\label{lem:bound-adaptive-restart-scheme}
Consider the sequence $\{ z^{n, 0} \}_{i=0}^{\infty}$, $\{ \len{n} \}_{i=1}^{\infty}$ generated by Algorithm~\ref{al:restarted+algorithm} with the adaptive restart scheme and $\beta<\frac{1}{q+3}$. Suppose \eqref{eq:pda} satisfies Property \ref{property:restart-assumption}, and there exists $a \in (0,\infty)$ s.t. $\rho_r(z) \ge \frac{a}{1 + \| z \|} \dist(z, \ZStar)$.
Then $z^{n,0}$ stays in a bounded region, in particular, $z^{n,0} \in \zBall{R}{z^{0,0}}$ for all $n \in \N$ with $R = \frac{2C (q + 3)^3}{\beta a (1 - \beta (q+3))}  \dist( z^{0, 0}, \ZStar) (1 + \| z^{0, 0} \| + \dist(z^{0,0}, \ZStar))$.
\end{lemma}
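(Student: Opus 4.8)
The plan is a bootstrapping induction that controls the growth of the iterate norm and the decay of the distance to optimality simultaneously. Write $d_n := \dist(z^{n,0},\ZStar)$, $r_n := \|z^{n,0}-z^{n-1,0}\|$, and $u_n := 1 + \|z^{n,0}\|$; if ever $r_{n}=0$ then Proposition~\ref{fact:avoid-trivial-setting} shows $z^{n,0}\in\ZStar$ and the statement is immediate, so I may assume all $r_n>0$. First I would record two estimates. Telescoping the adaptive restart trigger \eqref{eq:adaptive-restart-scheme} gives, for $n\ge 1$,
\[
\rho_{r_{n+1}}(z^{n+1,0}) \le \beta\,\rho_{r_{n}}(z^{n,0}) \le \cdots \le \beta^{\,n}\rho_{r_1}(z^{1,0}),
\]
while Property~\ref{subproperty:reduce-potential-function} and Property~\ref{subproperty:distance-bound} (with $\len{0}\ge 1$) bound the first term by $\rho_{r_1}(z^{1,0}) \le 2C r_1 \le 2C(q+2)d_0$. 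Combining with the degraded sharpness hypothesis evaluated at $z^{n+1,0}$ yields the distance estimate
\[
d_{n+1} \;\le\; \frac{u_{n+1}}{a}\,\rho_{r_{n+1}}(z^{n+1,0}) \;\le\; \frac{u_{n+1}}{a}\,\beta^{\,n}\,2C(q+2)d_0 .
\]

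Second, I would bound the growth of the iterate norm across outer loops. By Property~\ref{subproperty:distance-bound}, $\|z^{n+1,0}-z^{n,0}\| = r_{n+1} \le (q+2)d_n$, and since $d_n \le \|z^{n,0}\| + \dist(0,\ZStar)$ is controlled by $u_n$ up to a fixed additive constant, this gives a per-step bound of the form $u_{n+1} \le (q+3)\,u_n$ (the factor $q+3 = 1 + (q+2)$ is exactly the $(q+2)$ from the drift plus the unit from the ``$1+\|z\|$''). Iterating, $u_n \lesssim (q+3)^{n}\,(1+\|z^{0,0}\|+d_0)$. Substituting this into the distance estimate from the first step shows
\[
d_n \;\lesssim\; \frac{(1+\|z^{0,0}\|+d_0)}{a}\,2C(q+2)d_0\,\big(\beta(q+3)\big)^{n},
\]
a geometric sequence whose ratio is $\beta(q+3)$. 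The hypothesis $\beta<\tfrac{1}{q+3}$ makes this ratio strictly less than one, so $\sum_n d_n$ converges; summing the drift $\|z^{n,0}-z^{0,0}\| \le (q+2)\sum_{k<n} d_k$ and evaluating the geometric series $\sum_n (q+3)^n\beta^{n-1} = \tfrac{q+3}{1-\beta(q+3)}$ produces a finite bound of exactly the form of the claimed $R$, which closes the induction.

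The main obstacle is the tension built into the degrading sharpness: the distance $d_n$ decays only like $\beta^n$ times the factor $u_n = 1+\|z^{n,0}\|$, while $u_n$ itself can grow by up to $(q+3)$ per outer step because the sharpness constant weakens as the iterates move away from the origin. The whole argument therefore hinges on showing that the geometric \emph{decay} of the normalized duality gap strictly dominates the geometric \emph{growth} of the iterate norm, i.e.\ that $\beta(q+3)<1$; this is precisely the quantitative role of the hypothesis, and the cubic factor $(q+3)^3$ together with $(1-\beta(q+3))^{-1}$ in $R$ are the accumulated cost of this competition. I expect the delicate bookkeeping to be the per-step norm-growth inequality $u_{n+1}\le(q+3)u_n$ — in particular, carefully relating $d_n$ back to $1+\|z^{n,0}\|$ and absorbing the constant $\dist(0,\ZStar)$ — since once that is in hand the remainder is a geometric summation.
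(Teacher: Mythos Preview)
Your proposal is essentially the same argument as the paper's: telescope the adaptive restart condition to get $\beta^{n}$ decay of $\rho$, establish a crude $(q+3)^n$ growth bound, combine the two to get $(\beta(q+3))^n$ control of the per-step drift, and sum the resulting geometric series. The one organizational difference worth noting is that the paper avoids the ``delicate bookkeeping'' you flag around $u_{n+1}\le (q+3)u_n$: instead of recursing on $u_n=1+\|z^{n,0}\|$ (where you must absorb the awkward additive $\dist(0,\ZStar)$), the paper recurses directly on $d_n$ via the clean inequality
\[
d_n \le d_{n-1} + \|z^{n,0}-z^{n-1,0}\| \le d_{n-1} + (q+2)d_{n-1} = (q+3)\,d_{n-1},
\]
which needs only the triangle inequality and Property~\ref{subproperty:distance-bound}. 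Summing the drift then gives $\|z^{N,0}-z^{0,0}\|\le \tfrac{(q+3)^{N+1}}{2}d_0$, and the bound on $1+\|z^{n,0}\|$ follows by a single triangle inequality from $z^{0,0}$. This bypasses any reference to $\dist(0,\ZStar)$ and yields the stated constant directly.
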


\begin{proof}

First, notice that it holds for any $n>0$ that
$\dist(z^{n, 0}, \ZStar) \le \dist(z^{n-1, 0}, \ZStar) + \| z^{n, 0} - z^{n-1,0} \| \le (q + 3) \dist(z^{n-1, 0}, \ZStar)$
where the first inequality uses the triangle inequality, and the second inequality is from Property~\ref{subproperty:distance-bound}.
Recursively applying this inequality yields
\begin{align}
\label{eq:bound-distance}
\dist(z^{n, 0}, \ZStar) \le (q + 3)^n \dist(z^{0, 0}, \ZStar) \ .
\end{align}
Thus, we have
\begin{flalign}
\| z^{N, 0} - z^{0, 0} \| &\le \sum_{n=0}^{N-1} \| z^{n+1, 0} - z^{n,0} \| \le (q + 3) \sum_{n=0}^{N-1} \dist(z^{n, 0}, \ZStar) \notag \\
&\le \dist(z^{0,0}, \ZStar) \sum_{n=0}^{N-1} (q+3)^{n+1} = \dist(z^{0,0}, \ZStar) \frac{(q + 3)^{N + 1} - (q+3)}{q+2} \notag  \\
&\le \frac{(q + 3)^{N + 1}}{2} \dist(z^{0,0}, \ZStar) \ , \label{eq:bound-total-distance-travelled-by-q}
\end{flalign}
where the first inequality uses the triangle inequality, the second inequality is from Property~\ref{subproperty:distance-bound}, the third inequality utilizes \eqref{eq:bound-distance} for $n=0,...,N-1$, the equality uses the formula for the sum of a geometric series, and the last inequality uses $q\ge 0$.

Furthermore, we have
\begin{flalign}\label{eq:bound-rho-n-by-dist}
\rho_{\| z^{n, 0} - z^{n-1, 0} \|}(z^{n,0}) &\le \beta^{n-1} \rho_{\| z^{1, 0} - z^{0, 0} \|}( z^{1,0} ) \le \beta^{n-1} 2 C \| z^{1, 0} - z^{0, 0} \| \notag \\
&\le \beta^{n-1} 2 C (q + 2)  \dist( z^{0, 0}, \ZStar) \ ,
\end{flalign}
where the first inequality recursively uses \eqref{eq:adaptive-restart-scheme}, the second inequality is from Property~\ref{subproperty:reduce-potential-function} and $\len{0} \ge 1$, and the last inequality uses Property~\ref{subproperty:distance-bound}.

Therefore, it holds that
\begin{align}\label{eq:bound-1}
\begin{split}
    \| z^{n+1,0} - z^{n,0} \| &\le (q + 2) \dist(z^{n,0}, \ZStar)  \\
&\le (q + 2) \frac{\rho_{\| z^{n, 0} - z^{n-1, 0} \|}(z^{n,0})}{a} \left( 1 + \| z^{n,0} \|  \right) \\
&\le \beta^{n-1} \frac{2 C (q + 2)^2 \dist( z^{0, 0}, \ZStar)}{a}  \left( 1 + \| z^{n,0} \|  \right) \\
&\le \beta^{n-1} \frac{2 C (q + 2)^2 \dist( z^{0, 0}, \ZStar)}{a}  \left( 1 + \| z^{0, 0} \| + \| z^{n,0} - z^{0,0} \|   \right) \\
&\le \beta^{n-1}  \frac{2 C (q + 2)^2 \dist( z^{0, 0}, \ZStar)}{a} \left(1 + \| z^{0, 0} \| + \frac{(q + 3)^{n+1}}{2} \dist(z^{0,0}, \ZStar) \right) \\
&\le (\beta (q+3))^{n} \frac{C (q + 3)^3}{\beta a}  \dist( z^{0, 0}, \ZStar) \left(1 + \| z^{0, 0} \| + \dist(z^{0,0}, \ZStar) \right) \ ,
\end{split}
\end{align}
where the first inequality uses Property~\ref{subproperty:distance-bound}, the second inequality is from that $\rho_r(z) \ge \frac{a}{1 + \| z \|} \dist(z, \ZStar)$, the third inequality uses \eqref{eq:bound-rho-n-by-dist}, the fourth inequality uses the triangle inequality, the fifth inequality uses \eqref{eq:bound-total-distance-travelled-by-q}, and the last inequality uses $q+3 \ge 2$.

Therefore, we have

\begin{flalign*}
\| z^{N, 0} - z^{0, 0} \| &\le \sum_{n=0}^{N-1} \| z^{n+1,0} - z^{n,0} \| \\
&\le \sum_{n=0}^{N-1}  (\beta (q+3))^{n} \frac{C (q + 3)^3}{\beta a}  \dist( z^{0, 0}, \ZStar) \left(1 + \| z^{0, 0} \| + \dist(z^{0,0}, \ZStar) \right) \\
&\le \frac{C (q + 3)^3}{\beta a (1 - \beta (q+3))}  \dist( z^{0, 0}, \ZStar) \left(1 + \| z^{0, 0} \| + \dist(z^{0,0}, \ZStar) \right) \ ,
\end{flalign*}
where the first inequality is the triangle inequality, the second inequality uses \eqref{eq:bound-1}, and the last inequality is from the bound on the sum of a geometric series by noticing $\beta(q+3)<1$. This finishes the proof.
\end{proof}

\emph{Proof of Proposition~\ref{thm:admm-egm:bound-adaptive-restart-scheme} for EGM and ADMM appled to LP.}
Recall that the Lagrangian form of an LP instance is $\alpha$-sharp on $S(R)=\{z \in Z : \| z \| \le R\}$ with $\alpha=\frac{1}{H(K)\sqrt{1+4R^2}}$ (see Lemma~\ref{example:lp}) and the ADMM form of a LP instance is $\alpha$-sharp on $S(R)=\{z \in Z : \| z \| \le R\}$ with $\alpha=\frac{1}{ \max\{ \eta^2, 1/\eta^2 \} H(K')\sqrt{1+4R^2}}$ (see Lemma~\ref{example:admm}).
Thus the sharpness constant $\alpha$ satisfies the condition in Lemma~\ref{lem:bound-adaptive-restart-scheme} with $a=\frac{1}{2 H(K)}$ for standard form LP (Lemma~\ref{example:lp}) and and $a=\frac{1}{2 \max\{ \eta^2, 1/\eta^2 \} H(K')}$ for ADMM form LP. We finish the proof by setting $R = \frac{2C (q + 3)^3}{\beta a (1 - \beta (q+3))}  \dist( z^{0, 0}, \ZStar) (1 + \| z^{0, 0} \| + \dist(z^{0,0}, \ZStar))$ as stated in Lemma \ref{lem:bound-adaptive-restart-scheme}. \qed

\section{Proof of Corollary~\ref{coro:pdhg-with-euclidean-norm}}\label{sec:pdhg-with-euclidean-norm}

\begin{proof}
Let $\| z \|_M = \sqrt{\| x \|^2 - 2 \eta y^{\T} A x + \| y \|^2}$. Recall that  Property~\ref{property:restart-assumption} holds for $\| \cdot \|_M$ with $q = 0$ and $C = 2 / \eta$ (refer to Table~\ref{tbl:summary-of-C-and-q-values}).
Define $\hat{B}_{r}(z) = \{ z \in Z : \| z \|_{M} \le r \}$ then with $\hat{r} = r \sqrt{\frac{1}{1 - \eta \sigmaMax{A}}}$ for any $z \in Z$ we get
\begin{flalign*}
\rho_r(z) &= \frac{\max_{\tz \in \zBall{r}{z}}\{ \Lag(x, \ty) - \Lag(\tx, y)\} }{r} \\
&\le \frac{\max_{\tz \in \hat{B}_{\hat{r}}(z)}\{ \Lag(x, \ty) - \Lag(\tx, y)\} }{r} \\
&= \sqrt{\frac{1}{1 - \eta \sigmaMax{A}}} \frac{\max_{\tz \in \hat{B}_{\hat{r}}(z)}\{ \Lag(x, \ty) - \Lag(\tx, y)\} }{\hat{r}} \\
&\le  \frac{1}{\eta} \sqrt{\frac{1}{1 - \eta \sigmaMax{A}}} \frac{2 \hat{r}}{t} = \frac{1}{\eta (1 - \eta \sigmaMax{A})} \frac{2 r}{t}
\end{flalign*}
where the first inequality uses that $\zBall{r}{z} \subseteq \hat{B}_{\hat{r}}(z)$ by Proposition~\ref{fact:pdhg-norm}, the second inequality uses Property~\ref{subproperty:reduce-potential-function}.
This establishes $C = \frac{2}{\eta (1 - \eta \sigmaMax{A})}$.
Moreover, with $\zStar \in \argmin_{z \in \ZStar} \| z^{0} - z \|_M$ we have
$$
\| \bz^{t} - z^{0} \|^2 \le \frac{\| \bz^{t} - z^{0} \|_M^2}{1 - \eta \sigmaMax{A}}  \le \frac{4 \| \zStar - z^{0} \|_{M}^2}{1 - \eta \sigmaMax{A}}  \le \frac{4 (1 + \eta \sigmaMax{A})}{1 - \eta \sigmaMax{A}} \| \zStar -  z^{0} \|^2
$$
where the first inequality uses by Proposition~\ref{fact:pdhg-norm}, the second inequality uses Property~\ref{subproperty:distance-bound}, and the third inequality uses Proposition~\ref{fact:pdhg-norm}.
This establishes $q = 4 \frac{1 + \eta \sigmaMax{A}}{1 - \eta \sigmaMax{A}} - 2$.
\end{proof}

\section{Linear time algorithm for linear trust region problems}
\label{appendix:linear-time-implementation:trustregion}

\newcommand{\lo}{{\text{lo}}}
\newcommand{\mi}{{\text{mid}}}
\newcommand{\hi}{{\text{hi}}}

\begin{algorithm}[]
 \SetAlgoLined
 \SetKwInput{KwAssumptions}{Assumptions}
 \SetKwInput{KwInvariants}{Invariants}
 \DontPrintSemicolon

 \KwIn{Center $z \in \R^{n+m}$, lower bound $l \in (\R \cup \{-\infty\})^{n+m}$, objective $g \in \R^{n+m}$, radius $r \ge 0$}
 \KwAssumptions{$l \le z$, $g > 0$, $r \ge 0$}
 \KwResult{Trust region solution $\tz$ solving (\ref{eq:lp-trust-region-problem})}
 \lIf{$\|l - z\| \le r$}{\Return{$\tz(\infty) = l$}} \nllabel{al:linear-trust-region:return-l}
 $\lambda_\lo \gets 0$,
 $\lambda_\hi \gets \infty$\;
 $f_\lo \gets \sum_{i : \hat{\lambda}_i \leq \lambda_\lo} (l_i - z_i)^2$,
 $f_\hi \gets \sum_{i : \hat{\lambda}_i \geq \lambda_\hi} g_i^2$\;
 $\mathcal{I} \gets \{i\;:\;\lambda_\lo < \hat{\lambda}_i < \lambda_\hi\}$\;
 \While{$\mathcal{I} \neq \emptyset$}{\nllabel{al:linear-trust-region:while}
  \KwInvariants{$\mathcal{I} = \{i\;:\;\lambda_\lo < \hat{\lambda}_i < \lambda_\hi\}$ \linebreak
  For $\lambda_\lo < \lambda < \lambda_\hi$, $\| \tz(\lambda) - z \|^2 = f_\lo + \lambda^2 f_\hi + \sum_{i \in \mathcal{I}} (\tz(\lambda)_i - z_i)^2$ \linebreak
  $\|\tz(\lambda_\lo) - z\|^2 \le r^2 \le \|\tz(\lambda_\hi) - z\|^2$}
  $\lambda_\mi \gets \text{median}(\hat{\lambda}_i : i \in \mathcal{I})$
  \nllabel{al:linear-trust-region:median}\;
  $f_\mi \gets f_\lo + \sum_{i \in \mathcal{I}} (\tz(\lambda_\mi)_i - z_i)^2 + f_\hi \lambda_\mi^2$ \tcp*{$f_\mi = \|\tz(\lambda_\mi) - z\|^2$}
  \uIf{$f_\mi < r^2$} {
   $\lambda_\lo \gets \lambda_\mi$\;
   $f_\lo \gets f_\lo + \sum_{i \in \mathcal{I} : \hat{\lambda}_i \leq \lambda_\mi} (l_i - z_i)^2$\;
   $\mathcal{I} \gets \{i \in \mathcal{I} : \hat{\lambda}_i > \lambda_\mi\}$\;
  }
  \Else{
   $\lambda_\hi \gets \lambda_\mi$\;
   $f_\hi \gets f_\hi + \sum_{i \in \mathcal{I} : \hat{\lambda}_i \ge \lambda_\mi} g_i^2$\;
   $\mathcal{I} \gets \{i \in \mathcal{I} : \hat{\lambda}_i < \lambda_\mi\}$\;
  }
 }
 $\lambda_\mi \gets \sqrt{(r^2 - f_\lo)/f_\hi}$
 \nllabel{al:linear-trust-region:final}
 \tcp*{solving $r^2 = \|\tz(\lambda_\mi) - z\|^2 =  f_\lo + \lambda^2 f_\hi$}
 \Return{$\tz(\lambda_\mi)$}\;
 \caption{Linear objective trust region algorithm}
 \label{al:linear-trust-region}
\end{algorithm}

\begin{theorem}Algorithm~\ref{al:linear-trust-region} exactly solves (\ref{eq:lp-trust-region-problem}), the trust region problem with linear objective, Euclidean norm, and bounds.
\end{theorem}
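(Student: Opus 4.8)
The plan is to first pin down the form of the minimizer, then reduce (\ref{eq:lp-trust-region-problem}) to a one-dimensional monotone root-finding problem, and finally verify by induction that the divide-and-conquer loop in Algorithm~\ref{al:linear-trust-region} maintains its stated invariant and hence converges to the correct root. For the first step I would write down the KKT conditions: introducing a multiplier $\mu \ge 0$ for the ball constraint $\| \tz - z \|^2 \le r^2$ and keeping the bound $l \le \tz$ explicit, stationarity decouples across coordinates and gives $\tz_i = \max(z_i - g_i / \mu, l_i)$. Writing $\lambda = 1/\mu$ this is exactly $\tz(\lambda) = \max(z - \lambda g, l)$ as in (\ref{eq:lp-trust-region:max}), and since $g > 0$ the objective $g^{\T} \tz(\lambda)$ is non-increasing in $\lambda$, so (\ref{eq:lp-trust-region-problem}) is equivalent to maximizing $\lambda$ subject to feasibility, i.e.\ (\ref{eq:lp-trust-region:lambda}).

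Next I would establish monotonicity. For each coordinate, $\tz(\lambda)_i - z_i = -\min(\lambda g_i, z_i - l_i)$, so its squared contribution $\min(\lambda g_i, z_i - l_i)^2$ is continuous and non-decreasing in $\lambda$, saturating at $(l_i - z_i)^2$ once $\lambda \ge \hat{\lambda}_i = (z_i - l_i)/g_i$. Summing yields that $\| \tz(\lambda) - z \|^2$ is continuous and non-decreasing, with the explicit piecewise form (\ref{eq:lp-trust-region:summation}). This splits the problem into two cases: if $\| l - z \| \le r$ the entire box lies inside the ball and $\tz(\infty) = l$ is optimal (the early return on Line~\ref{al:linear-trust-region:return-l}); otherwise, by the intermediate value theorem there is a unique $\lambda^{\star}$ with $\| \tz(\lambda^{\star}) - z \| = r$, and $\tz(\lambda^{\star})$ is the optimal solution.

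The heart of the proof is verifying the loop invariant stated inside Algorithm~\ref{al:linear-trust-region}: that $\mathcal{I} = \{ i : \lambda_\lo < \hat{\lambda}_i < \lambda_\hi \}$, that $\| \tz(\lambda) - z \|^2 = f_\lo + \lambda^2 f_\hi + \sum_{i \in \mathcal{I}} (\tz(\lambda)_i - z_i)^2$ for all $\lambda \in (\lambda_\lo, \lambda_\hi)$, and that $\| \tz(\lambda_\lo) - z \|^2 \le r^2 \le \| \tz(\lambda_\hi) - z \|^2$. The base case follows from the initialization of $f_\lo, f_\hi, \mathcal{I}$ directly from (\ref{eq:lp-trust-region:summation}). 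For the inductive step, computing $f_\mi = \| \tz(\lambda_\mi) - z \|^2$ via the invariant and comparing to $r^2$ tells us, by monotonicity, which half of $(\lambda_\lo, \lambda_\hi)$ contains $\lambda^{\star}$; we then absorb the coordinates that have just become decided into $f_\lo$ (those with $\hat{\lambda}_i \le \lambda_\mi$, now saturated and contributing $(l_i - z_i)^2$) or into $f_\hi$ (those with $\hat{\lambda}_i \ge \lambda_\mi$, still contributing $\lambda^2 g_i^2$), and shrink $\mathcal{I}$ accordingly.

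The hard part will be handling the boundary coordinates with $\hat{\lambda}_i = \lambda_\mi$ consistently and confirming the decomposition is preserved across the update. The key observation that makes this work is that at $\lambda = \hat{\lambda}_i$ the two parametrizations of a coordinate's contribution coincide, $\lambda^2 g_i^2 = (z_i - l_i)^2$, so it is harmless that the $\le$/$\ge$ tests in the two branches each claim such a coordinate; the resulting $f_\lo, f_\hi$ still reproduce $\| \tz(\lambda) - z \|^2$ on the new open bracket, and $\mathcal{I}$ becomes exactly the strictly-interior indices. Once $\mathcal{I} = \emptyset$ the invariant makes $\| \tz(\lambda) - z \|^2 = f_\lo + \lambda^2 f_\hi$ an explicit quadratic on the final bracket, whose root $\lambda_\mi = \sqrt{(r^2 - f_\lo)/f_\hi}$ equals $\lambda^{\star}$ (Line~\ref{al:linear-trust-region:final}), so $\tz(\lambda_\mi)$ is returned correctly. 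Finally, since the median splits $\mathcal{I}$ roughly in half and linear-time selection~\cite{BFPRT1973} does $O(|\mathcal{I}|)$ work per iteration, the recurrence $f(n+m) = O(n+m) + f((n+m)/2)$ gives the claimed $O(n+m)$ running time.
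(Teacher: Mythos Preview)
Your proposal is correct and follows essentially the same approach as the paper: reduce to the one-dimensional problem (\ref{eq:lp-trust-region:lambda}) via the parametrization $\tz(\lambda)$, then verify the loop invariants by induction and read off the final root. The paper's own proof is far terser than yours; it simply asserts the invariants are maintained, notes that $|\mathcal{I}|$ halves each pass so the loop terminates, and concludes. Your additions (the KKT derivation of the form $\tz(\lambda)$, the explicit monotonicity argument for $\|\tz(\lambda)-z\|^2$, and the careful treatment of indices with $\hat{\lambda}_i=\lambda_\mi$) are all sound and fill in detail the paper leaves implicit. Your final paragraph on the $O(n+m)$ running time actually belongs to the next theorem in the paper, not this one, but it is also correct.
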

\begin{proof}
If the algorithm returns from line~\ref{al:linear-trust-region:return-l}, (\ref{eq:lp-trust-region:lambda}) is unbounded, and the algorithm returns $\tz(\infty)$.  Otherwise, in each iteration of the while loop (line~\ref{al:linear-trust-region:while}), the algorithm maintains the invariants
\begin{itemize}
    \item $\mathcal{I} = \{i\;:\;\lambda_\lo < \hat{\lambda}_i < \lambda_\hi\}$
    \item For $\lambda_\lo < \lambda < \lambda_\hi$, $\| \tz(\lambda) - z \|^2 = f_\lo + \lambda^2 f_\hi + \sum_{i \in \mathcal{I}} (\tz(\lambda)_i - z_i)^2$
    \item $\|\tz(\lambda_\lo) - z\|^2 \le r^2 \le \|\tz(\lambda_\hi) - z\|^2$
\end{itemize}
The while loop (line~\ref{al:linear-trust-region:while}) is finite, since on each iteration $|\mathcal{I}|$ is reduced by at least a factor of 2.  When the while loop exits (with $\mathcal{I} = \emptyset$), these invariants mean that the final $\lambda_\mi$ computed on line~\ref{al:linear-trust-region:final} optimizes \eqref{eq:lp-trust-region:lambda} and the returned $\tz(\lambda_\mi)$ solves \eqref{eq:lp-trust-region-problem}.
\end{proof}

\begin{theorem}Algorithm~\ref{al:linear-trust-region} runs in $O(m+n)$ time.
\end{theorem}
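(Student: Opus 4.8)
The plan is to bound the initialization cost and then set up and solve a divide-and-conquer recurrence for the while loop. Writing $N = n + m$ for the number of variables, I would first observe that everything before the while loop runs in $O(N)$ time: forming each threshold $\hat{\lambda}_i = (z_i - l_i)/g_i$, checking $\|l - z\| \le r$ on line~\ref{al:linear-trust-region:return-l}, and computing the initial $f_\lo$, $f_\hi$, and index set $\mathcal{I}$ are each a single pass over the $N$ coordinates.

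Next I would show that a single iteration of the while loop (line~\ref{al:linear-trust-region:while}) costs $O(|\mathcal{I}|)$. The median $\lambda_\mi$ on line~\ref{al:linear-trust-region:median} is computed in $O(|\mathcal{I}|)$ time by the linear-time selection algorithm of~\cite{BFPRT1973}; evaluating $f_\mi = \|\tz(\lambda_\mi) - z\|^2$ is a sum over $i \in \mathcal{I}$; and each branch of the \textbf{if} updates $f_\lo$ or $f_\hi$ by a partial sum over $\mathcal{I}$ and filters $\mathcal{I}$ against $\lambda_\mi$, all of which are single passes over the current $\mathcal{I}$. The key structural point is that splitting at the \emph{median}, rather than at an arbitrary pivot, guarantees $|\mathcal{I}|$ drops by at least a factor of two each iteration, since exactly one side of the split is retained (with the eliminated coordinates folded once and for all into $f_\lo$ or $f_\hi$).

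Combining these facts gives the recurrence $f(N) = O(N) + f(\lfloor N/2 \rfloor)$ for the while-loop cost starting from $|\mathcal{I}| = N$, which unrolls to the geometric series $O(N) + O(N/2) + O(N/4) + \cdots = O(N)$. Adding the $O(N)$ initialization and the $O(1)$ final computation of $\lambda_\mi$ on line~\ref{al:linear-trust-region:final} yields the claimed $O(n+m)$ bound.

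There is no genuine obstacle, but the point deserving care is that a coordinate surviving several rounds is reprocessed once in each round it remains in $\mathcal{I}$, so the total work is the \emph{sum} $\sum_j |\mathcal{I}_j|$ over all rounds $j$ rather than a single sweep; it is precisely the guaranteed halving at the median that makes this sum geometric and hence $O(N)$. Replacing the linear-time median of~\cite{BFPRT1973} with, say, a full sort would keep each round correct but inflate the total to $O(N \log N)$, so the linear-time selection is essential to the stated bound.
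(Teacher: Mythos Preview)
Your proposal is correct and follows essentially the same argument as the paper: linear-time initialization, $O(|\mathcal{I}|)$ per iteration via the linear-time median of~\cite{BFPRT1973}, halving of $|\mathcal{I}|$ each round, and summing the resulting geometric series to get $O(m+n)$. Your additional remarks on why the halving is essential and why sorting would degrade the bound are sound elaborations but do not depart from the paper's approach.
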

\begin{proof}
The work outside the while loop (line~\ref{al:linear-trust-region:while}) is clearly $O(m+n)$.  In each pass through the while loop, the median (line~\ref{al:linear-trust-region:median}) can be found in $O(|\mathcal{I}|)$ time~(\cite{BFPRT1973}), and the rest of the loop also takes $O(|\mathcal{I}|)$ time.  Since at least half of the elements of $\mathcal{I}$ are removed in each iteration, and initially $|\mathcal{I}| \le m+n$, the total time in the while loop is at most $O(\sum_{i=0}^\infty \frac{m+n}{2^i}) = O(m+n)$ by the formula for the sum of an infinite geometric series.
\end{proof}

\pagebreak 

\section{Numerical results for restarted EGM and ADMM}\label{sec:num-egm-and-admm}

\blue{
In this section, we repeat the numerical experiments of Section \ref{sec:experimental-results} with EGM and ADMM instead of PDHG.
We tune the primal weight for EGM and the step size $\eta$ for ADMM using the same procedure as with PDHG (see Section \ref{sec:experimental-results}).

\begin{figure}
    \centering
    \includegraphics[height=150pt]{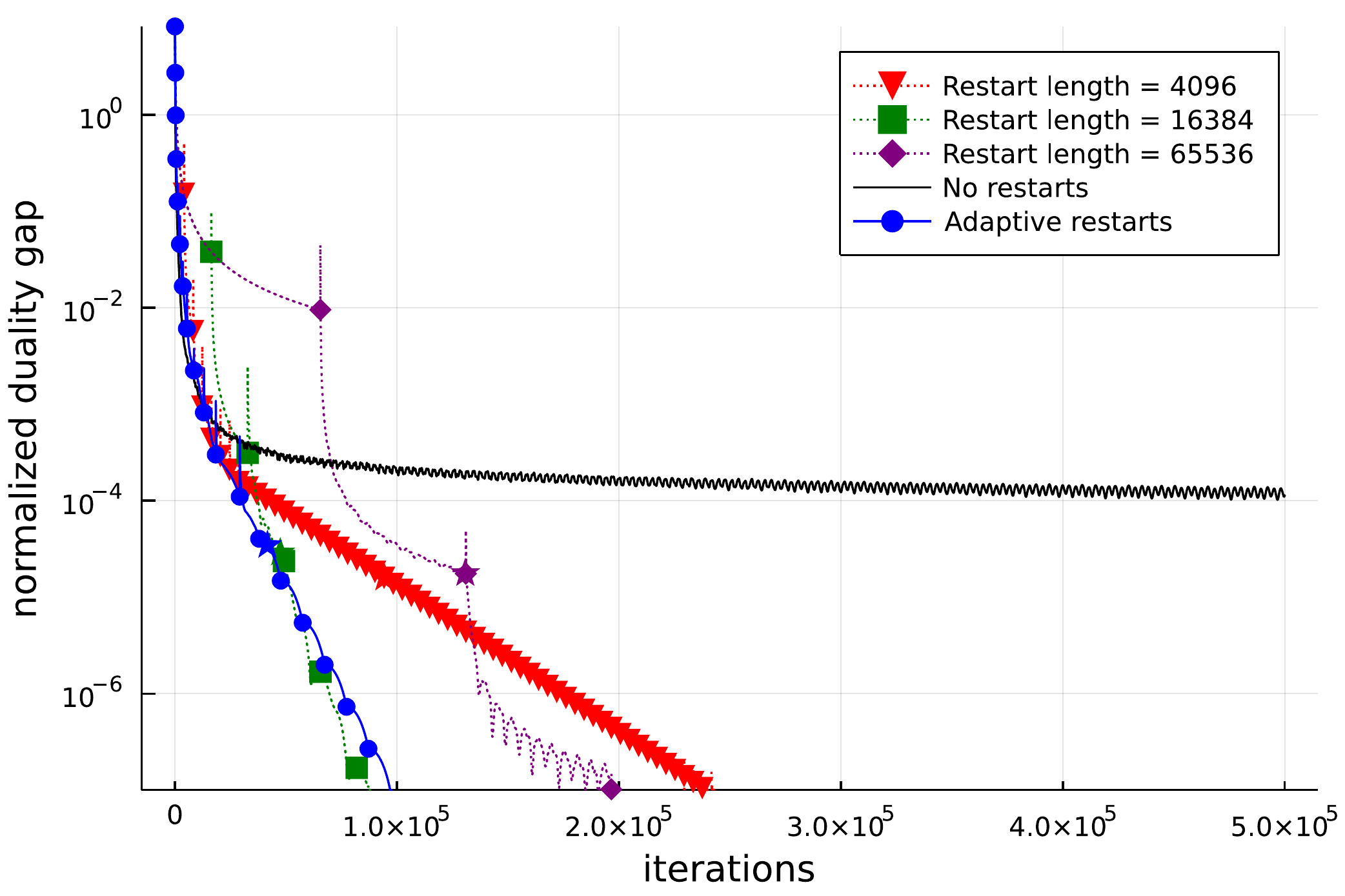}
    \includegraphics[height=150pt]{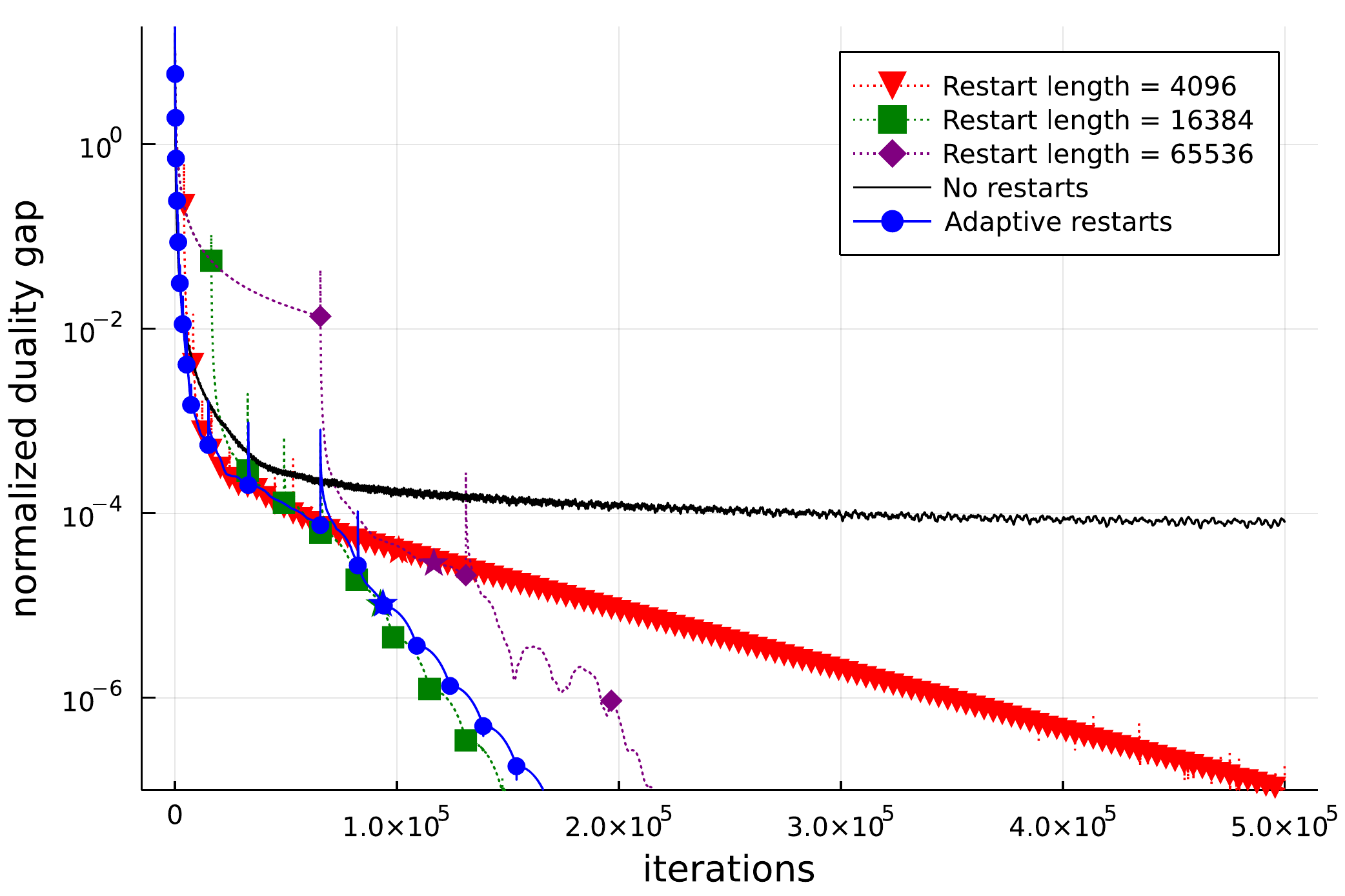}
    \includegraphics[height=150pt]{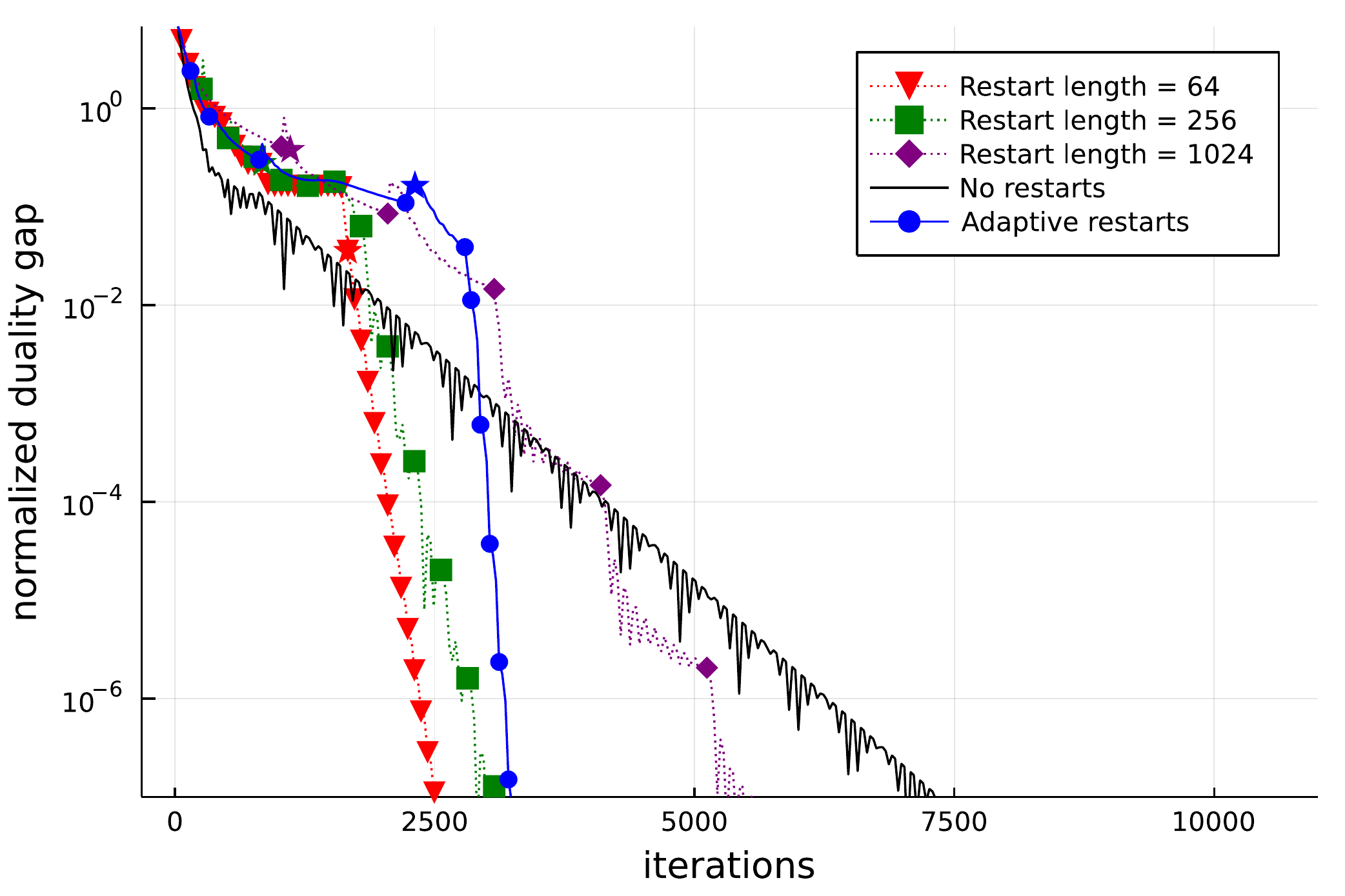}
    \includegraphics[height=150pt]{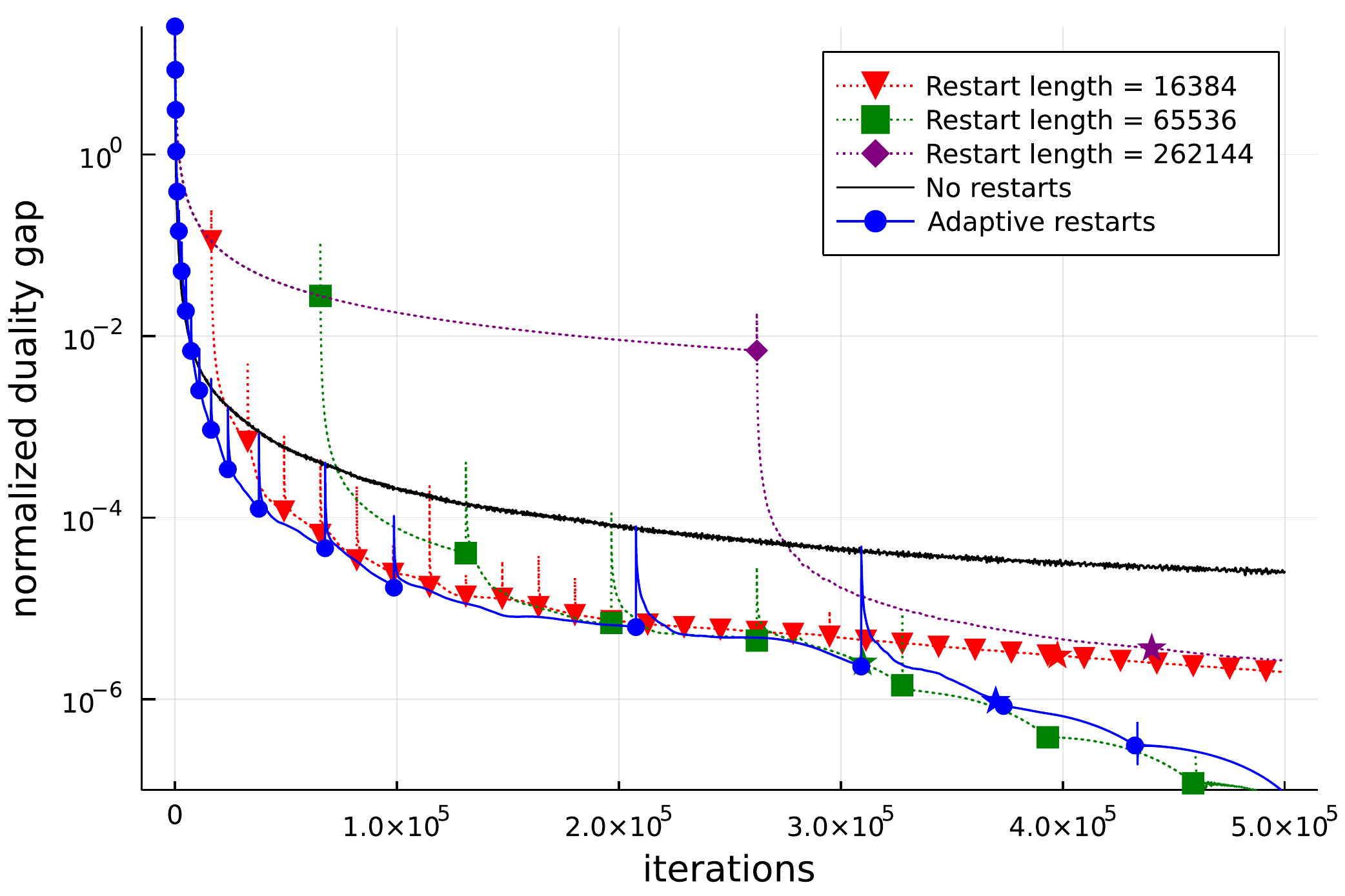} \\
    \caption{Plots show normalized duality gap (in $\log$ scale) versus number of iterations for (restarted) EGM.  Results from left to right, top to bottom for qap10, qap15, nug08-3rd, and nug20. Each plot compares no restarts, adaptive restarts, and the three best fixed restart lengths.
    For the restart schemes we evaluate the normalized duality gap at the average, for no restarts we evaluate it at the last iterate as the average performs much worse.
    A star indicates the last iteration that the active set changed before the iteration limit was reached.
    Series markers indicate when restarts occur.}
    \label{fig:egm-experiments}
\end{figure}

Figure \ref{fig:egm-experiments} (EGM) is almost identical to Figure \ref{fig:pdhg-experiments} (PDHG), which verifies again the improved performance of restarted algorithms. Furthermore, note that one iteration of EGM is about twice as expensive as a PDHG iteration, because EGM requires four matrix-vector multiplications per iteration, while PDHG requires two matrix-vector multiplications.

Indeed, to see the difference between  EGM and  PDHG 
requires a close examination of Table~\ref{table:kkt-error}. Recall that termination criteria is only checked every 30 iterations, which causes the iteration counts to be identical in several instances.
}

\blue{
\begin{figure}
    \centering
    \includegraphics[height=150pt]{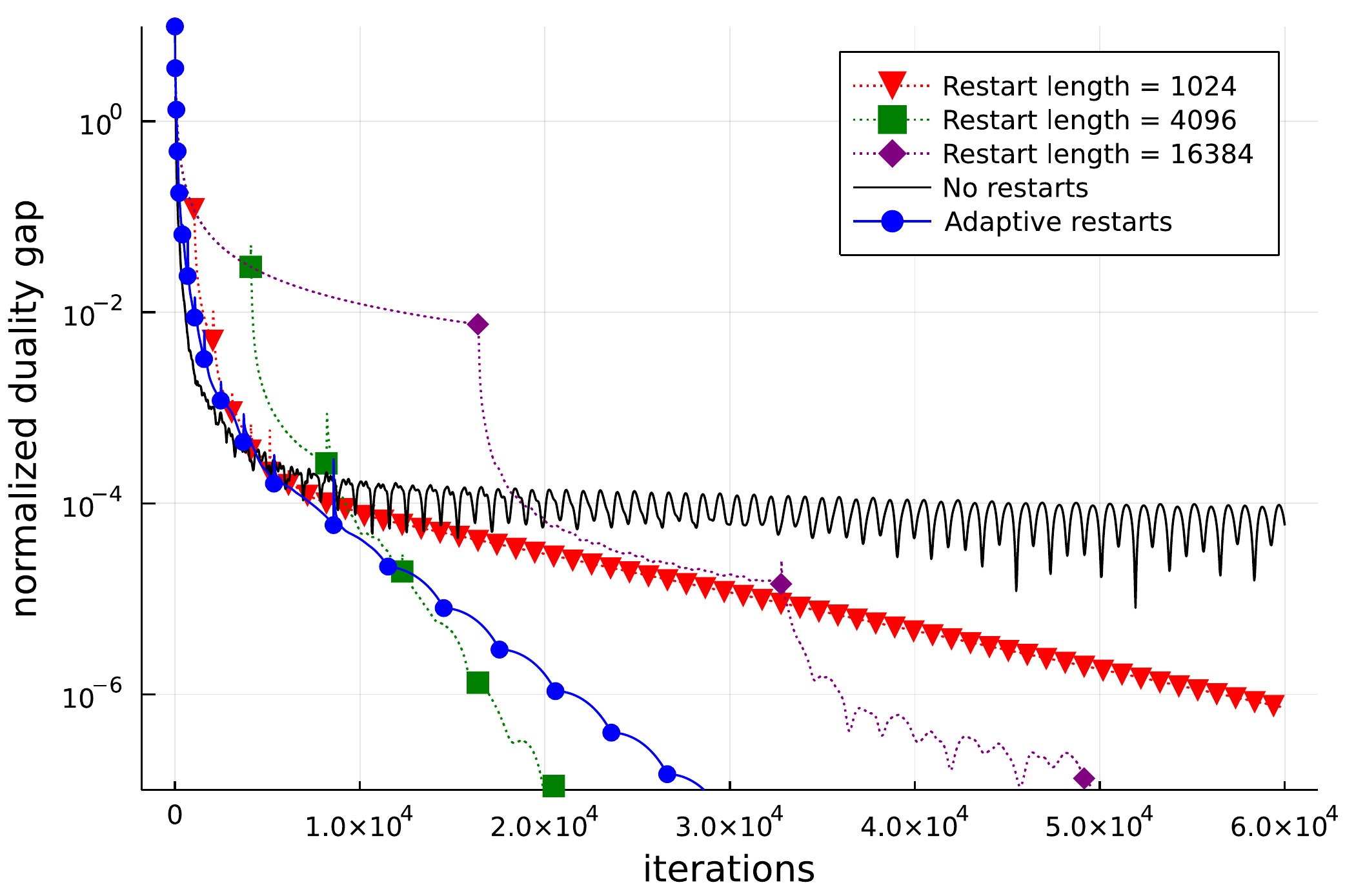}
    \includegraphics[height=150pt]{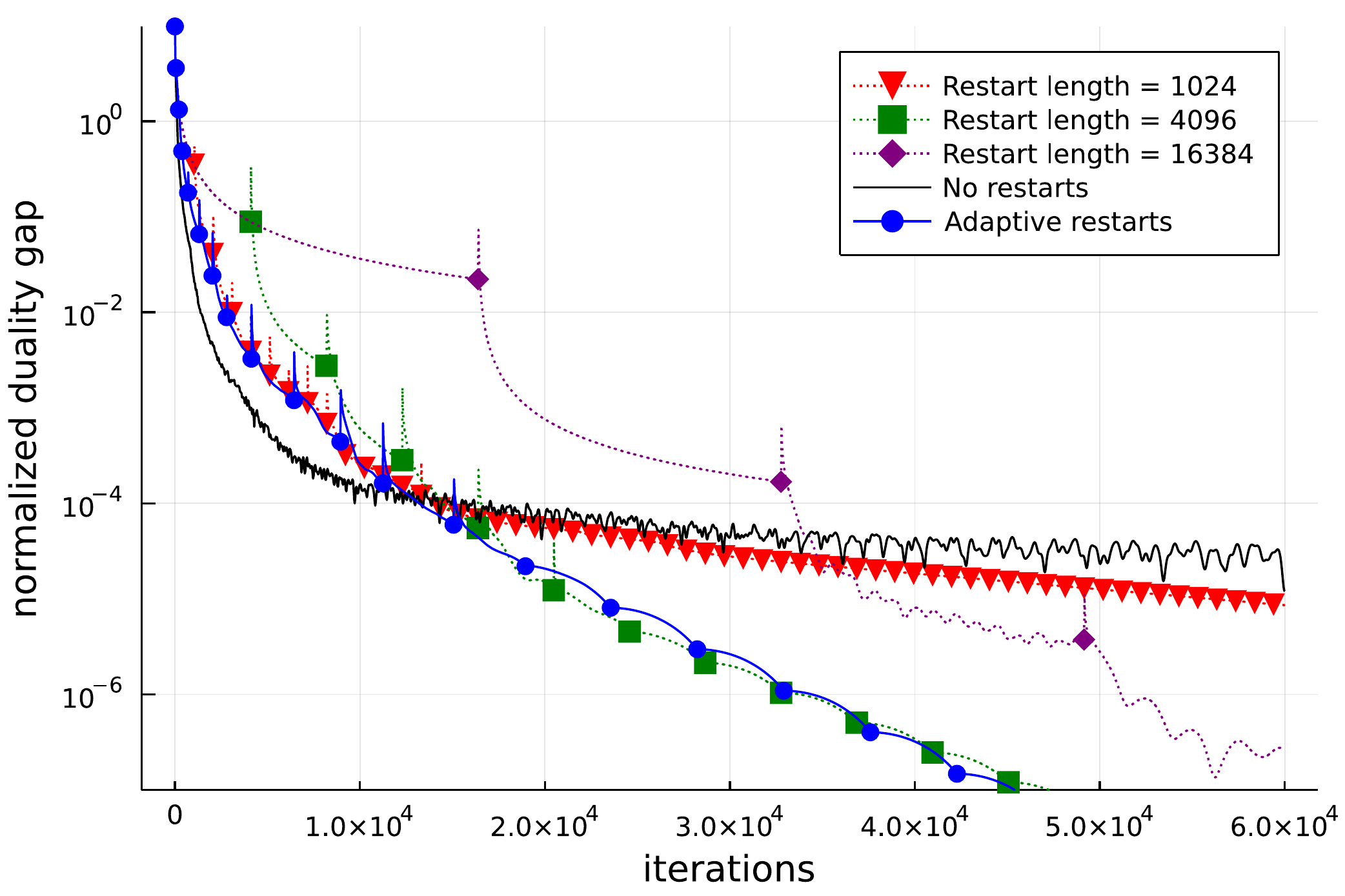}
    \includegraphics[height=150pt]{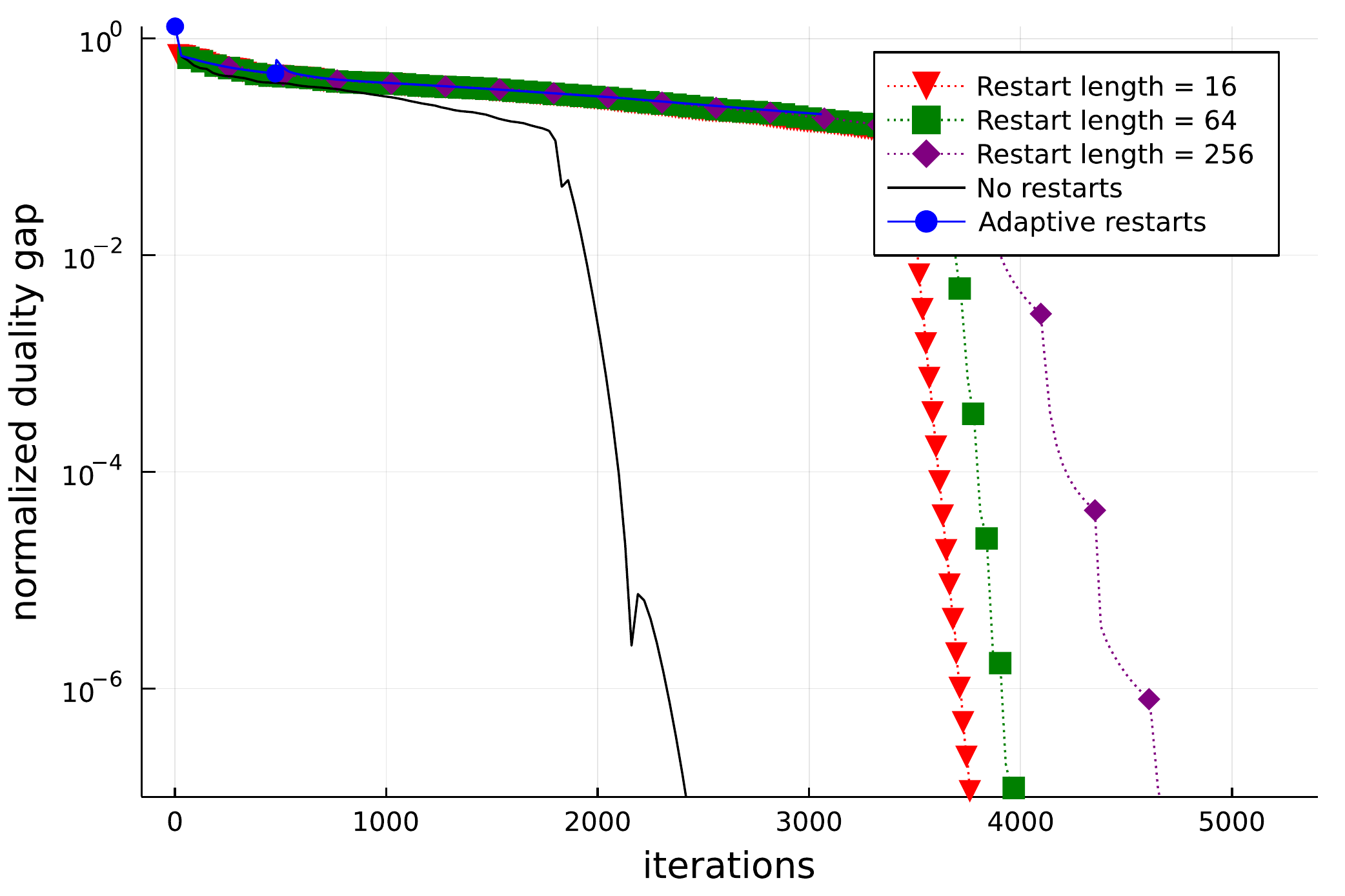}
    \includegraphics[height=150pt]{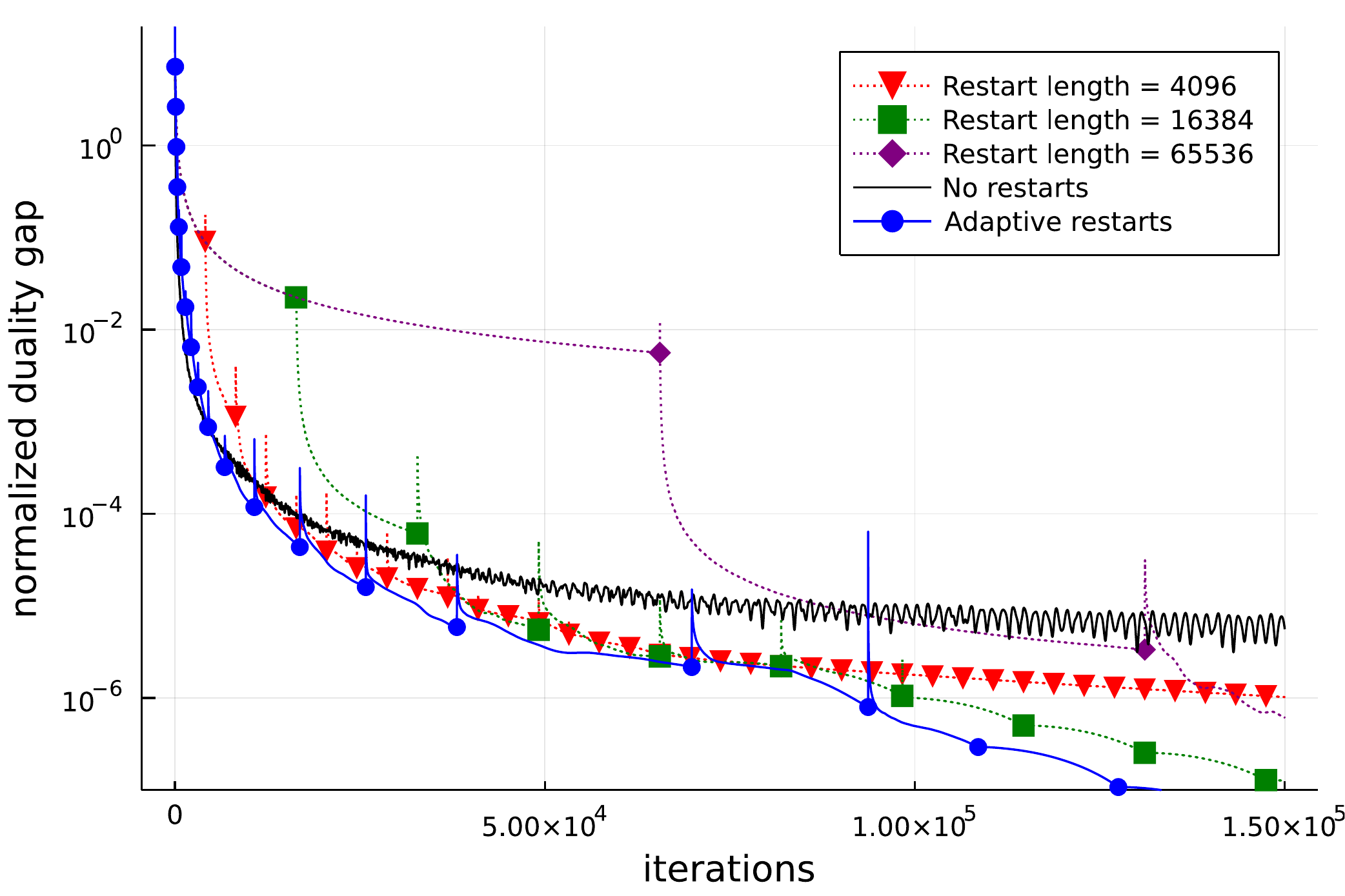} \\
    \caption{Plots show normalized duality gap (in $\log$ scale) versus number of iterations for (restarted) ADMM.  Results from left to right, top to bottom for qap10, qap15, nug08-3rd, and nug20. Each plot compares no restarts, adaptive restarts, and the three best fixed restart lengths.
    For the restart schemes we evaluate the normalized duality gap at the average, for no restarts we evaluate it at the last iterate as the average performs much worse.
    A star indicates the last iteration that the active set changed before the iteration limit was reached.
    Series markers indicate when restarts occur.}
    \label{fig:admm-experiments}
\end{figure}

Figure \ref{fig:admm-experiments} shows the performance of restarted ADMM for the same problems, which verifies again the improved performance of restarted algorithms. We observe that
restarts do appear
to make performance slower for nug08-3rd but this appears to be a very easy problem --- the number of iterations for all methods is very low.
}

\end{document}